\numberwithin{equation}{section} \hyphenation{semi-stable}
\font\tengothic=eufm10 scaled\magstep 1 \font\sevengothic=eufm7
\def\goth#1{{\fam\gothicfam #1}}
\newtheorem{theorem}{Theorem}[section]
\newtheorem{lemma}[theorem]{Lemma}
\newtheorem{proposition}[theorem]{Proposition}
\newtheorem{corollary}[theorem]{Corollary}
\theoremstyle{definition}
\newtheorem{definition}[theorem]{Definition} % \theoremstyle{remark}
\newtheorem{remark}[theorem]{Remark}
\newtheorem{example}[theorem]{Example}
\newcommand{\codim}{\operatorname{codim}}
\newcommand{\coker}{\operatorname{coker}}
\newcommand{\aut}{\operatorname{aut}}
\newcommand{\Hom}{\operatorname{Hom}}
\newcommand{\ext}{\operatorname{ext}}
\newcommand{\Ext}{\operatorname{Ext}}
\newcommand{\depth}{\operatorname{depth}}
\newcommand{\im}{\operatorname{im}}
\newcommand{\Hi}{\operatorname{Hilb}}
\DeclareMathOperator{\GradAlg}{GradAlg}
\newcommand{\Proj}{\operatorname{Proj}}
\newcommand{\Spec}{\operatorname{Spec}}
\newcommand{\proj}[1]
{ \mathchoice
            { {\mathbb P}^{#1} }
            { {\mathbb P}^{#1} }
            { {\mathbb P}^{#1} }
            { {\mathbb P}^{#1} }
          }
\newcommand{\Tor}{\operatorname{Tor}}
\newcommand{\cA}{{\mathcal A}}
\newcommand{\cB}{{\mathcal B}}
\newcommand{\cI}{{\mathcal I}}
\newcommand{\cC}{{\mathcal C}}
\newcommand{\cF}{{\mathcal F}}
\newcommand{\cG}{{\mathcal G}}
\newcommand{\cO}{{\mathcal O}}
\newcommand {\QQ}{\mathbb{Q}}
\newcommand {\PP}{\mathbb{P}}
\newcommand {\ZZ}{\mathbb{Z}}
\newcommand {\VV}{\mathbb{V}}
\newcommand {\ra}{\longrightarrow}
\begin{document}
\title[]{Families of artinian and low dimensional determinantal rings.}

\author[Jan O.\ Kleppe]{Jan O.\ Kleppe} 

\address{Faculty of Technology, Art and Design, Oslo and Akershus University
  College of Applied Sciences, P.O. Box 4, St.\! Olavs Plass, NO-0130 Oslo,
  Norway} \email{JanOddvar.Kleppe@hioa.no}
\date{}
%\subjclass{Primary 14M12, 14C05, 13D10; Secondary 14H10, 14J10}

%%%%%%%%%%%%%%%%%%%%%%%%%%%%%%%%
 \begin{abstract}
   Let ${\rm GradAlg}(H)$ be the scheme parameterizing graded quotients of $R
   =k[x_0, \dots ,x_n]$ with Hilbert function $H$ (it is a subscheme of the
   Hilbert scheme of $\PP^n$ if we restrict to quotients of positive
   dimension, see definition below). A graded quotient $A=R/I$ of codimension
   $c$ is called {\it standard determinantal} if the ideal $I$ can be
   generated by the $t \times t$ minors of a homogeneous $t \times (t+c-1)$
   matrix $(f_{ij})$. Given integers $a_0\le a_1\le ...\le a_{t+c-2}$ and
   $b_1\le ...\le b_t$, we denote by $W_s(\underline{b};\underline{a}) \subset
   {\rm GradAlg}(H)$ the stratum of determinantal rings where $f_{ij} \in R$
   are homogeneous of degrees $a_j-b_i$.   \\[2mm]
   In this paper we extend previous results on the dimension and codimension
   of $W_s(\underline{b};\underline{a})$ in ${\rm GradAlg}(H)$ to {\it
     artinian determinantal rings}, and we show that ${\rm GradAlg}(H)$ is
   generically smooth along $W_s(\underline{b};\underline{a})$ under some
   assumptions. For zero and one dimensional determinantal schemes we
   generalize earlier results on these questions. As a consequence we get that
   the {\it general} element of a component $W$ of the Hilbert scheme of
   $\PP^n$ is {\it glicci} provided $W$ contains a standard determinantal
   scheme satisfying some conditions. We also show how certain {\it ghost
     terms} disappear under deformation while other ghost terms remain and are
   present in the minimal
   resolution of a general element of $\GradAlg(H)$. \\[-2mm]

 \noindent {\bf Keywords}. Determinantal rings, Deformations, Hilbert
  schemes, Artinian rings. \\[-3mm]

  \noindent {\bf AMS Subject Classification.} 13C40, 14C05, 13D10 
  (Primary), 14M12, 13E10, 13D02, 13D07, 13A02, 13H10 (Secondary).

\end{abstract}
%%%%%%%%%%%%%%%%%%%%%%%%%%%%%%%
\maketitle
%\tableofcontents

%%%%%%%%%%%%%%%%%%%%%%%%%%%%%%%%%%%%%%%%%%%%%%%%%

\section{Introduction} \label{intro} The main goal of this paper is to
generalize previous results on maximal families of determinantal schemes,
notable to cover maximal families of {\it artinian} determinantal
$k$-algebras. Recall that a $k$-algebra $A \simeq R/I$, $R=k[x_0, \dots
,x_n]$, of codimension $c$ is called {\it determinantal} if the ideal $I$ can
be generated by the $r \times r$ minors of a homogeneous $p \times q$ matrix
$(f_{ij})$ with $c=(p-r+1)(q-r+1)$. $A$ is called {\it standard (resp. good)
  determinantal}. if $r={\rm \min}(p,q)$ (resp. $r={\rm \min}(p,q)$ and $A$ is
a generic complete intersection of $R$).

\vskip 3 pt Let $ \GradAlg(H)$ be the ``Hilbert scheme of constant Hilbert
function'', i.e. the scheme parameterizing graded quotients $A$ of $R$ of
$\depth A \ge \min(1,\dim A)$ and with Hilbert function $H$. Given integers
$a_0\le a_1\le ...\le a_{t+c-2}$ and $b_1\le ...\le b_t$, we denote by
$W(\underline{b};\underline{a})$ (resp. $W_s(\underline{b};\underline{a}))$
the stratum in $ \GradAlg(H)$ consisting of good (resp. standard)
determinantal $k$-algebras %as above
where $f_{ij}$ are homogeneous polynomials of degree $a_j-b_i$. Then
$W_s(\underline{b};\underline{a}) $ is irreducible,
$W_s(\underline{b};\underline{a}) \ne \emptyset$ if $a_{i-1} > b_i$ for $1 \le
i \le t$, and the closures $ \overline {W_s(\underline{b};\underline{a})}$ and
$ \overline {W(\underline{b};\underline{a})}$ are equal if $n \ge
c$ (Proposition~\ref{WWs}). \vskip 2 pt

In this paper we focus on the following problems. \vskip 1 pt (1) Determine
when $ \overline
{W_s(\underline{b};\underline{a})}$ % the closure of
                                    % $W_s(\underline{b};\underline{a})$
is an irreducible component of $ \GradAlg(H)$. \vskip 1 pt (2) Find the
codimension of $W_s(\underline{b};\underline{a})$ in $ \GradAlg(H)$ if its
closure is not a component. \vskip 1 pt (3) Determine when $ \GradAlg(H)$ is
generically smooth along $W_s(\underline{b};\underline{a})$.

\vskip 5 pt These questions have been considered in several papers, and in
\cite{K10} we solve all these problems completely provided $n-c >1$ and $a_0 >
b_t$. In this case the closure of $W_s(\underline{b};\underline{a})$ is a
generically smooth irreducible component of the usual Hilbert scheme $ {\rm
  Hilb}(\PP^{n})$, as well as of $ \GradAlg(H)$ (see
Theorem~\ref{Amodulethm5}), i.e. \cite[Conjecture 4.2]{KM09} holds, and $\dim
W_s(\underline{b};\underline{a})$ is determined (equal to $\lambda$ in
Theorem~\ref{Amodulethm3}, whence \cite[Conjecture 4.1]{KM09} holds for $n-c >
0$), see also \cite{FFnew} for a somewhat different approach to these
problems. So we only need to consider the case $n+1-c \in \{0,1,2\}$, and in
\cite{K09} we solve problems (1)-(3) for determinantal schemes with %for which
$n+1-c \in \{1,2\}$ under certain assumptions. In this paper we concentrate on
{\it artinian} determinantal $k$-algebras ($n+1-c = 0$), %since
%we have not treated this case previously, 
and we prove the main Theorems~\ref{Amodulethm3} and \ref{Amodulethm5} under
conditions that
allow $c$ to be $n+1$. In addition we prove a new result
(Theorem~\ref{compthmvar}, extending Theorem~\ref{Amodulethm5}) which applies
when $\dim W_s(\underline{b};\underline{a}) \ne \lambda$. This theorem implies
that the general element of an irreducible component $W$ of the Hilbert scheme
of $\PP^n$ is {\it glicci} (in the Gorenstein liaison class of a complete
intersection) provided $W$ contains a standard determinantal scheme satisfying
some conditions (Corollary~\ref{glicci}). For an introduction to
glicciness, see \cite{KMMNP}, and see \cite{EHS} and its references for
further developments. Finally, in Sec. 6, we generalize and complete
several results of \cite{K09} for families of determinantal schemes of
dimension 0 or 1, some of which occurred in
%a paper which extended and corrected results of 
\cite{KMMNP,KM}, and we slightly extend a result of \cite{KM09}.

 We have used two different strategies to attack the problems (1) to (3).
Indeed in \cite{KMMNP,KM,K09,KM09} we successively deleted columns of the $t
\times (t+c-1)$ matrix $\cA$ associated to a determinantal scheme
$X:=\Proj(A)$ to get a nest (``flag'') of closed subschemes $X =X_c \subset
X_{c-1} \subset ... \subset X_2 \subset {\PP^{n} }$ and we proved our results
inductively by considering the smoothness of the Hilbert flag scheme of pairs
and its natural projections to Hilbert schemes. On the other hand, in
\cite{K10} we compared deformations of $A$ with deformation of the cokernel
$M$ of the map $\oplus_{j=0}^{t+c-2}R(-a_j) \to \oplus_{i=1}^tR(-b_i)$ induced
by $\cA$. This latter approach turned out to be quite successful, and it
indeed solved problem (1)-(3) for $n-c > 1$. It is this approach that we
generalize to the artinian case, only introducing an extra assumption (\!
$_0\!\Hom_A(M,M) \simeq k$) in the theorems. In fact we show that the main
results of \cite{K10} hold, whence partially solving problems (1) to (3) also
for $n=c-1$.

For $c=2$ all assumptions of the theorems are fulfilled. We even replace $R$
by any Cohen-Macaulay quotient $\overline R$ of $R$ and solve the problems
above for determinantal quotients of codimension $c=2$ of $\overline R$
(Theorem~\ref{elling}). For $c > 2$ we need to verify that certain ${\rm
  Ext^i}$-groups vanish to apply our results when $n+1-c \in \{0,1,2\}$. In
this paper we accomplish this by using Macaulay 2 (\cite{Mac}). We give many
examples, supported by Macaulay 2 computations, to illustrate the theorems in
the artinian case.

In Sec.\;5 we consider ghost terms, i.e.\;common free summands in consecutive
terms of the minimal $R$-free resolution of $A$, and we show that some of them
disappear, others remain unchanged under suitable generizations (deformations
to a more general algebra) in $\GradAlg(H)$. If $A$ is general in
$\GradAlg(H)$ and the assumptions of Theorem~\ref{compthmvar} are fulfilled
(e.g. $\dim A \ge 3$ and $a_0\ge b_t$), one may easily describe all ghost
terms in its minimal free resolution while it seems hard to get improved
results when $\dim A \le 2$ (Proposition~\ref{ghost}, Remark~\ref{ghdim2}),
although examples indicate a possible connection (Remark~\ref{ghext2}).

In the proofs we use the exactness of the Buchsbaum-Rim complex (\cite{BR,
eise}) and the 5-term exact sequence associated to the spectral sequence
$E_2^{p,q}:= \Ext_A^p(\Tor_q^R(A,M),M) \ \Rightarrow \ \Ext_R^{p+q}(M,M)$
which, in view of Fitting's lemma, plays
a surprisingly important role in comparing the deformation functors of $M$ and
$A$.

\vskip 4mm

{\bf Notation:} Throughout $\PP:=\PP^n$ is the projective $n$-space over an
algebraically closed field $k$ (except in %some remarks and
Theorem~\ref{elling} where we allow $k \ne \overline k$), $R=k[x_0, x_1, \dots ,x_n]$ is a polynomial ring and
$\goth m= (x_0, \dots ,x_n)$.
If $X \subset Y$ are closed subschemes of $\PP^n$, we denote by ${\mathcal
  I}_{X/Y}$ (resp. ${\mathcal N}_{X/Y}$) the ideal (resp. normal) sheaf of $X$
in $ Y $, and we omit $/Y$ if $Y=\PP^n$. Let $ I_X=H^0_{*}({\mathcal I}_X)
\subset R$ be the saturated homogeneous ideal of $X \subset \PP^n$. % and we let
%$\omega_X ={\mathcal E}xt^c_{{\mathcal O}_{\PP^n}} ({\mathcal O}_X,{\mathcal
%  O}_{\PP^n})(-n-1)$. 
When {\it we write} $X=\Proj(A)$ {\it we always let} $A:=R/I_X$ and
$K_A=\Ext^c_R (A,R)(-n-1)$ be the canonical module of $A$ or $X$ where
$c=\codim_{\PP} X:=n-\dim X$. Note that by the codimension, ${\rm codim}_Y X$,
of an irreducible $X$ in a not necessarily equidimensional scheme $Y$ we mean
$\dim {\mathcal O}_{Y,x} -\dim X$, where $x$ is a general $k$-point of $X$. We
let ${}_v\! \Ext(F,G)$ be the degree-$v$ part of the graded module
$\Ext(F,G)$. Moreover we denote by $\hom(\cF,\cG)=\dim_k\Hom(\cF,\cG)$
the dimension of the group of morphisms between coherent $\cO_X$-modules and
we use small letters for the $k$-vector space dimension of similar groups.
 %%%%%%%%%%%%%%%%%%%%%%%%%%%%%%%%%%%%%%%%%%%%%%%%%%%%%%%%%%%%%%%%%%%%%%%%%%%%%

\section{Preliminaries}

\subsection{Hilbert schemes and Hilbert function strata}

We denote the Hilbert scheme with the Hilbert polynomial $p \in \QQ[s]$ by
$\Hi ^p(\PP^n)$, cf.\! \cite{G} for existence and \cite{L} for the local
theory. %, and $(X) \in \Hi ^p(\PP^n)$ for the point
%which corresponds to the subscheme $X\subset \PP^n$. 
Similarly $\GradAlg^H(R)$, or $\Hi ^H(\PP^n)$ when $\dim A >0$, is the
representing object of the functor that parameterizes flat families of graded
quotients $A$ of $R$ of $\depth_{\goth m} A \ge \min(1,\dim A)$ and with
Hilbert function $H$, $H(i)= \dim A_i$ (\cite{K98,K04,HS}). We allow calling
it ``the postulation Hilbert scheme'' (\cite[\S 1.1]{K07}) even though it may
be different from the parameter space studied by Gotzmann and 
Iarrobino %and others
(\cite{Go,IK}) who study the ``same'' scheme with the reduced scheme
structure.
%(ours may be non-reduced and is equivalent to the Hilbert scheme of constant
%postulation considered in \cite{MP} in the curve case. They are both special
%cases of the multigraded Hilbert scheme of Haiman and Sturmfels \cite{HS}).
We let $(A)$, or $(X)$ where $X= \Proj(A)$, denote the point of $\GradAlg(H):=
\GradAlg^H(R)$ that corresponds to $A$. Note that if $\depth_{\goth m}A \geq
1$ and $\ _0\!\Hom_R (I_X,H^{1}_{\goth m}(A)) = 0$, then
\begin{equation}  \label{Grad}
  \GradAlg(H) \simeq \Hi ^p(\PP^n) \ \ \ {\rm at} \ \ \ (X)\, ,
\end{equation} 
and hence we have an isomorphism $ _0\!\Hom (I_{X},A) \simeq \ H^0({\mathcal
  N}_X)$ of their tangent spaces (cf. \cite{elli} for the case $\depth_{\goth
  m}A \geq 2$, and (9) of \cite{K04} for the general case). If $X$ is
generically a complete intersection, then $ _0\!\Ext^1_A(I_{X}/I_{X}^2,A)$ is
an obstruction space of $ \GradAlg(H)$ at $(A)$, and hence of $ \Hi ^p(\PP^n)$
at $(X)$ if \eqref{Grad} holds. Indeed for any quotient $A=R/I$, we may define
${\rm H}_2(R,A,A):= \ker(S_2(I) \to I^2)$ where $S_2(I)$ is the $2^{\rm nd}$
symmetric power of $I$, cf.\! \cite{AND}, p.106. Then $ \GradAlg(H)$ is smooth
at $(A)$ if
\begin{equation}  \label{GradGenCI}
 _0\!\Ext^1_A(I/I^2,A)=0 \quad {\rm and} \quad
 _0\!\Hom_A({\rm H}_2(R,A,A),A)=0 \ , 
\end{equation} 
and observe that the latter group vanishes if
A is generically a complete intersection of $R$ \cite[\S 1.1]{K04}.
%When we simply write
% $\Hi (\PP^n)$, we interpret it as the Hilbert scheme (resp. postulation
% Hilbert scheme) if $ n-c >0$ (resp. $ n-c =0$).
By definition $X$ (resp. $A$) is called {\it unobstructed} if $\Hi ^p(\PP^n)$
(resp. $\GradAlg(H)$) is smooth at $(X)$ (resp.
$(A)$). %Note that we called $X$ {\it
% H-unobstructed} in \cite{KMMNP} if $A$ was unobstructed.

We say that $X$ is {\it general} in some irreducible subset $ W \subset
\Hi^p(\PP^n)$ if $(X)$ belongs to a sufficiently small open subset $U$ of $W$
such that any $(X)$ in $U$ has all the openness properties that we want to
require. We define "$A$ general in  $\GradAlg(H)$" similarly.

\subsection{Determinantal rings and schemes}

We mainly maintain the notions and notations from \cite{K09,K10}, but we need
to extend some results to artinian determinantal $k$-algebras. For a more
general background of determinantal rings and schemes, see \cite{b-v,eise, BH,
  rmmr}. 

Indeed let
\begin{equation}\label{gradedmorfismo} \varphi:F=\bigoplus
  _{i=1}^tR(b_i)\longrightarrow G:=\bigoplus_{j=0}^{t+c-2}R(a_j)
\end{equation}
be a graded morphism of free $R$-modules, let
$\cA=(f_{ij})_{i=1,...t}^{j=0,...,t+c-2}$, $\deg f_{ij}=a_j-b_{i}$, be a
%$t\times (t+c-1)$ 
matrix which represents the dual $\varphi^*:=\Hom_R(\varphi,R)$ and let
$I(\cA):=I_t(\cA)$ be the ideal of $R$ generated by the maximal minors of
$\cA$. We {\it always suppose} $t\ge 2$ (unless explicitly allowing $t=1$) and
\begin{equation} \label{ba}
 c\ge 2,\ \ b_1 \le ... \le b_t \ \ \  {\rm and} \ \ \ a_0 \le
 a_1\le ... \le a_{t+c-2}.
\end{equation}
A codimension $c$ quotient $A=R/I$ (resp. subscheme $X\subset \PP^{n}$ if $n
\ge c$) is called \emph{standard
  determinantal} %, or just {\sf determinantal},
if $I:=I(\cA)$ (resp. $I_X:=I(\cA)$) for some homogeneous $t\times (t+c-1)$
matrix $\cA$ as
above. %in which case we have $a_{i-1}-b_i> 0$ for $i=1,...,t$.
They are \emph{good determinantal} if additionally, the ideal $I_{t-1}(\cA)$
of submaximal minors has at least codimension $c+1$ in $R$, i.e.\;$R/I(\cA)$
is a generic complete intersection of $R$. If $t=1$, standard, as well as good
determinantal are complete intersections of $ R$.

Given integers $b_i$ and $a_j$ satisfying \eqref{ba} we let
$W_s(\underline{b};\underline{a};R)$ (resp.
$W(\underline{b};\underline{a};R)$) be the stratum in
$\GradAlg(H):=\GradAlg^H(R) $ consisting of standard (resp. good)
determinantal quotients, cf.\;\eqref{Grad}, and we usually omit $R$ in this
notation. Note that we do not require $\cA$ to be minimal (i.e. $f_{ij}=0$
when $b_{i}=a_{j}$) for $R/I_{t}(\cA)$ to belong to
$W_s(\underline{b};\underline{a}):=W_s(\underline{b};\underline{a};R)$. In
examples, however, we mainly consider determinantal rings with positive degree
matrix (i.e.\! for every $i,j$, $b_{i}<a_{j}$, and recall that $a_{j}-b_{i}=1$
is called the linear case). %Still
\begin{proposition} \label{WWs}  The closure of \!
  $W_s(\underline{b};\underline{a})$ in  $\GradAlg(H)$ is irreducible, and
  \begin{equation*} \label{WWs} \ W_s(\underline{b};\underline{a}) \ne
    \emptyset \ \ \Leftrightarrow \ \ a_{i-1} \ge b_i \ \ {\rm for \ all} \ i
    \ {\rm and } \ a_{i-1} > b_i \ \ {\rm for \ some } \ i \ .
\end{equation*}
Moreover if \ $n-c \ge 0$, then $\overline {W_s(\underline{b};\underline{a})}
= \overline {W(\underline{b};\underline{a})}$. And if \ $n-c=-1$, then every
$A=R/I$ of ${W(\underline{b};\underline{a})}$ satisfies $I_{t-1}(\cA)=R$, i.e.
is a complete intersection (c.i.) of $R$.
\end{proposition}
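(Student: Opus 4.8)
The plan is to prove the three assertions of Proposition~\ref{WWs} essentially independently, relying on the structural theory of standard determinantal ideals and the Eagon--Northcott / Buchsbaum--Rim complex. For the non-emptiness criterion together with irreducibility, I would argue as follows. The stratum $W_s(\underline b;\underline a)$ is the image, under the map sending a matrix $\cA$ to the point $(R/I_t(\cA))$, of the open subset of the affine space of matrices $(f_{ij})$ with $\deg f_{ij}=a_j-b_i$ on which the maximal minors generate an ideal of the expected codimension $c$; this affine space of homogeneous entries is irreducible, and a dense-open image of an irreducible set is irreducible, so the closure $\overline{W_s(\underline b;\underline a)}$ is irreducible. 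For non-emptiness one checks that the locus where $\operatorname{codim} I_t(\cA)=c$ is nonempty (equivalently, that a sufficiently general matrix of the prescribed degrees has maximal minors of codimension $c$): this is possible exactly when each entry position $(i,i-1)$ on the ``diagonal'' can be made to contain a nonzero form, i.e.\ when $a_{i-1}\ge b_i$ for all $i$, and when at least one such entry is of positive degree (else all entries on and above the diagonal are constants and the generic minor is a unit, forcing $I_t(\cA)=R$), i.e.\ $a_{i-1}>b_i$ for some $i$. The converse — that these inequalities are also necessary — is the contrapositive of the same degree bookkeeping: if $a_{i-1}<b_i$ for some $i$ then, after reordering, a whole block of the matrix vanishes identically and $\operatorname{codim}I_t(\cA)<c$, so no point of $\GradAlg(H)$ arises.

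For the equality $\overline{W_s(\underline b;\underline a)}=\overline{W(\underline b;\underline a)}$ when $n-c\ge 0$, I would use that $W(\underline b;\underline a)\subseteq W_s(\underline b;\underline a)$ always, so $\overline{W(\underline b;\underline a)}\subseteq\overline{W_s(\underline b;\underline a)}$; for the reverse inclusion it suffices to show $W(\underline b;\underline a)$ is dense in $W_s(\underline b;\underline a)$, i.e.\ that a general standard determinantal $A$ with these invariants is in fact good, meaning $\operatorname{codim}I_{t-1}(\cA)\ge c+1$. This is a genericity statement about minors: the locus of matrices for which the submaximal minors drop to codimension $\le c$ is closed and proper inside the matrix space, provided there is room for $I_{t-1}(\cA)$ to have codimension $c+1$ in $R$ — which needs $\dim R=n+1\ge c+1$, i.e.\ exactly $n-c\ge 0$. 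So a general $\cA$ gives a good determinantal quotient, and the closures coincide.

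Finally, the case $n-c=-1$: here $c=n+1=\dim R$, so a codimension-$c$ quotient $A=R/I$ is artinian, and if moreover $A\in W(\underline b;\underline a)$ then $A$ is a generic complete intersection; but an artinian generic complete intersection is a complete intersection, since ``generic c.i.'' forces $I_{t-1}(\cA)$ to have codimension $\ge c+1=n+2>\dim R$, which is impossible unless $I_{t-1}(\cA)=R$, and $I_{t-1}(\cA)=R$ together with the Hilbert--Burch-type structure of standard determinantal ideals forces $t=1$, i.e.\ $I$ is generated by a regular sequence of $c$ forms. The main obstacle, I expect, is making the genericity arguments in the second paragraph precise — namely controlling the codimension of the ideals $I_t(\cA)$ and $I_{t-1}(\cA)$ for a general choice of homogeneous entries of prescribed (possibly zero or negative) degrees; this is where one must be careful that ``general'' entries of degree $a_j-b_i$ with $a_j-b_i\le 0$ contribute either a scalar or nothing, and verify that the expected-codimension locus is genuinely nonempty and open. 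Much of this is standard (cf.\ \cite{b-v,eise}) and for the detailed argument I would cite the corresponding statements there, so the proof reduces to the degree bookkeeping for non-emptiness and the two genericity reductions.
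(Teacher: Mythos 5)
Your proposal is correct and follows essentially the same route as the paper: the irreducibility argument via the image of the open locus of maximal-codimension matrices in the affine space $\Hom_R(G^*,F^*)$ is exactly the paper's argument, while the non-emptiness criterion, the equality $\overline{W_s(\underline{b};\underline{a})}=\overline{W(\underline{b};\underline{a})}$ for $n\ge c$ (genericity of goodness), and the reduction to a complete intersection when $c=n+1$ are the same degree-bookkeeping and genericity arguments that the paper simply delegates to \cite{KM09}. One small inaccuracy: when $a_{i-1}\ge b_i$ with equality for all $i$, it is not true that ``all entries on and above the diagonal are constants'' (entries in columns $j\ge t$ may have positive degree); the correct point is that the minor of the first $t$ columns has degree $\sum_i(a_{i-1}-b_i)=0$ and is generically a nonzero scalar, and one must still check that \emph{every} (not just the generic) choice of $\cA$ fails to give codimension $c$ in that case.
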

\begin{proof} For $W_s(\underline{b};\underline{a})$ to be non-empty we refer
  to $(2.2)$ of \cite{KM09} whose arguments carry over to the artinian case.
  The text after \cite[(2.2)]{KM09} shows also that
  $ \overline {W_s(\underline{b};\underline{a})} = \overline
  {W(\underline{b};\underline{a})}$
  and that this locus is irreducible in the non-artinian case ($n \ge c)$. To
  see that $ W_s(\underline{b};\underline{a})$ is irreducible also when
  $ W_s(\underline{b};\underline{a})$ parametrizes artinian $k$-algebras we
  consider the affine scheme $\VV =\Hom_{R}(G^*,F^*)$ whose rational points
  correspond to $t \times (t+c-1)$ matrices. Since the vanishing of
  $\Ext_R^i(R/I_{t}(\cA),R)$ is an open property, the subset $U$ of $\VV$ of
  matrices such that $I_{t}(\cA)$ has maximal codimension in $R$ is open and
  irreducible. Then, since there is an obvious morphism from $U$ onto
  $W_s(\underline{b};\underline{a})$, it follows that
  $\overline{ W_s(\underline{b};\underline{a})}$ is irreducible.

  Finally, for $A$ artinian and good determinantal, $I_{t-1}(\cA)$ is not
  contained in ${\goth m}$ because this would imply $\dim R/I_{t-1}(\cA) =
  \dim R/I_{t}(\cA)=0$. Thus $I_{t-1}(\cA)=R$, and it follows that $A$ is a
  c.i. of $R$ by Corollary~\ref{corghost}.
\end{proof}

\begin{remark} \label{earlygood} In earlier papers on determinantal rings we
  often assumed that $A$ was good determinantal. By \cite[Lem.\! 3.2]{KM} good
  determinantal rings satisfy $A \simeq \Hom_A(M,M)$ where
  $M:=\coker \varphi^*$. This and the fact that they are generic complete
  intersections of $R$, was often used in the proofs of the deformation
  results in \cite{KMMNP,KM,K09,K10}. To generalize earlier results to the
  artinian case, we see from Proposition~\ref{WWs} that we can not assume $A$
  good determinantal any more. Also $A \simeq \Hom_A(M,M)$ has to be dropped
  because this seems to never hold for $A$ artinian. It turns, however, out
  that only assuming $_0\! \Hom_A(M,M) \simeq k$ and $A$ standard
  determinantal often allows us to generalize the proofs.
\end{remark}

In the following let $A=R/I_{t}(\cA)$ be standard determinantal, i.e. $(A) \in
W_s(\underline{b};\underline{a})$ and let $X=\Proj(A)$ if $\dim A > 0$. Then
$R$-free resolutions of $A$ and $M:=M_{\cA}:=\coker \varphi^*$ are given by
the {\em Eagon-Northcott} and {\em Buchsbaum-Rim complexes} respectively, see
\cite{BR, e-n, eise}. These resolutions are minimal if $\cA$ is minimal.
By e.g. \cite[(2.3), (2.4)]{K10} the resolutions are\\[-2mm]
\begin{equation}\label{ENres}
 \begin{split}
   0 \ra \wedge^{t+c-1}G^* \otimes S_{c-1}(F)\otimes \wedge^tF\ra \cdots \to
   \wedge^{t+i} G ^*\otimes S _{i}(F)\otimes \wedge ^tF \\ \to \ldots \ra
   \wedge^{t}G^* \otimes S_{0}(F)\otimes \wedge^tF\ra R \ra A \ra 0 \ ,\quad
   {\rm and } \
 \end{split}
\end{equation}
\begin{equation}\label{BR}
  \begin{split}
0 \to \wedge^{t+c-1}G^* \otimes S_{c-2}(F)\otimes \wedge^t F \to
\cdots \to \wedge^{t+i+1} G^* \otimes S _{i}(F )\otimes \wedge ^t F \\
\to \ldots \to \wedge^{t+1}G^* \otimes S_{0}(F)\otimes \wedge^t F \to G^*
\stackrel {\varphi^*} { \longrightarrow} F^* \to M \to 0 \ .
  \end{split}
\end{equation}
The matrix ${\cA}_i$  obtained by deleting the last
$c-i$ columns defines a morphism
\begin{equation}\label{gradedmorfismo} \varphi
_i:F=\bigoplus _{i=1}^tR(b_i)\longrightarrow G_i:=\bigoplus
_{j=0}^{t+i-2}R(a_j) \ .
\end{equation}
and we suppose that {\it all entries of the deleted columns belong to}
$\goth m$. Then the $k$-algebra $D_i\cong R/I_{D_i}$ given by the maximal
minors of ${\cA}_i$ is standard determinantal by \cite{Bru}
and %$\varphi_2$ and hence all
$\varphi_i$ is injective. Letting $B_i= \coker \varphi_i$ and
$M_i = \coker \varphi _i^*$, % and $(-)^*=\Hom_R(-,R)$,
we have an exact sequence
\begin{equation}\label{defMi}
  0 \to B_i^* \to G_i^*\stackrel {\varphi_i^*}{ \longrightarrow} F^*
  \longrightarrow   M_i\cong  \Ext ^1_R(B_i,R)\longrightarrow 0 \ . 
\end{equation}
Then $M_i$ is a $D_i$-module,
$R\twoheadrightarrow D_i \twoheadrightarrow D_{i+1} %\twoheadrightarrow
...\twoheadrightarrow D_c=A$, and there is an exact sequence
\begin{equation}\label{Mi}
0\longrightarrow D_i \longrightarrow M_i(a_{t+i-1})
\longrightarrow M_{i+1}(a_{t+i-1}) \longrightarrow 0
\end{equation}
in which $D_i \longrightarrow M_i(a_{t+i-1})$ is a regular section that
defines $D_{i+1}$ (also when $i+1=c$ and $D_c$ is artinian
\cite[Lem.\,3.6]{KMNP}). Hence the sequence
\begin{equation}\label{Di}
  0 \longrightarrow
  I_{D_{i+1}/D_i} \simeq \Hom_{D_i}(M_i(a_{t+i-1}),D_i)\longrightarrow D_i
  \longrightarrow D_{i+1}\longrightarrow 0 \ .
\end{equation}
is exact. By \eqref{BR} $M_i$ is a maximal Cohen-Macaulay $D_i$-module and so
is $ I_{D_{i+1}/D_i} $ by (\ref{Di}). Moreover by e.g.\;\eqref{BR},
$K_{D_i}(n+1)\cong S_{i-1}M_i(\ell _i)$ where
$\ell_i:=\sum_{j=0}^{t+i-2}a_j-\sum_{q=1}^tb_q$.
\begin{lemma} \label{key} With the above notation (and $M=M_c$), there are
  exact sequences
$$0\rightarrow \Hom_R(M_i,M)\rightarrow F\otimes_R M \rightarrow
G_i\otimes _R M\rightarrow B_i\otimes _R M\rightarrow 0\, , \ {\rm and}$$ 
$$0\rightarrow \Hom_R(B_i,F)\rightarrow \Hom_R(B_i,G_i)\rightarrow
\Hom_R(B_i,B_i)\rightarrow \Hom_R(M_i,M_i)\rightarrow 0 \ .$$
\end{lemma}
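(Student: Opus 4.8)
The plan is to produce both exact sequences by applying a suitable functor to the Buchsbaum–Rim–type presentation of $M_i$ and then splicing. Recall from \eqref{defMi} that we have the four-term exact sequence
\begin{equation*}
  0 \to B_i^* \to G_i^*\stackrel{\varphi_i^*}{\longrightarrow} F^* \longrightarrow M_i\longrightarrow 0 \, ,
\end{equation*}
and that $\varphi_i$ (hence $\varphi_i^*$ up to the kernel $B_i^*$) is injective, so that $0\to B_i^*\to G_i^*\to \im\varphi_i^*\to 0$ and $0\to \im\varphi_i^*\to F^*\to M_i\to 0$ are short exact. Dualizing the presentation $F\xrightarrow{\varphi_i} G_i\to B_i\to 0$ one also has $0\to \Hom_R(B_i,R)\to G_i^*\to F^*$, consistent with the above.

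\emph{First sequence.} I would apply $-\otimes_R M$ to the short exact sequences coming from the presentation $F\xrightarrow{\varphi_i} G_i\to B_i\to 0$. Since $\varphi_i$ is injective with cokernel $B_i$, tensoring with $M$ gives the exact sequence
\begin{equation*}
  \Tor_1^R(B_i,M)\to F\otimes_R M\to G_i\otimes_R M\to B_i\otimes_R M\to 0 \, .
\end{equation*}
So it remains to identify the kernel of $F\otimes_R M\to G_i\otimes_R M$, equivalently the image of $\Tor_1^R(B_i,M)$, with $\Hom_R(M_i,M)$. Here is where the self-duality built into standard determinantal ideals enters: from \eqref{BR}, $M$ (and each $M_i$) is a maximal Cohen–Macaulay module over the respective $D_j$, and $M_i\cong \Ext_R^1(B_i,R)$ by \eqref{defMi}. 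The Buchsbaum–Rim complex \eqref{BR} resolves $M_i$ and its dual complex resolves $B_i$ (up to a shift), so the relevant $\Tor_1^R(B_i,M)$ can be computed as a cohomology of $\Hom_R(\text{BR}_\bullet, R)\otimes M$, and a direct chase — using that $F\otimes_R M\to G_i\otimes_R M$ is $\varphi_i\otimes M$ — shows its image is $\ker(\varphi_i\otimes \id_M)$. On the other hand, applying $\Hom_R(-,M)$ to $G_i^*\xrightarrow{\varphi_i^*} F^*\to M_i\to 0$, and using that $G_i^*,F^*$ are free so that $\Hom_R(F^*,M)=F\otimes_R M$ etc., gives $0\to \Hom_R(M_i,M)\to F\otimes_R M\xrightarrow{\varphi_i\otimes \id_M} G_i\otimes_R M$, identifying $\Hom_R(M_i,M)$ with exactly that kernel. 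Splicing the two identifications yields the first sequence.

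\emph{Second sequence.} I would apply $\Hom_R(B_i,-)$ to the short exact sequence $0\to F\xrightarrow{\varphi_i} G_i\to B_i\to 0$, obtaining the long exact sequence whose first terms are
\begin{equation*}
  0\to \Hom_R(B_i,F)\to \Hom_R(B_i,G_i)\to \Hom_R(B_i,B_i)\to \Ext_R^1(B_i,F)\to \cdots .
\end{equation*}
Since $F$ is free, $\Ext_R^1(B_i,F)\cong \Ext_R^1(B_i,R)\otimes_R F\cong M_i\otimes_R F$ by \eqref{defMi}; but I actually want $\Hom_R(M_i,M_i)$ as the fourth term, so the more efficient route is to dualize first. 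Apply $\Hom_R(-,R)$ to $0\to F^*\text{-part}\to\cdots$; concretely, start from \eqref{defMi}, break it as $0\to B_i^*\to G_i^*\to \im\varphi_i^*\to 0$ and $0\to \im\varphi_i^*\to F^*\to M_i\to 0$, apply $\Hom_R(-,F)$ to the second (with $F$ free, so $\Ext^1_R(F^*,F)=0$ and $\Hom_R(F^*,F)=F\otimes_R F$, while $\Hom_R(M_i,F)=\Hom_R(M_i,R)\otimes F$ which vanishes since $M_i$ is a torsion $D_i$-module), getting $0\to \Hom_R(M_i,F)\to \Hom_R(F^*,F)\to \Hom_R(\im\varphi_i^*,F)\to \Ext_R^1(M_i,F)\to 0$ and then feed this into the sequence obtained by applying $\Hom_R(-,F)$ to the first short exact sequence, noting $\Hom_R(G_i^*,F)=G_i\otimes_R F$ and $\Hom_R(B_i^*,F)\cong \Hom_R(B_i,R)^{**}\otimes\cdots$; since $B_i$ is reflexive (being an $i$th syzygy, $i\ge 2$, over the Cohen–Macaulay ring), $\Hom_R(B_i^*,F)\cong B_i\otimes_R F$ — wait, rather $\Hom_R(B_i^*,R)\cong B_i^{**}\cong B_i$, so $\Hom_R(B_i^*,F)\cong B_i\otimes_R F\cong \Hom_R(R,B_i)\otimes F$; more cleanly, by reflexivity $\Hom_R(B_i^*,F)\cong \Hom_R(F^*,B_i)$. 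Assembling these and using $\Hom_R(B_i,R)\cong B_i^*$ with $\Hom_R(B_i^*, -)$ versus $\Hom_R(B_i,-)$ matched via reflexivity, the composite long exact sequence collapses — after canceling the free-module terms that appear with opposite variance — to the asserted
\begin{equation*}
  0\to \Hom_R(B_i,F)\to \Hom_R(B_i,G_i)\to \Hom_R(B_i,B_i)\to \Hom_R(M_i,M_i)\to 0 \, ,
\end{equation*}
where the last map is the connecting homomorphism composed with the natural isomorphism $\Ext_R^1(B_i,F)\cong \Ext_R^1(B_i,R)\otimes_R F$ — no, the identification of the cokernel with $\Hom_R(M_i,M_i)$ rather than $\Ext_R^1(B_i,F)$ comes from the self-duality: applying $\Hom_R(-,M_i)$ to $0\to B_i^*\to G_i^*\to F^*\to M_i\to 0$ broken into short exact pieces and using $\Ext_R^{>0}(\text{free},M_i)=0$, one gets $\Hom_R(M_i,M_i)$ sitting as the cokernel of a map between $\Hom$'s of free modules into $M_i$, which by another chase matches $\coker(\Hom_R(B_i,G_i)\to\Hom_R(B_i,B_i))$.

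\emph{Where the difficulty lies.} The bookkeeping in the second sequence is the main obstacle: one must correctly track four short exact sequences (the two splittings of \eqref{defMi} and the two splittings of the dual presentation of $B_i$), apply $\Hom_R(-,-)$ with the right module in the right slot, and invoke reflexivity of $B_i$ (valid since $B_i$ is a high syzygy, $i\ge 2$) together with $\Hom_R(M_i,R)=0$ (since $M_i$ is a torsion $R$-module, being supported on the codimension-$i$ subscheme $X_i$) to kill the unwanted free and dual terms. The first sequence is comparatively routine once one observes that both $\Hom_R(M_i,M)$ and $\im\bigl(\Tor_1^R(B_i,M)\to F\otimes_R M\bigr)$ equal $\ker(\varphi_i\otimes\id_M)$. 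In the write-up I would state the two splittings of \eqref{defMi} explicitly, record that $M_i$ is torsion and $B_i$ reflexive, and then present each sequence as a two-line diagram chase rather than spelling out every term of every long exact sequence.
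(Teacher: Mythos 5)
Your proposal is correct and is essentially the paper's proof: both sequences come from applying $-\otimes_R M$ and $\Hom_R(B_i,-)$ to $0\to F\to G_i\to B_i\to 0$ (using $\Ext^1_R(B_i,F)\cong F\otimes_R M_i$ and $\Ext^1_R(B_i,G_i)\cong G_i\otimes_R M_i$ via $M_i\cong\Ext^1_R(B_i,R)$), spliced with the identification of $\Hom_R(M_i,N)$, for $N=M$ or $M_i$, as $\ker\bigl(\Hom_R(F^*,N)\to\Hom_R(G_i^*,N)\bigr)$ obtained by applying $\Hom_R(-,N)$ to \eqref{defMi}. The detour through reflexivity of $B_i$ and the dual splittings is unnecessary (and note the small slip near the end: $\Hom_R(M_i,M_i)$ arises as a kernel, not a cokernel, of the map of Hom's into $M_i$); the direct route you started with and then abandoned --- keeping $\Ext^1_R(B_i,F)\cong F\otimes_R M_i$ as the fourth term and then cutting the five-term sequence off at $\ker(F\otimes_R M_i\to G_i\otimes_R M_i)=\Hom_R(M_i,M_i)$ --- is exactly what the paper does.
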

\begin{proof} (cf. \cite[Lem.\! 3.1 and 3.10]{KM}). We apply  $\Hom_R(B_i,-)$ to
\begin{equation}\label{Bi}
  0 \longrightarrow F\longrightarrow G_i\longrightarrow B_i
  \longrightarrow 0 \end{equation} and using $M_i \simeq \Ext^1_R(B_i,R)$ we
deduce the exact sequence    
$$ 0\rightarrow
\Hom(B_i,F)\rightarrow \Hom_R(B_i,G_i)\rightarrow \Hom_R(B_i,B_i)\rightarrow
F\otimes _R M_i \rightarrow G_i \otimes _R M_i\, .$$ Hence we get the lemma by
applying $\Hom(-,M_i)$ to \eqref{defMi} and $(-)\otimes _R M$ to (\ref{Bi}).
%$$0\rightarrow \Hom(M_i,M)\rightarrow \Hom(F^*,M)\cong
%F\otimes M\rightarrow \Hom(G_i^*,M)\cong G_i\otimes M \ . $$
\end{proof}
\begin{proposition} \label{autdim}
  \label{2Aut(B)} Set $K_i:=\ _0\! \hom(B_{i-1},R(a_{t+i-2}))$ and
  $\ell_i:=\sum_{j=0}^{t+i-2}a_j-\sum_{q=1}^tb_q$. Letting $h_{i-3}:=
  2a_{t+i-2}-\ell_i +n$ for $3\le i \le c$ and $ \binom{a}{n}= 0$ if $a < n$
  we have
  \[ K_{i+3}= \sum _{r+s=i \atop r , s \ge 0} \sum _{0\le i_1< ...< i_{r}\le
    t+i \atop 1\le j_1\le...\le j_s \le t } (-1)^{i-r}
  \binom{h_i+a_{i_1}+\cdots +a_{i_r}+b_{j_1}+\cdots +b_{j_s} }{n} \ {\rm for}
  \ 0 \le i \le c-3 \, , \]
  e.g. $K_3=\binom{h_0}{n}$ and
  $K_4= \sum_{j=0}^{t+1} \binom{h_1+a_j}{n}- \sum_{i=1}^{t}
  \binom{h_1+b_i}{n}$.
  Moreover if \ $_0\! \hom(M,M)=1$ and the entries of the last $c-2$ columns
  belong to $\goth m$ (e.g. $\cA$ is minimal), then
$$\aut(B_c):=\ _0\! \hom(B_c,B_c) = 1+K_3+K_4+\cdots+K_c\, .$$
\end{proposition}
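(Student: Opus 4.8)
The plan is to compute $\aut(B_c)$ inductively by peeling off one column of $\cA$ at a time, using the short exact sequences \eqref{Mi} and \eqref{Bi} together with the second sequence of Lemma~\ref{key}. The starting point is the identity $\aut(B_i):=\ _0\!\hom(B_i,B_i) \simeq\ _0\!\hom(M_i,M_i)$ coming directly from the second exact sequence in Lemma~\ref{key} once we know $_0\!\Hom_R(B_i,F) \to\ _0\!\Hom_R(B_i,G_i)$ is surjective — but more usefully, that sequence reads $\ _0\!\hom(B_i,B_i) = \ _0\!\hom(M_i,M_i) + \left(\ _0\!\hom(B_i,G_i) - \ _0\!\hom(B_i,F)\right)$ once we check the first map is injective (it is, since $\Hom_R(B_i,-)$ is left exact and $F \hookrightarrow G_i$). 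So the first step is to identify $\ _0\!\hom(B_i,G_i)-\ _0\!\hom(B_i,F)$ with an Euler characteristic that collapses; applying $\Hom_R(B_i,-)$ to \eqref{Bi} and chasing shows the relevant difference equals $\ _0\!\hom(B_i,B_i) - \ _0\!\hom(M_i,M_i)$, which is circular — so instead I will relate $\aut(B_{i})$ to $\aut(B_{i-1})$.

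The key is the sequence \eqref{Mi}: $0 \to D_{i-1} \to M_{i-1}(a_{t+i-2}) \to M_i(a_{t+i-2}) \to 0$. Dualizing appropriately, or rather using that $B_{i-1}$ and $B_i$ differ by deleting the last column, one gets a short exact sequence relating $B_{i-1}$, $B_i$, and a rank-one piece $R(a_{t+i-2})$ (via $0 \to R(a_{t+i-2}) \to B_i \to B_{i-1} \to 0$ or its transpose, up to twist — this is where \eqref{defMi} and the structure of the Buchsbaum–Rim complex \eqref{BR} enter). Applying $\ _0\!\Hom_R(B_i, -)$ and $\ _0\!\Hom_R(-, B_i)$ to this sequence, and using that the relevant $\Ext^1$ terms vanish because $M_i$ (hence $B_i$) is maximal Cohen–Macaulay over $D_i$ with the needed depth, I expect to obtain
$$
\aut(B_i) = \aut(B_{i-1}) + \ _0\!\hom(B_{i-1}, R(a_{t+i-2})) = \aut(B_{i-1}) + K_{i+1},
$$
matching $K_i = \ _0\!\hom(B_{i-1}, R(a_{t+i-2}))$ in the statement (with the index shift built into the definition $K_i := \ _0\!\hom(B_{i-1},R(a_{t+i-2}))$). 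Iterating from $i=c$ down to the base case $\aut(B_2)$ then yields $\aut(B_c) = \aut(B_2) + K_3 + \cdots + K_c$, and the base case $\aut(B_2) = 1$ follows because $B_2$ is (a twist of) a second syzygy that is rigid — concretely $\ _0\!\hom(M_2,M_2)=1$ should reduce to the hypothesis $\ _0\!\hom(M,M)=1$ propagated back through \eqref{Mi}, or be checked directly for the $c=2$ case where $B_2$ is a line bundle twist on a complete intersection.

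For the formula for $K_{i+3}$ itself, the plan is to compute $\ _0\!\hom(B_{i+2}, R(a_{t+i+1}))$ via the free resolution of $B_{i+2}$ obtained by splicing \eqref{Bi} with the Buchsbaum–Rim complex \eqref{BR} for $M_{i+2}$. Applying $\Hom_R(-, R(a_{t+i+1}))$ to that finite free resolution, taking the alternating sum of ranks in degree $0$, and recognizing each term $\wedge^{t+j+1}G_{i+2}^* \otimes S_j(F) \otimes \wedge^t F$ contributes a signed sum of binomials $\binom{(\text{twist}) + n}{n}$ — here the twists are exactly $h_i + a_{i_1} + \cdots + a_{i_r} + b_{j_1} + \cdots + b_{j_s}$ with $h_i = 2a_{t+i+1} - \ell_{i+3} + n$ after substituting $\ell_{i+3} = \sum_{j=0}^{t+i+1} a_j - \sum_q b_q$ — gives the displayed combinatorial expression. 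The indices $0 \le i_1 < \cdots < i_r \le t+i$ range over subsets coming from $\wedge^\bullet G_{i+2}^*$ (which has summands $R(-a_{j})$, $0 \le j \le t+i$, noting $G_{i+2}$ has $t+i+1$ summands) and $1 \le j_1 \le \cdots \le j_s \le t$ from $S_\bullet(F)$ (symmetric powers of $F = \oplus R(b_i)$), and the sign $(-1)^{i-r}$ tracks homological position. The special cases $K_3 = \binom{h_0}{n}$ (only $r=s=0$) and the two-sum formula for $K_4$ fall out immediately as a sanity check.

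**Main obstacle.** The hardest part will be establishing the inductive step $\aut(B_i) = \aut(B_{i-1}) + K_i$ rigorously — specifically, pinning down the correct short exact sequence linking $B_{i-1}$, $B_i$, and $R(a_{t+i-2})$ (the duality in \eqref{defMi}–\eqref{Mi} is delicate, and one must be careful whether it is $B_i$ or its $R$-dual that sits in the sequence), and then verifying that the connecting $\Ext^1$ maps that could contribute extra terms actually vanish. That vanishing should come from the maximal Cohen–Macaulay property of $M_i$ over $D_i$ recorded after \eqref{Di} (so $\Ext^j_R(M_i, R) = 0$ for $j \ne 1$, forcing the relevant $\Hom$/$\Ext$ sequences to be short exact), but making the depth bookkeeping work uniformly including the artinian endpoint $i = c$ — where $D_c = A$ has depth $0$ — is exactly the subtlety that the hypothesis $\ _0\!\hom(M,M)=1$ is there to handle, and it is the step I would write out most carefully.
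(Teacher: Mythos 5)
Your overall skeleton matches the paper's: the binomial formula for $K_{i}$ comes from the minimal free resolution of $B_{i-1}^*$ obtained by splicing \eqref{defMi} with the Buchsbaum--Rim complex \eqref{BR} and taking the alternating sum of dimensions in the relevant degree (that part of your plan is fine), and the endomorphism count is proved by the recursion $\aut(B_i)=\aut(B_{i-1})+K_i$ with base case $\aut(B_2)=1$. The gap is in your mechanism for the inductive step. You propose to apply $\Hom_R(B_{i-1},-)$ and $\Hom_R(-,B_i)$ to $0\to R(a_{t+i-2})\to B_i\to B_{i-1}\to 0$ and to kill the connecting terms by a vanishing of $\Ext^1$ coming from the MCM property of $M_i$. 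But by \eqref{defMi} the relevant group is $\Ext^1_R(B_{i-1},R(a_{t+i-2}))\cong M_{i-1}(a_{t+i-2})$, which is \emph{nonzero} in degree $0$ (it is $(M_{i-1})_{a_{t+i-2}}$, and $M_{i-1}$ is generated in degrees $b_1,\dots,b_t\le a_{t+i-2}$); indeed the whole construction rests on $\Ext^1_R(B_i,R)\cong M_i\ne 0$, and $B_i$ is an $R$-module of projective dimension one, not a $D_i$-module. Moreover the term $\ _0\!\Hom_R(R(a_{t+i-2}),B_i)=(B_i)_{-a_{t+i-2}}$ enters your second sequence and is not accounted for. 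So the two long exact sequences do not collapse to $\aut(B_i)=\aut(B_{i-1})+K_i$ as written.

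The paper's actual route avoids this: using the second sequence of Lemma~\ref{key} one writes $\aut(B_i)=\ _0\!\hom(M_i,M_i)+\ _0\!\hom(B_i,G_i)-\ _0\!\hom(B_i,F)$, and the induction is carried out on the difference $\ _0\!\hom(B_i,G_i)-\ _0\!\hom(B_i,F)$ rather than on $\aut(B_i)$ directly. The key input is the short exact sequence $0\to B_{c-1}^*\to B_c^*\to I_{D_{c-1}}(-a_{t+c-2})\to 0$, obtained by combining the dual of $0\to R(a_{t+c-2})\to B_c\to B_{c-1}\to 0$ with \eqref{Mi}; this forces the maps $\ _0\!\Hom(B_c,F)\to\ _0\!\Hom(R(a_{t+c-2}),F)$ and $\ _0\!\Hom(B_c,G_c)\to\ _0\!\Hom(R(a_{t+c-2}),G_c)$ to vanish in a commutative diagram, whence $\ _0\!\Hom(B_{c-1},F)\cong\ _0\!\Hom(B_c,F)$ and $\ _0\!\Hom(B_{c-1},G_c)\cong\ _0\!\Hom(B_c,G_c)$, and the exactness of $\Hom(B_{c-1},-)$ on $0\to R(a_{t+c-2})\to G_c\to G_{c-1}\to 0$ then produces exactly the correction term $K_c$. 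If you want to repair your argument you should replace the claimed $\Ext^1$ vanishing by this diagram chase. Note also that your step $\aut(B_i)=1+\ _0\!\hom(B_i,G_i)-\ _0\!\hom(B_i,F)$ uses $\ _0\!\hom(M_i,M_i)=1$ for every $i$, not just $i=c$, which deserves a word for $i<c$.
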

\begin{proof}
  %We include the main ingredients of a proof. Indeed 
  Using (\ref{BR}) we get a resolution of $M_{i-1}$ which due to 
  \eqref{defMi} leads to the following resolution: % of $B_{i-1}^*$:
$$ 0\rightarrow
E_{i-3} \rightarrow \cdots \rightarrow E_j \rightarrow \cdots \rightarrow E_0
\to B_{i-1}^* \to 0 \ $$ where $E_j:= \wedge^{t+j+1} G^* \otimes S _{j}(F
)\otimes \wedge ^t F$. Taking dimensions of the degree $a_{t+i-2}$ part of the
free modules in this resolution and using $K_i = \dim B_{i-1}^*(a_{t+i-2})_0$,
we get that $K_i$ coincides with the sum of binomials appearing in the
proposition (see \cite[ p.2882]{KM} for details).

  Now dualizing the exact sequence $0\rightarrow R(a_{t+c-2})\rightarrow
  B_c\rightarrow B_{c-1}\rightarrow 0$, we get $$ 0\rightarrow
  \Hom(B_{c-1},R)\rightarrow \Hom_R(B_c,R)\rightarrow R(-a_{t+c-2})\rightarrow
  M_{c-1} \rightarrow M_c\rightarrow 0 \, $$ by (\ref{defMi}). Combining with
  (\ref{Mi}) we get the exact sequence
  \begin{equation*} 0\rightarrow \Hom(B_{c-1},R)\rightarrow
    \Hom_R(B_c,R)\rightarrow I_{D_{c-1}}(-a_{t+c-2})\rightarrow
    0 \end{equation*}
which implies the vanishing of the {\it lower down-arrows} in the
following commutative diagram
\[
\begin{array}{cccccc}
%0 & & 0 \\\downarrow  & & \downarrow  &     \\
 _0\!\Hom(B_{c-1},F) &  \longrightarrow &   _0\!\Hom(B_{c-1},G_c)
\\ \downarrow  & &
\downarrow  &     \\ _0\!\Hom(B_{c},F) &  \longrightarrow &
 _0\!\Hom(B_{c},G_c)
\\ \downarrow  & & \downarrow   \\
\ _0\!\Hom(R(a_{t+c-2}),F) &  \longrightarrow &
_0\!\Hom(R(a_{t+c-2}),G_c)  \, . 
\end{array}
\]
%By (\ref{00}), $\Hom(B_{c},G_c)_0\rightarrow
%\Hom(R(a_{t+c-2}),G_c)_0$ is zero because its image is
%$(I_{D_{c-1}}\otimes G_c(-a_{t+c-2}))_0$. 
It follows that the upper down-arrows are bijections. Since $ \Hom(B_{c-1},-)$
is exact on $0 \to R(a_{t+c-2}) \to G_{c} \to G_{c-1} \to 0$ (because
$ \Ext^1(B_{c-1},-) \cong M_{c-1} \otimes (-)$) we get
%$$0 \to \Hom(B_{c-1},R(a_{t+c-2})) \to
%\Hom(B_{c-1},G_{c})_0+\hom(B_{c-1},G_{c-1}) \to 0$$
\begin{equation} \label{Kcind} _0\!\hom(B_c,G_c)-\ _0\!\hom(B_c,F)= \
  _0\!\hom(B_{c-1},G_{c-1})- \ _0\!\hom(B_{c-1},F)+K_c \, .
 \end{equation}
 By Lemma \ref{key} and the assumption $_0\! \hom(M_c,M_c)=1$, we
 have $$\aut(B_c)=1+\ _0\!\hom(B_c,G_c)-\ _0\!\hom(B_c,F)\, ,$$ and since
 there is a corresponding expression for $\aut(B_{c-1})$
%$$\aut(\cB_{c-1})=1+_0\!\hom(B_{c-1},G_{c-1})-_0\!\hom(B_{c-1},F)$$ 
we get 
\begin{equation} \label{autind} \aut(B_c)=1+K_c+\aut(B_{c-1})-\ _0\!
  \hom(M_{c-1},M_{c-1}) \ \end{equation} by \eqref{Kcind}. $D_{c-1}$ is,
however, good determinantal because
$I_{t}(\cA_{c-1}) \subset I_{t}(\cA_c) \subset I_{t-1}(\cA_{c-1})$ and
$\codim_{D_{c-1}} D_c=1$. Indeed all $D_j$, $2 \le j \le c-1$ are good
determinantal for the same reason and we get $_0\! \hom(M_j,M_j)=1$ for
$2 \le j \le c-1$ by Remark \ref{earlygood}. This simplifies \eqref{autind}
which holds also if we replace $c$ by $j+1$ in \eqref{autind}. Then we
conclude the proof by induction using that $\aut(B_2)=$
$ _0\!\hom (I_{D_2},I_{D_2})=1$.
\end{proof}

Finally recall that if $J=I_{t-1}(\cA)$ and $\dim A > 0$ then $X = \Proj(A)$
is a local complete intersection (l.c.i.) in $X-V(J)$. In the following we
{\it always take} $Z \supset V(J)$ and $U=X-Z$, i.e. so that $X \hookrightarrow
\PP^{n}$ is an l.c.i. in $U$. Since the $1^{\rm st}$ Fitting ideal of $M$ is
equal to $J$, we get that $\tilde{M}$ and $\widetilde {I/I^2}$ are locally
free on $X-V(J)$, cf. \cite[ Lem.\! 1.8]{ulr} and \cite[Lem.\! 1.4.8]{BH}.
Note that $\depth_J A \ge \codim_{X}Sing(X)$ which we use in the following.

\begin{remark} \label{dep} Let $X=X_c$,  $Y=X_{c-1}$  and let
  $\alpha$ be a positive integer. If $X$ is general in
  $W(\underline{b};\underline{a}) \ne \emptyset$ and $a_{i-\min (\alpha
    ,t)}-b_i\ge 0$ for $\min (\alpha ,t)\le i\le t$, then
\begin{equation}\label{a-b} \codim_{X_j}Sing(X_j)\ge \min\{2\alpha
  -1, j+2\}  \ \ {\rm for \  \ }  2 \le j \le c \ , \end{equation}
cf.\,the arguments in \cite[Rem.\! 2.7]{KM} which use \cite{chang,W}. 
In particular letting $\alpha = 3$ we get that $X \hookrightarrow \PP^{n}$
(resp. $Y \hookrightarrow \PP^{n}$ and $X \hookrightarrow Y$)
%\hookrightarrow \PP^{n}$ and $X \hookrightarrow Y$ are local complete
%intersections 
are l.c.i.s outside a subset $Z \subset X$ of codimension at least $
\min(5,c+2)$ (resp. $ \min(4,c)$) in $X$. Indeed we may take
$Z=V(I_{t-1}(\cA_{c-1}))$ for the statement involving $X$ and $Y$ because
\eqref{Di} implies that $\cI_{X/Y}$ is locally free on $Y-Z$,
%(and $Z = V(I_{t-1}(\cA))$ otherwise) 
noting that $Z \subset
V(I_{t}(\cA))=X$. Moreover if $a_{i-2}-b_i\ge 0$ for $2 \le i\le t$, then $X
\hookrightarrow \PP^{n}$ (resp. $Y \hookrightarrow \PP^{n}$ and $X
\hookrightarrow Y$) are l.c.i.s outside a subset $Z \subset X$ of codimension
at least $ \min(3,c+2)$ (resp. $ \min(2,c)$). Notice that we interpret $I(Z)$
as $\goth m$ if $Z= \emptyset$.
\end{remark}

 \subsection{The dimension of the determinantal locus}

In \cite{K10} we proved that the dimension of a non-empty
$W(\underline{b};\underline{a})$ in the case $a_{i-2}-b_i \ge 0$ for $i \ge 2$
and $n-c \ge 1$  is given by
\begin{equation} \label{dimW} \dim {W_s(\underline{b};\underline{a})} = \dim
  W(\underline{b};\underline{a}) = \lambda_c + K_3+K_4+...+K_c \,,
 \end{equation}
 where $K_i$ is defined in Proposition \ref{autdim} and $\lambda_c$ is
defined by {\small
\begin{equation} \label{lamda} \lambda_c:= \sum_{i,j}
    \binom{a_j-b_i+n}{n} + \sum_{i,j} \binom{b_i-a_j+n}{n} - \sum _{i,j}
    \binom{a_i-a_j+n}{n}- \sum _{i,j} \binom{b_i-b_j+n}{n} + 1 
  \end{equation}
}where the indices belonging to $a_j$ (resp. $b_i$) range over $0\le j \le
t+c-2$ (resp. $1\le i \le t$).
\begin{remark} \label{dimWcA} Note that we assume \eqref{ba} and 
  delete columns from the right-hand side for \eqref{dimW} to hold. From the
  proof of Proposition \ref{autdim} we see that we also need all algebras
  $D_i$ in $R\twoheadrightarrow D_2 \twoheadrightarrow
  D_{3} %\twoheadrightarrow
  ...\twoheadrightarrow D_c=A$
  to be standard determinantal. If $D_i$ is standard determinantal and we
  delete a column whose entries belong to $\goth m$, then $D_{i-1}$ is
  standard determinantal by \cite{Bru}, see Proposition \ref{ghost} for
  deleting columns containing units. In particular \eqref{dimW} holds if the
  entries of the last $c-2$ columns of $\cA$ belong to $\goth m$ (e.g. if
  $\cA$ is minimal).
  \end{remark}
  Using, however, the Hilbert function, $H_M(-)$, of $M$ and assuming \
  $_0\! \hom(M,M)=1$, we have by \cite[Rem.\! 3.9]{K10}, for $\cA$ possibly
  non-minimal, that
  \begin{equation} \label{dimWH} \dim {W_s(\underline{b};\underline{a})} =
    \dim W(\underline{b};\underline{a})= \sum_{j=0}^{t+c-2} H_M(a_j)-
    \sum_{i=1}^{t} H_M(b_i) +1 \, . \end{equation} For zero-dimensional
  determinantal schemes ($n-c = 0$) we have the following.

\begin{remark} \label{dim0new}(i) Assume $a_{0} > b_{t}$. Then 
% \eqref{maineq}  below for $X$ general, and 
\eqref {dimW} holds provided $3 \le c \le 5$
  (resp. $c>5$) and$ \ a_{t+c-2} > a_{t-2} $ (resp. $ \ a_{t+3} > a_{t-2}$) by
  \cite[Thm.\! 3.2]{KM09}, see Sec.\! 6 for a generalization.

  (ii) If $\cA$ is a linear $2 \times (c+1)$ matrix, we showed in \cite[Ex.\!
  3.3]{K09} that $\dim W(\underline{b};\underline{a})$ is strictly smaller
  that the right-hand side of \eqref{dimW} for every $c >2$. To our knowledge
  this is the only known case where the equality of \eqref{dimW} fails to hold
  when $n=c$.
\end{remark}

%%%%%%%%%%%%%%%%%%%%%%%%%%%%%%%%%%%%%%%%%%%%%%%%%%%%%%%%%%%%%%%%%%

\subsection{The determinantal locus as components of Hilbert schemes} 

It was shown in \cite{K10} that $ \overline {W(\underline{b};\underline{a})}$,
for $W(\underline{b};\underline{a}) \ne \emptyset $, is an irreducible
component of $\Hi^p(\PP^n)$ provided $n-c \ge2$ and $a_{i-\min (3,t)}-b_i\ge
0$ for $\min (3,t)\le i\le t$. If $n-c \le 1$ then $ \overline
{W(\underline{b};\underline{a})}$ may fail to be an irreducible
component. % of $\Hi(\PP^{n})$.
Indeed $ \overline {W({0,0};{1,1,...,1,2})}$ is not an irreducible component
of $\GradAlg(H)$ for all $c \ge 3$ both when $n=c$ and $n=c+1$, see
\cite[Ex.\! 4.1]{K09}. In both cases the $h$-vector of a general $A$ is
$(1,c,c)$, e.g. the Hilbert function is given by $(\dim A_i)_{i=0}^{\infty}=(1,c+1,2c+1,2c+1,...)$ if $n=c$.

The reason why $ \overline {W(\underline{b};\underline{a})}$ is not a
component is that there exist determinantal rings allowing ``deformations that
do not come from deforming the matrix $\cA$''. Let us recall this notion. We
briefly say ``$T$ a local ring'' (resp.\;``$T$ artinian'') for a local
$k$-algebra $(T,{\mathfrak m}_T)$ essentially of finite type over
$k=T/{\mathfrak m}_T$ (resp. of finite type over $k=T/{\mathfrak m}_T$ such
that ${\mathfrak m}_T^r=0$ for some $r \in \mathbb
Z$). % such that $k=T/{\mathfrak m}_T$.
The local deformation functor ${\rm Def}_{A/R}$ is defined for each artinian
$T$ as the set of graded ($T$-flat) deformations $A_T$ of $A$ to $T$ (i.e.
$A_T \otimes_T k =A$). Moreover ``$T \rightarrow S$ is called a small artinian
surjection'' if there is a morphism
$(T,{\mathfrak m}_T) \rightarrow (S, {\mathfrak m}_S)$ of local artinian
$k$-algebras whose kernel ${\mathfrak a}$ satisfies
${\mathfrak a} \cdot {\mathfrak m}_T=0$.
%Let $A=R/I_t(\cA)$. 

If $T$ is a local ring, we let $\cA_T=(f_{ij,T})$ be a matrix of
homogeneous polynomials of the graded algebra $R_T:=R
\otimes_k T$, satisfying $f_{ij,T} \otimes_T k=f_{ij}$ and $\deg f_{ij,T}
=a_j-b_i$ %(if $\deg f_{ij,T}=0$ we let $ f_{ij,T}=0$)
for all $i,j$. Here all elements of $T$ are considered to be of degree
zero. Once having $\cA_T$, it induces a morphism
\begin{equation} \label{al} \varphi_T: F_T:=\oplus _{i=1}^tR_T(b_i)\rightarrow
  G_T:=\oplus_{j=0}^{t+c-2}R_T(a_j) \ .
\end{equation}
% and we put $M_T= \coker \varphi_T^*$.
\begin{lemma} \label{defalpha} If $A=R/I_t(\cA)$ is standard determinantal,
  then $A_T:=R_T/I_t(\cA_T)$ and $M_T:= \coker \varphi_T^*$ are graded
  deformations of $A$ and $M$ respectively for every choice of $\cA_T$, $T$ a
  local ring. % , as above.
Moreover every graded deformation of $M$ is of the form $M_T$ for
  some $\cA_T$.
\end{lemma}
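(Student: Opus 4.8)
The plan is to prove the two assertions of Lemma~\ref{defalpha} separately: first that the prescribed matrix $\cA_T$ produces flat deformations $A_T$ and $M_T$, and second that \emph{every} graded deformation of $M$ arises this way. For the first part, the Buchsbaum-Rim complex \eqref{BR} is the main tool. Since $A=R/I_t(\cA)$ is standard determinantal, $I_t(\cA)$ has the maximal codimension $c$, so \eqref{BR} (and likewise the Eagon-Northcott complex for $A$) is a finite free resolution. The point is that over $R_T$ the Buchsbaum-Rim complex built from $\varphi_T$ is a complex of free $R_T$-modules whose reduction mod $\fm_T$ is the resolution of $M$ over $R$; since $T$ is artinian (hence $R_T$ is $R$-flat and we may filter by powers of $\fm_T$), acyclicity lifts by the usual exactness criterion (the ``local criterion of flatness'' / the fact that a complex of flat modules that becomes exact after a faithfully-flat-like base change along a filtration with $R$-flat quotients is itself exact, given that $H_0=M_T$ is what we want). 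Concretely one inducts on the length of $T$ using a small surjection $T\to S$ with kernel $\fa$, $\fa\fm_T=0$, so $\fa$ is a $k$-vector space and $\fa\cdot(-)\cong \oplus_k(-)$; tensoring the $R_T$-Buchsbaum-Rim complex with $0\to \fa R_T\to R_T\to R_S\to 0$ and using the inductive exactness over $R_S$ together with exactness over $R\cong \fa R_T/\fm_T\fa R_T\otimes\cdots$ gives exactness over $R_T$ by the long exact sequence and the snake lemma. Exactness of the $R_T$-Buchsbaum-Rim complex then says $M_T$ has the same (free) resolution shape as $M$, so $M_T$ is $R_T$-flat, i.e. a graded $T$-flat deformation of $M$; the analogous argument with the Eagon-Northcott complex gives $T$-flatness of $A_T$. (Alternatively, flatness of $A_T$ follows because its Hilbert function is constant in the family, the resolution having the same Betti numbers.)

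For the converse — every graded deformation of $M$ is some $M_T$ — I would argue as follows. A graded deformation $\widetilde M$ of $M$ to $T$ is $R_T$-flat with $\widetilde M\otimes_T k\cong M$. Lift a minimal presentation: the presentation $G^*\xrightarrow{\varphi^*}F^*\to M\to 0$ of $M$ over $R$, with $F^*=\oplus R(-b_i)$, $G^*=\oplus R(-a_j)$; since $T$ is artinian and $\widetilde M$ is $T$-flat, one shows $\widetilde M$ admits a graded $R_T$-free presentation $G_T^*\xrightarrow{\psi}F_T^*\to\widetilde M\to 0$ with $F_T^*=\oplus R_T(-b_i)$ and $G_T^*=\oplus R_T(-a_j)$ reducing mod $\fm_T$ to the given one — this is the standard fact that over a local base one may lift a minimal free presentation of a flat module (lift generators of $\widetilde M$ matching those of $M$ by Nakayama applied to $\widetilde M$ as a module over the graded ring $R_T$ with its irrelevant-plus-$\fm_T$ maximal ideal; then the kernel of $F_T^*\to\widetilde M$ is itself $R_T$-flat by flatness of $\widetilde M$ and the long exact Tor sequence, so is free of the right ranks in the right degrees, and one lifts its generators to get $\psi$). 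Now $\psi$ is given, in the chosen bases, by a matrix $\cA_T^{t}$ of homogeneous entries in $R_T$ of degrees $a_j-b_i$ reducing to $\cA$; setting $\cA_T$ to be (the transpose of) this matrix exhibits $\widetilde M=\coker\psi=M_T$, as desired.

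The step I expect to be the main obstacle is the lifting of a \emph{minimal} graded free presentation of $\widetilde M$ with \emph{exactly} the predicted free modules $F_T^*,G_T^*$ in the converse direction. Two subtleties need care: (a) ensuring that the lift of generators of $\widetilde M$ is ``minimal,'' i.e. that the number and degrees of generators do not jump — this uses that $\widetilde M$ is $T$-flat so that $\widetilde M\otimes_T k = M$ controls $\widetilde M/\fm_T\widetilde M$ and a graded Nakayama argument applies over $R_T$; and (b) showing $\ker(F_T^*\to\widetilde M)$ is free of the expected ranks and degrees — here one uses that $\Tor_1^{R_T}(\widetilde M,k_T)$ (with $k_T=R_T/\fm R_T$, or the relevant residue field) is computed from the $T$-flat resolution and reduces to $\Tor_1^R(M,k)$, plus the fact that a graded $R_T$-module which is $T$-flat and whose reduction mod $\fm_T$ is $R$-free must itself be $R_T$-free (again via the filtration by powers of $\fm_T$ and the snake lemma, exactly as in the first half). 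Everything else — flatness of $A_T$, the bookkeeping of degrees $a_j-b_i$ — is routine once these lifting statements are in hand, and one can shorten the write-up by citing the analogous argument already used in \cite{K10} and by invoking the exactness of the Buchsbaum-Rim complex as stated in the excerpt.
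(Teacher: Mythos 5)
Your approach is the right one and is essentially what the paper does: the paper disposes of this lemma in two sentences, appealing to functoriality of the Eagon--Northcott and Buchsbaum--Rim complexes in $\cA$ for the first claim (citing \cite{K09} for details) and declaring the converse ``clear because $M=\coker\varphi^*$''; your text supplies the standard details behind both halves. One assertion in your converse direction is, however, false as stated: $\ker(F_T^*\to\widetilde M)$ is \emph{not} free of the expected ranks and degrees. Already over $R$ the kernel of $F^*\to M$ equals $\im\varphi^*\cong G^*/B^*$, the first syzygy of $M$, which is resolved by the remaining $c-1$ terms of the Buchsbaum--Rim complex \eqref{BR} and has projective dimension $c-1\ge 1$, hence is never free for $c\ge 2$. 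The repair is easy and actually shortens the argument: you only need that $K_T:=\ker(F_T^*\to\widetilde M)$ is a $T$-flat deformation of $K:=\im\varphi^*$ (immediate from $T$-flatness of $\widetilde M$ and the long exact $\Tor$-sequence, as you note), after which graded Nakayama, applied degreewise over the artinian local ring $T$, lifts the surjection $G^*\twoheadrightarrow K$ to a surjection $G_T^*\twoheadrightarrow K_T$; the composite $G_T^*\to K_T\hookrightarrow F_T^*$ is the required $\varphi_T^*$ with cokernel $\widetilde M$. For the same reason your emphasis on \emph{minimal} presentations is misplaced (the paper explicitly allows $\cA$ to be non-minimal): one lifts the given, possibly non-minimal, presentation $G^*\to F^*\to M\to 0$, whose ranks and twists are prescribed by $(\underline{b};\underline{a})$, so no control of graded Betti numbers is needed. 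Finally, the first assertion of the lemma is stated for $T$ an arbitrary local ring while your induction on length only treats artinian $T$; the general case follows from the artinian one by the local criterion of flatness.
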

\begin{proof} See \cite[Lem.\;4.2]{K09} for a proof. Indeed
  $A_T=R_T/I_t(\cA_T)$ and $M_T$ are $T$-flat because all maps in their
  Eagon-Northcott and Buchsbaum-Rim complexes are defined in terms of $\cA_T$,
  and these complexes become exact after tensoring by $k$ over $T$. Note also
  that the final statement in the lemma follows immediately from $M= \coker
  \varphi^*$.
\end{proof}
The final statement may be formulated as ``every graded deformation of $M$ to
a local ring $T$ comes from deforming $\cA$''. This may not hold for $A$.
\begin{definition} \label{everydef} Let $A=R/I_t(\cA)$. We say ``every
  deformation of $A$ (or $X$ if $\dim A > 1$, see
  \eqref{Grad}) %(in $\Hi (\PP^{n})$)
  comes from deforming $\cA$'' if for every local ring $T$ and every graded
  deformation $R_T \to A_T$ of $R \to A$ to $T$, then $A_T$ is of the form
  $A_T=R_T/I_t(\cA_T)$ for some $\cA_T$ as
  above. %Note that by \eqref{Grad} we
  %can in this definition replace ``graded deformations of $ R \to A$'' by
  %``deformations of $X \hookrightarrow \PP^{n}$'' provided $\dim X \ge 1$.
\end{definition}
\begin{lemma} \label{unobst} Let $A=R/I_t(\cA)$ be a standard determinantal
  ring, $(A) \in W_s(\underline{b};\underline{a})$. If every deformation of $A$
  comes from deforming $\cA$, then $A$ is unobstructed (i.e. ${\rm
    Def}_{A/R}$ is smooth). Moreover $\overline{
    W_s(\underline{b};\underline{a})}$ is an irreducible component of
  $\GradAlg(H).$
\end{lemma}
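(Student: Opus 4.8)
The plan is to show that the hypothesis forces the natural map on deformation functors from $M$-deformations to $A$-deformations to be ``surjective enough'' to pin down smoothness. First I would recall from Lemma~\ref{defalpha} that every graded deformation of $M$ to an artinian $T$ is of the form $M_T = \coker \varphi_T^*$ for some matrix $\cA_T$, and that such an $\cA_T$ simultaneously produces a deformation $A_T = R_T/I_t(\cA_T)$ of $A$. Combined with the standing hypothesis of Lemma~\ref{unobst} --- that every deformation of $A$ comes from deforming $\cA$ --- this says the assignment $\cA_T \mapsto (A_T, M_T)$ realizes every deformation of $A$ and every deformation of $M$ through a common source, the affine space of matrices $\cA_T$. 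So the first step is: the functor $T \mapsto \{\cA_T\}/\!\sim$ (matrices modulo the obvious $\GL$-action on $F_T$ and $G_T$) maps onto $\mathrm{Def}_{A/R}$.

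Next I would invoke smoothness of the parameter side. The Buchsbaum-Rim complex \eqref{BR} is exact and its differentials are polynomial in the entries of $\cA$, so the locus of matrices with $I_t(\cA_T)$ of maximal codimension is open; over an artinian base every $\cA_T$ automatically has this property by openness and flatness. Hence the matrix functor is (formally) smooth --- it is essentially an affine space quotiented by a smooth group, exactly the argument already used in the proof of Proposition~\ref{WWs} to get irreducibility of $\overline{W_s(\underline b;\underline a)}$. Composing a smooth functor with a surjection of functors need not give smoothness in general, but here the key point is that the obstruction space of $\mathrm{Def}_{A/R}$ receives the obstructions of the matrix functor, which vanish; any obstruction to lifting a deformation $A_T = R_T/I_t(\cA_T)$ along a small surjection $T' \to T$ is killed because one can lift the matrix $\cA_T$ (entry by entry, a lifting of polynomials with prescribed degrees) and the lifted matrix gives the required lift of $A_T$. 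Thus $\mathrm{Def}_{A/R}$ has trivial obstructions, i.e. $A$ is unobstructed.

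For the component statement, recall that by \eqref{Grad} and the surrounding discussion the local ring of $\GradAlg(H)$ at $(A)$ pro-represents (a suitable global form of) $\mathrm{Def}_{A/R}$, so $\GradAlg(H)$ is smooth at $(A)$. Since $(A)$ lies on the irreducible locally closed subset $W_s(\underline b;\underline a)$ whose closure is irreducible (Proposition~\ref{WWs}), and a smooth point lies on a unique irreducible component, that component is $\overline{W_s(\underline b;\underline a)}$. Taking $A$ general in $W_s(\underline b;\underline a)$ --- so that the hypothesis ``every deformation comes from deforming $\cA$'' holds at a general, hence at every, suitably small open point --- gives that $\overline{W_s(\underline b;\underline a)}$ is an irreducible component of $\GradAlg(H)$.

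The main obstacle is making the phrase ``every deformation comes from deforming $\cA$'' interact correctly with \emph{non-small} and \emph{global} (non-artinian local) bases: the definition is phrased for all local rings $T$, but obstruction-theoretic smoothness is checked on small artinian surjections, so I must be careful that the matrix-lifting argument is compatible with the $\GL$-ambiguity in the choice of $\cA_T$ (two matrices giving the same $A_T$ differ by row/column operations, and one must lift this ambiguity consistently). Concretely, the delicate step is checking that the surjection of functors is \emph{smooth}, not merely surjective --- equivalently, that a lift of $A_{T}$ can always be chosen to come from a lift of a \emph{given} $\cA_T$, which is where one uses that the entries $f_{ij,T}$ are free homogeneous polynomials of fixed degree and hence lift unobstructedly along $T' \to T$.
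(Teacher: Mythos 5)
Your argument for the unobstructedness half is essentially right and is the standard one: given a small artinian surjection $T'\to T$ and $A_T\in{\rm Def}_{A/R}(T)$, the hypothesis lets you write $A_T=R_T/I_t(\cA_T)$, the entries of $\cA_T$ lift freely to $T'$, and Lemma~\ref{defalpha} guarantees that $R_{T'}/I_t(\cA_{T'})$ is a flat lift of $A_T$; so all obstructions vanish and $\GradAlg(H)$ is smooth at $(A)$. (The detour through a matrix functor modulo a group action is unnecessary for this, but harmless.)

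The component statement, however, has a genuine gap. From smoothness at $(A)$ you correctly get a unique irreducible component $Q$ of $\GradAlg(H)$ through $(A)$, and since $Q$ contains a neighborhood of $(A)$ and $W_s(\underline{b};\underline{a})$ is irreducible, you get $\overline{W_s(\underline{b};\underline{a})}\subseteq Q$. But your sentence ``that component is $\overline{W_s(\underline b;\underline a)}$'' asserts the reverse inclusion $Q\subseteq\overline{W_s(\underline{b};\underline{a})}$ with no argument: a smooth point of a scheme can perfectly well lie on an irreducible closed subset of strictly smaller dimension, so nothing so far rules out $\overline{W_s(\underline{b};\underline{a})}\subsetneq Q$. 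Taking $A$ general does not repair this. The missing step is precisely where the non-artinian part of Definition~\ref{everydef} is used: apply the hypothesis to the local ring $T=\cO_{\GradAlg(H),(A)}$ (essentially of finite type, hence an allowed $T$) and to the pullback of the universal family, to conclude that this tautological deformation equals $R_T/I_t(\cA_T)$ for a matrix $\cA_T$; spreading the finitely many coefficients of $\cA_T$ out over an affine neighborhood shows that a Zariski-open neighborhood $U$ of $(A)$ in $\GradAlg(H)$ consists entirely of quotients by maximal minors of matrices of the prescribed degree type, i.e. $U\subseteq W_s(\underline{b};\underline{a})$. Then $Q=\overline{Q\cap U}\subseteq\overline{W_s(\underline{b};\underline{a})}$, which together with the first inclusion gives the claim. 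This is exactly the mechanism carried out in the second paragraph of the proof of Theorem~\ref{compthmvar} (the isomorphism $\cO_{\GradAlg(H),(A)}\simeq S$ with universal family $R_S/I_t(\cA_S)$ extending to open subsets $V\subset U$); you flagged the artinian-versus-local-ring issue as an ``obstacle'' but located it in the smoothness step, where artinian bases in fact suffice, rather than here, where it is indispensable.
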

\begin{proof} See \cite[Lem.\;4.4]{K09}, only replacing $ \Hi (\PP^{n})$ by
  $\GradAlg(H)$ in its proof.
\end{proof}
\begin{remark} By these lemmas we get $T$-flat determinantal schemes by just
  parameterizing the polynomials of
  $\cA$ over a local ring $T$, see Rem.\! 4.5 of \cite{K09} and Laksov's
  papers \cite{dan2,dan} for somewhat similar results for more general
  determinantal schemes. 
\end{remark} 

\section{deformations of modules and determinantal rings}

The main goal of this section is to generalize to $A$ artinian the close
relationship between the local deformation functor, ${\rm Def}_{M/R}$, of the
graded $R$-module $M=\coker \varphi^*$ and the corresponding local functor,
${\rm Def}_{A/R}$, of graded deformations of the standard determinantal ring
$A=R/I_t(\cA)$. % as a graded $k$-algebra.
Note that $I:=I_t(\cA)={\rm ann}(M)$ by \cite{eis}. In \cite{K10} these
functors were shown to be isomorphic (resp. the first a natural subfunctor of
the latter) provided $\dim X \ge 2$ (resp. $\dim X = 1$) and $X=\Proj(A)$
general and good determinantal. %(i.e. $\depth_{I_{t-1}(\cA)A}A \ge 1$)
%If $\dim X = 1$, the mentioned subfunctor is indeed the functor that
%corresponds to deforming the determinantal $k$-algebra $A$ as a {\it
%  determinantal} quotient of $R$ (Definition~\ref{subf}). 
The comparison between %the tangent and obstruction spaces of
these deformation functors relied on understanding the spectral sequence
$E_2^{p,q}:= \Ext_A^p(\Tor_q^R(A,M),M) \ \Rightarrow \ \Ext_R^{p+q}(M,M)$ and
its induced 5-term exact sequence {\small
  \begin{equation} \label{specseq} 0 \to \Ext_A^1(M,M) \to \Ext_R^1(M,M)
    \stackrel {\delta}{ \longrightarrow} E_2^{0,1} \to \Ext_A^2(M,M) \to
    \Ext_R^2(M,M)\to \ . % E_2^{1,1} \to \ .
\end{equation}
}Indeed $ \Tor_1^R(A,M) \simeq I \otimes_R M$ implies
$E_2^{0,1} \simeq \Hom_R(I,\Hom_R(M,M))$ in general, and since
$\Hom_A(M,M) \simeq A$ provided $A$ is {\it good} determinantal (i.e.
$\depth_{I_{t-1}(\cA)A}A \ge 1$) by
%generically a complete intersection ($\depth_{I_{t-1}(\cA)A}A \ge 1$)
\cite[Lem.\! 3.2]{KM},
 % This spectral sequence is also important in R. Ile's PhD thesis \cite{I01}
 % and \cite{Ile} and see \cite{IleTr}, Def.\! 3 for an explicit description
 % of $ \Ext_R^1(M,M) \to E_2^{0,1}$.
% ($\depth_{I_{t-1}(\cA)A}A \ge 1$, i.e
it follows in this case that the morphism $\delta$ of
\eqref{specseq} induces a natural map
\begin{equation} \label{specseq1} 
   _0\! \Ext_R^1(M,M) \longrightarrow \  (E_2^{0,1})_0 \simeq \ _0\!
   \Hom_R(I,A) 
\end{equation}
between the tangent spaces of ${\rm Def}_{M/R}$ and ${\rm Def}_{A/R}$. For $A$
standard determinantal, there is still a natural map \ $e_M(T):{\rm
  Def}_{M/R}(T) \to {\rm Def}_{A/R}(T)$, $T$ artinian, obtained by taking a
matrix $\cA_{T}$ whose corresponding morphism has $M_{{T}}$ as cokernel and
letting $A_T:=R_T/I_t(\cA_{T})$ (see Lemma~\ref{defalpha}). Since matrices
inducing the same $M_T$ define the same ideal of maximal minors
(Fitting's lemma, \cite[Cor.\! 20.4]{eise}), this morphism is well-defined.

\begin{remark} The main difficulty in generalizing the comparison above to an
  artinian $A$ is that $A \to \Hom_A(M,M)$ is no longer an isomorphism (for
  $A$ not a c.i.) which implies that the tangent map of \
  ${\rm Def}_{M/R} \to {\rm Def}_{A/R}$ may not be the map in
  \eqref{specseq1}, i.e. $(E_2^{0,1})_0 \simeq \ _0\! \Hom_R(I,A)$ may fail.
  The morphism \ ${\rm Def}_{M/R} \to {\rm Def}_{A/R}$ is, however,
  well-defined also in the artinian case, and since $I_t(\cA)={\rm ann}(M)$
  implies that $A \to \Hom_A(M,M)$ is injective and the functors are
  pro-representable provided $_0\!\Hom_A(M,M) \simeq k$, we shall see that we
  are able to generalize the comparison.
\end{remark}
% Even though we only partially use the spectral sequence in the proof below,
% Theorem~\ref{compthm} is fully motivated by the spectral sequence.
%
\begin{definition} \label{subf} Let $A=R/I_t(\cA)$ be a standard determinantal
  ring and let $\underline {\ell}$ be the category of artinian $k$-algebras
  (cf. the text before \eqref{al}). Then the local deformation functor ${\rm
    Def}_{A \in W_s(\underline{b};\underline{a})}$, defined on $\underline
  {\ell}$, is the subfunctor of ${\rm Def}_{A/R}$ given by: \vspace{0.2cm}
\begin{equation*}
{\rm Def}_{A \in
  W_s(\underline{b};\underline{a})}(T) = \left\{ A_T \in {\rm
    Def}_{A/R}(T) \arrowvert A_T=R_T/I_t(\cA_T) {\rm \ for \ some \
    matrix \     \cA_T \ lifting \ \cA \ to \ } T \right\}.
\end{equation*}
\vspace{-0.3cm}
\end{definition}

With this definition it is obvious that the natural map $e_M:{\rm Def}_{M/R}
\to {\rm Def}_{A/R}$ defined above for any $T \in {\rm ob}(\underline {\ell})$
factors via $ {\rm Def}_{A \in W_s(\underline{b};\underline{a})}
\hookrightarrow {\rm Def}_{A/R}$\! . Moreover we have
\begin{lemma} \label{specFitt} If $D:=k[\epsilon]/(\epsilon^2)$ %dual numbers,
  then the degree zero part, $\delta_0(M)$, of the connecting homomorphism
  $\delta$ of \eqref{specseq} factors through $e_M(D): {\rm Def}_{M/R}(D) \to
  {\rm Def}_{A/R}(D)$, i.e. $\delta_0(M)$ is the composition
  \begin{equation*} \label{specseqlem} _0\! \Ext_R^1(M,M) \stackrel {e_M(D)}{
      \longrightarrow} \ _0\! \Hom_R(I,A) \stackrel {i_{A,M}}{ \longrightarrow}
    \ _0\! \Hom_R(I,\Hom_A(M,M)) \simeq (E_2^{0,1})_0
\end{equation*}
where the map in the middle is induced by $A \hookrightarrow \ \Hom_A(M,M)$,
$1 \in A \mapsto id_{M}$.
\end{lemma}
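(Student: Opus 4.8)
The plan is to identify the degree-zero piece $\delta_0(M)$ of the spectral-sequence edge map with an explicit composition passing through the deformation functors, by working at the level of the defining complex rather than with abstract $\mathrm{Ext}$-bookkeeping. First I would recall that $\mathrm{Def}_{M/R}(D)$ is computed by $\ _0\!\Ext_R^1(M,M)$, and by Lemma~\ref{defalpha} every such first-order deformation $M_D$ arises from a lift $\cA_D=\cA+\epsilon\cB$ of the matrix $\cA$; applying $e_M(D)$ then produces $A_D=R_D/I_t(\cA_D)$, whose class in $\mathrm{Def}_{A/R}(D)=\ _0\!\Hom_R(I,A)$ is the homomorphism $I\to A$ sending each maximal minor $m_{K}(\cA)$ to the image of its first-order variation $\sum_{i,j}(\partial m_K/\partial f_{ij})\,b_{ij}$ modulo $I$. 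This is the map $e_M(D)$ written out concretely. The second map $i_{A,M}$ is just post-composition with the canonical inclusion $A\hookrightarrow\Hom_A(M,M)$ of Fitting type (from \cite[Lem.\! 3.2]{KM} we know this is injective, and it is an isomorphism exactly when $A$ is a c.i.), and the identification $\ _0\!\Hom_R(I,\Hom_A(M,M))\simeq E_2^{0,1}$ comes from $\Tor_1^R(A,M)\simeq I_X\otimes M$ together with $\Hom_A(\Tor_1^R(A,M),M)\simeq\Hom_R(I_X,\Hom_A(M,M))$ as already noted before \eqref{specseq1}.

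The technical core is then to check that the connecting map $\delta$ of the five-term sequence \eqref{specseq}, in degree zero, really coincides with this composition. Here I would use the standard description of $\delta$ for the spectral sequence $E_2^{p,q}=\Ext_A^p(\Tor_q^R(A,M),M)\Rightarrow\Ext_R^{p+q}(M,M)$: an element of $\Ext_R^1(M,M)$ is represented by an extension $0\to M\to E\to M\to 0$ of $R$-modules, and $\delta$ records the obstruction to this extension being pulled back from an $A$-module extension, measured in $E_2^{0,1}=\Hom_A(\Tor_1^R(A,M),M)$ via the multiplication action of $I=\Ann_R M$. The point is that for the deformation $M_D=\coker\varphi_D^*$, the $R_D$-module structure degenerates the $A$-action precisely by the failure of $I$ to annihilate $M_D$, which is governed by how $\cA_D$ moves the ideal of minors — and this is exactly the data of $e_M(D)$ followed by the Fitting inclusion. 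I would make this precise by writing the Buchsbaum-Rim presentation $G^*\xrightarrow{\varphi^*}F^*\to M\to 0$, tensoring the lift over $D$, and tracking the induced map on $\Tor_1^R(A,-)$; functoriality of the spectral sequence in $M$ then forces the claimed factorization.

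The main obstacle I expect is the bookkeeping in the last step: matching the \emph{sign} and the precise module-structure twist in the intrinsic definition of $\delta$ against the down-to-earth ``differentiate the minors'' description of $e_M(D)$. Since $A\to\Hom_A(M,M)$ is not an isomorphism in the artinian case, one must be careful that $\delta_0(M)$ genuinely lands in the image of $i_{A,M}$ and not merely in $E_2^{0,1}$; the content of the lemma is exactly that it factors through the smaller space $\ _0\!\Hom_R(I,A)$. I would handle this by reducing to the case where the extension class comes from a matrix lift (legitimate by Lemma~\ref{defalpha}, since \emph{every} first-order deformation of $M$ does), computing $\delta$ on that representative directly from the snake lemma applied to the presentation complexes over $D$, and checking the two resulting homomorphisms $I\to\Hom_A(M,M)$ agree on the minors $m_K(\cA)$, which generate $I$. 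Everything else — pro-representability, well-definedness of $e_M$ via Fitting's lemma — is already in place from the preceding discussion.
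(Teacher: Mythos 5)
Your proposal is correct in outline and reaches the same concrete description of $e_M(D)$ as the paper (the first-order variation of a maximal minor is the trace of the adjugate against the variation of the matrix, i.e. the paper's $tr(\varphi_1^a\cdot\eta_1)$), and it uses the same reduction to checking the identity on the minors, which generate $I$. Where you genuinely diverge is in how the connecting map $\delta_0$ is actually computed. The paper does not run the snake-lemma computation itself: it passes to a single minor $f_1$, i.e. to the hypersurface $A_1=R/(f_1)$ with the maximal Cohen--Macaulay module $M_1=\coker\varphi_1^*$, quotes Ile's explicit formula $\delta_0(M_1)(\overline{\eta_1})=tr(\varphi_1^a\cdot\eta_1)\otimes_R id_{M_1}$ from \cite[Prop.\ 2]{Icomp}, and then transfers it to $M$ via a commutative diagram of three connecting homomorphisms through $\ _0\!\Ext_R^1(M_1,M)$, using that $\Hom_R(I,-)$ injects into the product of the $\Hom_R(I_1,-)$ over all minors. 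Your route instead computes $\delta_0$ directly on the representative $0\to M\to M_D\to M\to 0$ coming from a matrix lift $\cA_D=\cA+\epsilon\cB$; this works, and is arguably more self-contained, but the step you dismiss as ``bookkeeping'' is exactly the content of Ile's result and should be made explicit: the zeroth Fitting ideal $I_t(\cA_D)$ annihilates $M_D$ (the elementary direction of Fitting's lemma), so for a minor $f_K$ of $\cA$ and a lift $\tilde m\in M_D$ of $m$ one has $f_K\cdot\tilde m=-\epsilon\,(\partial f_K)\cdot\tilde m$ in $\epsilon M_D\cong M$, where $f_K+\epsilon\,\partial f_K$ is the corresponding minor of $\cA_D$; this exhibits $\delta_0(M)(\overline\eta)(f_K\otimes m)$ as multiplication by the class of $\mp\partial f_K$ in $A$, which simultaneously establishes the formula and the factorization through $\ _0\!\Hom_R(I,A)$. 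Once that computation is written out (with the sign pinned down), your argument is complete and bypasses both the hypersurface reduction and the citation of \cite{Icomp}; the paper's approach buys a shorter proof at the cost of an external input and a functoriality diagram, yours buys self-containedness at the cost of doing the explicit cocycle computation that Ile did in the principal case.
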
 

%\begin{remark} Lemma~\ref{specFitt} holds for $c=1$, ie. $\cA$ a square matrix, by \cite[Prop.\!
%  2]{Icomp}.
%\end{remark}
%
\begin{proof} We know that $e_M(D): \ _0\! \Ext_R^1(M,M) \to \ _0\!
  \Hom_R(I,A)$ is well-defined. To describe it, take any $\overline {\eta} \in
  \ _0\! \Ext_R^1(M,M)$ and let $\eta' \in \ _0\! \Hom(G^*,M)$ represent
  $\overline {\eta}$ and $\eta \in \ _0\! \Hom(G^*,F^*)$ map to $\eta'$, cf.
  \eqref{homext}. A maximal minor $f_1$ corresponds to choosing $t$ columns of
  $\cA$ which we for simplicity suppose is obtained by deleting the last $c-1$
  columns of $\cA$. With notations as in Sec.\! 2, $f_1$ is the determinant of
  $\varphi _1^*:G_1^*=\oplus _{j=0}^{t-1}R(-a_j) \to F^*=\oplus
  _{i=1}^tR(-b_i)$. By definition $e_M(D)(\overline {\eta})$ is given by the
  ideal generated by maximal minors of $\varphi^*+\epsilon \eta$. Let $I_1$ be
  the principal ideal generated by $f_1$. Since it is easy to compute
  $\det(\varphi_1^*+\epsilon \eta_1)$ where $\eta_1$ is the composition,
  $G_1^* \hookrightarrow G^* \stackrel {\eta}{ \longrightarrow} F^*$, of the
  natural inclusion $G_1^* \hookrightarrow G^*$ by $\eta$, we get that the
  image of $e_M(D)(\overline {\eta})$ in $_0\! \Hom_R(I_1,A) \simeq A_{(\deg
    f_1)}$ via the natural map $I_1 \hookrightarrow I$ is $tr(\varphi_1^a
  \cdot \eta_1) \otimes _R 1$ where $1 \in A$ is the unity, $\varphi_1^a$ is
  the adjoint of $\varphi_1^*$ and $tr$ the trace map.

  Let $M_1 = \coker \varphi_1^*$ and let $\pi:M_1 \to M$ be the canonical
  surjection. To see that $\delta_0(M)=i_{A,M} \cdot e_M(D)$ it suffices to
  check that the image of $\delta_0(M)(\overline {\eta})$ via the
  composition $$ _0\! \Hom_R(I,\Hom(M,M)) \twoheadrightarrow \ _0\!
  \Hom_R(I_1,\Hom(M,M)) \hookrightarrow \ _0\! \Hom_R(I_1,\Hom(M_1,M))$$ is
  $tr(\varphi_1^a \cdot \eta_1) \otimes _R \pi$ because the maximal minors
  provide a surjection $ \oplus R(-n_i) \twoheadrightarrow I$ and hence an
  injection $ _0\! \Hom_R(I,\Hom(M,M)) \to \ _0\! \Hom_R( \oplus
  R(-n_i),\Hom(M,M))$ (and think of $f_1$ as an arbitrary minor). Then we can
  conclude the proof by the functoriality of connecting homomorphisms and
  Ile's result for $\delta_0(M_1)$ in \cite{Icomp}. Indeed we have a
  commutative diagram
\[
\begin{array}{cccccc}
%0 & & 0 \\\downarrow  & & \downarrow  &     \\
  _0\!\Ext_R^1(M,M) & \stackrel {\delta_0(M)}{
    \longrightarrow} &     _0\! \Hom_R(I,\Hom_R(M,M))
  \\ \downarrow  & &
  \downarrow  &     \\  _0\!\Ext_R^1(M_1,M)) &  \longrightarrow &
  _0\! \Hom_R(I_1,\Hom_R(M_1,M)) 
  \\ \uparrow  & & \uparrow   \\
  _0\!\Ext_R^1(M_1,M_1) & \stackrel {\delta_0(M_1)}{
    \longrightarrow} &     _0\! \Hom_R(I_1,\Hom_R(M_1,M_1)) \,  
\end{array}
\]
of three connecting homomorphisms. If $A_1=R/I_1$ then
%$\delta_0(M_1)=i_{A_1,M_1} \cdot e_{M_1}$, whence
$\delta_0(M_1)(\overline {\eta_1})= tr(\varphi_1^a \cdot \eta_1) \otimes _R
id_{M_1}$ by \cite[Prop.\! 2]{Icomp}, whence maps to $tr(\varphi_1^a \cdot
\eta_1) \otimes _R \pi$ via the right up-arrow. Here $\overline {\eta_1} \in \
_0\! \Ext_R^1(M_1,M_1)$ is represented by the composition 
%$\eta_1 \in \ _0\! \Hom(G_1^*,F^*)$ followed by 
$G_1^* \stackrel {\eta_1}{ \longrightarrow} F^* \to M_1$. Since the elements
$\overline {\eta} \in \ _0\! \Ext_R^1(M,M)$ and $\overline {\eta_1} \in \
_0\! \Ext_R^1(M_1,M_1)$ map to the same element in $ _0\! \Ext_R^1(M_1,M)$
(the one represented by the composition 
$G_1^* \stackrel {\eta_1}{ \longrightarrow} F^* \to M$)
%$\eta_1 \in \ _0\! \Hom(G_1^*,F^*)$ followed by $F^* \to M$) 
the proof is complete.
%of \eqref{specseq} we first consider the
%case where the codimension $c=1$. In this case $I =I_1$, $M=M_1$,
%$\varphi^*=\varphi _1^*$ and $\eta = \eta_1$, and \cite[Prop.\! 2]{Icomp}
%shows that the connecting homomorphism of \eqref{specseq} is indeed given by
%$i_{A_1,M_1}(tr(\varphi_1^a \cdot \eta_1) \otimes _R A_1)$ where $A_1=R/I_1$.
\end{proof}
% because for every graded deformation
%$M_T$ of $M$ to $T$ there exists a matrix $\cA_{T}$ whose corresponding
%morphism has $M_{{T}}$ as cokernel (see Lemma~\ref{defalpha}) and because
%different matrices inducing the same $M_T$ define the same ideal of maximal
%minors (Fitting's lemma, \cite{eise}, Cor.\!
%20.4). 

First we will find the dimension of the pro-representing object of
${\rm Def}_{M/R}$, i.e. we need to generalize \cite[Thm.\! 3.2]{K10} by
weakening its conditions so that it applies to an artinian $A$. Indeed we have
(and see  Proposition \ref{autdim} and Remark \ref{dimWcA} for computing the dimension).
% and we sketch a proof.  %immediately leads to

\begin{theorem} \label{modulethm} Let $A=R/I_t(\cA)$ be standard determinantal
  and let $M = \coker \varphi^*$. Then $M$ is unobstructed, i.e. ${\rm
    Def}_{M/R}$ is formally smooth. Moreover if \ $_0\!\Hom_A(M,M) \simeq k$,
  then ${\rm Def}_{M/R}$ is pro-representable, and the
  pro-representing object $H(M/R)$ of ${\rm Def}_{M/R}$
  satisfies %is formally smooth, and
   $$\dim H(M/R)=  \dim \, _0\! \Ext_R^1(M,M) = \lambda_c + K_3+K_4+...+K_c\ .$$
   Moreover we have \
   $\dim\, _0\! \Ext_R^1(M,M) = \sum_{j=0}^{t+c-2} H_M(a_j)- \sum_{i=1}^{t}
   H_M(b_i) +1\, .$
\end{theorem}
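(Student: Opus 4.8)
\smallskip

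The plan is to prove the three assertions — unobstructedness, pro-representability, and the dimension formula — in that order, exploiting the Buchsbaum-Rim complex \eqref{BR} as the engine behind all the $\Ext$ computations.

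\smallskip

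\emph{Unobstructedness.} Since $M = \coker \varphi^*$ and every graded deformation of $M$ arises from deforming the matrix $\cA$ (Lemma~\ref{defalpha}), the obstruction to lifting a deformation $M_S$ along a small artinian surjection $T \to S$ vanishes: given $\cA_S$ inducing $M_S$, any lift $\cA_T$ of the entries (which exists since the entries are just polynomials with coefficients in $S$, and $T \to S$ is surjective) produces $M_T := \coker \varphi_T^*$, and \eqref{BR} stays exact after tensoring with $T$ because the Buchsbaum-Rim complex of $\cA_T$ reduces to that of $\cA_S$ and the cokernel of the first map is $S$-flat; flatness of $M_T$ over $T$ then follows from the local criterion applied to the (automatically exact) Buchsbaum-Rim complex of $\cA_T$. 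Hence ${\rm Def}_{M/R}$ is formally smooth. This step should be a direct transcription of \cite[Thm.\! 3.2]{K10}; the only point to check is that nothing in that argument used $\dim A > 0$, and indeed it does not, since it is purely about exactness and flatness of \eqref{BR}.

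\smallskip

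\emph{Pro-representability.} By Schlessinger's criteria, formal smoothness plus the finite-dimensionality of the tangent space $_0\!\Ext_R^1(M,M)$ gives a hull; pro-representability requires in addition that the natural map is injective on the relevant fibred products, which by the standard criterion (cf.\ \cite{L}) holds once $_0\!\Hom_A(M,M)$ — the "infinitesimal automorphisms" of $M$ as a module annihilated by $I$, equivalently $_0\!\Hom_R(M,M)$ since $I = \Ann(M)$ — is one-dimensional, i.e.\ equals $k$. This is exactly the hypothesis $_0\!\Hom_A(M,M) \simeq k$, so pro-representability of ${\rm Def}_{M/R}$ with pro-representing object $H(M/R)$ follows, and by formal smoothness $\dim H(M/R) = \dim\, _0\!\Ext_R^1(M,M)$.

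\smallskip

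\emph{The dimension formula.} It remains to compute $\dim\, _0\!\Ext_R^1(M,M)$ and to identify both closed expressions. The key input is that $_0\!\Ext_R^i(M,M) = 0$ for $i \ge 2$: this is what makes $M$ unobstructed representation-theoretically, and it follows by applying $\Hom_R(-,M)$ to the Buchsbaum-Rim resolution \eqref{BR} of $M$ and using that $M$ is (essentially) a maximal Cohen-Macaulay module with the right depth properties together with local duality — or more directly, this identity is already established in \cite[Thm.\! 3.2]{K10} and its proof does not require $n - c \ge 1$. Granting this, the alternating sum $\sum_i (-1)^i \dim\, _0\!\Ext_R^i(M,M)$ telescopes to $\dim\, _0\!\Hom_R(M,M) - \dim\, _0\!\Ext_R^1(M,M) = 1 - \dim\, _0\!\Ext_R^1(M,M)$ using $_0\!\Hom_A(M,M) \simeq k$; on the other hand the same alternating sum can be computed from the first-quadrant presentation $0 \to \Hom_R(M,M) \to (F^*)^\vee \otimes ?$ — more precisely, apply $_0\!\Hom_R(-,M)$ to the exact sequence $G^* \xrightarrow{\varphi^*} F^* \to M \to 0$ and to $0 \to M_1^\vee \to \cdots$, yielding $\dim\, _0\!\Ext_R^1(M,M) = \dim\, _0\!\Hom_R(F^*,M) - \dim\, _0\!\Hom_R(G^*,M) + 1 = \sum_{i=1}^t H_M(b_i) \cdot(\text{sign adjustment})$. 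Carrying the bookkeeping through (and using $H_M(m) = \dim_k M_m = \sum_i H_R(m - (- b_i)) - \cdots$, together with $\binom{a}{n} = 0$ for $a < n$) gives both
$$\dim\, _0\!\Ext_R^1(M,M) = \sum_{j=0}^{t+c-2} H_M(a_j) - \sum_{i=1}^t H_M(b_i) + 1$$
and, after substituting the Hilbert function of $M$ computed from \eqref{BR} and collecting binomial coefficients, the closed form $\lambda_c + K_3 + \cdots + K_c$ with $\lambda_c$ as in \eqref{lamda}; the $K_i$-terms are precisely the contributions of the higher Buchsbaum-Rim syzygies $\wedge^{t+i+1}G^* \otimes S_i(F) \otimes \wedge^t F$, matching the formula for $K_i$ recorded in Proposition~\ref{autdim}.

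\smallskip

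\emph{Main obstacle.} The genuinely delicate point is the vanishing $_0\!\Ext_R^i(M,M) = 0$ for $i \ge 2$ in the \emph{artinian} case $n + 1 = c$: the argument in \cite{K10} is phrased for $n - c \ge 1$ and rests on depth/dimension counts that become borderline when $A$ is artinian. I expect one must re-run that computation tracking degrees more carefully, applying $\Hom_R(-,M)$ to \eqref{BR}, using that each module $\wedge^{t+i+1}G^*\otimes S_i(F)\otimes\wedge^t F$ is free, that $M$ has a two-term free-module presentation, and that $_0\!\Ext_R^{\ge 2}(\text{free}, M) = 0$ while the connecting maps in the long exact sequences are controlled by the MCM property of $M$ over the successive $D_i$; the hypothesis $_0\!\Hom_A(M,M)\simeq k$ is what prevents an extra summand from sneaking into $_0\!\Hom_R(M,M)$ and therefore into the Euler-characteristic bookkeeping. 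Everything after that vanishing is a (somewhat lengthy but routine) binomial-coefficient identity, essentially the one already done in \cite[p.~2882]{KM} and \cite{K10}, now valid verbatim since no inequality $n - c \ge 1$ enters once $\binom{a}{n}=0$ for $a<n$ is adopted.
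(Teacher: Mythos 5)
Your treatment of unobstructedness and pro-representability is sound and is essentially the paper's: every graded deformation of $M$ is $\coker\varphi_T^*$ for a lift $\cA_T$ of the matrix (Lemma~\ref{defalpha}), matrix entries always lift along a surjection $T\to S$, and $_0\!\Hom_A(M,M)\simeq k$ lets one lift automorphisms so that the smooth hull pro-represents. The gap is in the dimension formula, where you rest everything on the vanishing $_0\!\Ext_R^i(M,M)=0$ for $i\ge 2$. That vanishing is nowhere established (you yourself flag it as the ``main obstacle'' and leave it open), and it is false in general: when $A$ is artinian $M$ has finite length, so $\pd_R M=n+1$ and $\Ext_R^{n+1}(M,M)\simeq \Ext_R^{n+1}(M,R)\otimes_R M\ne 0$; what \cite{K10} proves under depth hypotheses is vanishing of certain $\Ext_A^i(M,M)$ over the ring $A$, a different statement which is precisely what fails in the artinian case and is therefore carried as a hypothesis in Theorems~\ref{compthm}--\ref{Amodulethm5}. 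Moreover, even granting the vanishing, your Euler-characteristic identity would read $1-\ _0\!\ext_R^1(M,M)=\sum_{i\ge 0}(-1)^i\ _0\!\hom_R(P_i,M)$ over \emph{all} terms of the Buchsbaum--Rim resolution, not just $F^*$ and $G^*$, so it would not yield the stated formula without a further cancellation that does not occur.

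The missing step is elementary and makes the higher $\Ext$'s irrelevant: the differential $d_1:\wedge^{t+1}G^*\otimes S_0(F)\otimes\wedge^tF\to G^*$ in \eqref{BR} has entries that are maximal minors of $\cA$, hence lie in $I=\Ann(M)$, so $\Hom_R(d_1,M)=0$. Consequently $_0\!\Ext_R^1(M,M)$ is \emph{exactly} the cokernel of $_0\!\Hom_R(F^*,M)\to\ _0\!\Hom_R(G^*,M)$, which gives the four-term exact sequence \eqref{homext} and, with $_0\!\hom_R(M,M)=1$, immediately the formula $\sum_j H_M(a_j)-\sum_i H_M(b_i)+1$. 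Note also that from the presentation $G^*\to F^*\to M\to 0$ alone one only gets $\Ext^1$ as a \emph{submodule} of that cokernel; the observation about the minors is what upgrades this to equality. Finally, the identification with $\lambda_c+K_3+\cdots+K_c$ is not mere bookkeeping: the paper obtains it by applying $_0\!\Hom_R(F^*,-)$ and $_0\!\Hom_R(G^*,-)$ to \eqref{defMi} and then using Lemma~\ref{key} together with Proposition~\ref{autdim} to show $_0\!\hom(B,G)-\ _0\!\hom(B,F)=K_3+\cdots+K_c$; that reduction (or an equivalent explicit computation of $H_M$ from \eqref{BR}) has to be carried out rather than asserted.
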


\begin{proof} For the unobstructedness of $M$, see \cite[Thm.\! 3.1]{K10} or
  Ile's PhD thesis \cite[ch.\! 6]{I01}.

 To see the dimension formula we {\it claim} that there is an exact sequence
\begin{equation} \label{homext}
  0 \to  \ _0\! \Hom_R(M,M) \to\ _0\! \Hom_R(F^*,M) \to \ _0\!
  \Hom_R(G^*,M) \to \ _0\! \Ext_R^1(M,M) \to 0.
\end{equation}
Indeed the map $d_1: \wedge^{t+1}G^* \otimes S_{0}(F)\otimes \wedge^tF \to
G^*$ appearing in the Buchsbaum-Rim complex \eqref{BR}, takes an element of
$\wedge^{t+1}G^* \otimes S_{0}(F)\otimes \wedge^tF $ to a linear combination
of maximal minors with coefficients in $G^*$ because
$I_t(\cA) %{\rm ann}(M) is generated by maximal minors of $\cA$ (i.e.
=\im(\wedge^{t}G^* \otimes S_{0}(F)\otimes \wedge^tF \to R$) is generated by
maximal minors, whence $\ _0\! \Hom_R(d_1,M)=0$ because $I={\rm ann}(M)$. So
if we apply $\ _0\! \Hom_R(-,M)$ to \eqref{BR}, we get \eqref{homext} by the
definition of $\ _0\! \Ext_R^i(M,M)$.

Taking dimensions of the groups in \eqref{homext} and
using %that the $a_j$ and $b_i$ appear in
\eqref{gradedmorfismo} and $\ _0\! \hom(M,M)=1$ we get the latter dimension
formula for \ $\dim \ _0\! \Ext_R^1(M,M)$. Moreover if we apply the exact
functors $ _0\! \Hom_R(F^*,-)$ and $ _0\! \Hom_R(G^*,-)$ onto \eqref{defMi}
with $i=c$ we get
\[ \ _0\! \hom_R(G^*,M) - \ _0\! \hom_R(F^*,M) \ = \lambda_c - 1 + \ _0\!
\hom_R(G^*,B^*) - \ _0\! \hom_R(F^*,B^*) \]
by using the definition \eqref{lamda} of $ \lambda_c$. Hence 
%Since $\ _0\! \hom(M,M)=1$ by assumption 
we get the dimension formula (i.e. the rightmost
equality) of Theorem~\ref{modulethm} provided we can prove $$ \ _0\!
\hom_R(B,G) - \ _0\! \hom_R(B,F) = K_3+...+K_c.$$ This follows from
Proposition~\ref{autdim} and the second exact sequence of Lemma~\ref{key}.

Finally the condition $_0\!\Hom_A(M,M) \simeq k$ allows us to lift
automorphisms, i.e. $ {\rm Def}_{M/R}$ is pro-representable, cf. \cite[Thm.\!
19.2]{Hart1}, whence $\dim H(M/R )= \dim \ _0\! \Ext_R^1(M,M)$ by the
smoothness of ${\rm Def}_{M/R}$.
\end{proof}

Let $D:=k[\epsilon]/(\epsilon^2)$ be the dual numbers and denote the dimension
in Theorem~\ref{modulethm} by 
\begin{equation} \label{lamext}
\lambda := \dim \ _0\! \Ext_R^1(M,M) =
\lambda_c + K_3+K_4+...+K_c \, .
\end{equation}
 Recalling that $
W_s(\underline{b};\underline{a})$ is a certain quotient of an open irreducible
set in the affine scheme $\VV =\Hom_{R}(G^*,F^*)$ parameterizing determinantal
$k$-algebras (Proposition \ref{WWs}),
 %  whose rational points correspond to $t \times (t+c-1)$ matrices and that
 % $\dim W_s(\underline{b};\underline{a}) \le \lambda$ (\cite{KM}, p.\! 2877 and
 % Thm.\! 3.5), 
  we get

  \begin{theorem} \label{compthm} Let $A=R/I$, $I=I_{t}(\cA)$, be a standard
    determinantal $k$-algebra and let $M = \coker \varphi^*$. \\[-2mm]

    {\rm (i)} If \ $_0\!\Hom_A(M,M) \simeq k$ and $ _0\! \Ext_A^1(M,M)= 0$
    then %the functor ${\rm Def}_{A \in
    %  W_s(\underline{b};\underline{a})}$ is pro-representable, the
    %pro-representing object has dimension $\dim
   %    W_s(\underline{b};\underline{a})$ and 
$${\rm Def}_{A \in
  W_s(\underline{b};\underline{a})} \simeq {\rm Def}_{M/R} \ . $$
Hence ${\rm Def}_{A \in W_s(\underline{b};\underline{a})}$ is a formally
smooth pro-representable functor and the pro-representing object has dimension
$$\dim   W_s(\underline{b};\underline{a}) = \lambda_c + K_3+K_4+...+K_c\ ,$$
cf.\! Remark \ref{dimWcA}. Moreover the tangent space of \
${\rm Def}_{A \in W_s(\underline{b};\underline{a})}$ is the subvector space of
$ _0\! \Hom_R(I,A)$ that corresponds to graded deformation $R_D \to A_D$ of
$R \to A$ to $D$ of the form $A_D=R_D/I_t(\cA_D)$ for some matrix $\cA_D$
which lifts $\cA$ to $D$. \vspace{0.15cm}

{\rm (ii)} If in addition \ $ _0\! \Ext_A^2(M,M)= 0$, then $ {\rm Def}_{M/R}
\simeq {\rm Def}_{A \in W_s(\underline{b};\underline{a})} \simeq {\rm
  Def}_{A/R}$ and \ ${\rm Def}_{A/R}$ is formally smooth. Moreover every
deformation of $A$ %(in $\Hi (\PP^{n})$)
comes from deforming $\cA$ (cf.\! Definition~\ref{everydef}).
  \end{theorem}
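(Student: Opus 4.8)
The plan is to leverage the 5-term exact sequence \eqref{specseq} in degree zero together with Lemma~\ref{specFitt} and part (i), which we already have at our disposal. First I would observe that by part (i) the functor $\mathrm{Def}_{A \in W_s(\underline{b};\underline{a})}$ is formally smooth and pro-representable, and the map $e_M: \mathrm{Def}_{M/R} \to \mathrm{Def}_{A \in W_s(\underline{b};\underline{a})}$ is an isomorphism under the standing hypotheses. So it remains only to show that the inclusion $\mathrm{Def}_{A \in W_s(\underline{b};\underline{a})} \hookrightarrow \mathrm{Def}_{A/R}$ is an isomorphism; combined with the formal smoothness already established for the subfunctor, this gives formal smoothness of $\mathrm{Def}_{A/R}$, and then ``every deformation of $X$ comes from deforming $\cA$'' is just a restatement of the surjectivity (hence bijectivity) of this inclusion on every artinian $T$, in the sense of Definition~\ref{everydef}.

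To prove that inclusion is an isomorphism of functors, the standard strategy is to check it is bijective on tangent spaces and injective on obstructions (equivalently, formally étale), then invoke the smoothness already in hand. On tangent spaces, $\mathrm{Def}_{A/R}(D) \simeq {}_0\!\Hom_R(I,A)$, and by Lemma~\ref{specFitt} the connecting map $\delta_0(M)$ factors as $e_M(D)$ followed by the map $i_{A,M}$ induced by $A \hookrightarrow \Hom_A(M,M)$. Under the hypothesis ${}_0\!\Hom_A(M,M) \simeq k$ one has $A \hookrightarrow \Hom_A(M,M)$, and the extra vanishing ${}_0\!\Ext^1_A(M,M) = 0$ forces, via the degree-zero part of \eqref{specseq}, the map $\delta_0(M): {}_0\!\Ext^1_R(M,M) \to (E_2^{0,1})_0$ to be injective; the further vanishing ${}_0\!\Ext^2_A(M,M) = 0$ (together with left-exactness considerations at the $E_2^{0,1} \to \Ext^2_A(M,M)$ spot) should pin down the image of $\delta_0(M)$ to be all of ${}_0\!\Hom_R(I,A)$, identifying the tangent space of the subfunctor with that of $\mathrm{Def}_{A/R}$. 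I would spell this out by chasing \eqref{specseq} in degree zero: the relevant stretch reads ${}_0\!\Ext^1_A(M,M) \to {}_0\!\Ext^1_R(M,M) \to (E_2^{0,1})_0 \to {}_0\!\Ext^2_A(M,M)$, so the two vanishings make the middle map an isomorphism, and precomposing with the tangent-level isomorphism from part (i) identifies everything.

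For the obstruction comparison one passes to a small artinian surjection $T \to S$ and must show that a deformation $A_S \in \mathrm{Def}_{A/R}(S)$ lies in the image of $\mathrm{Def}_{A/R}(T)$ exactly when the corresponding (under the tangent identification) module deformation $M_S$ lifts to $T$; this again uses that the obstruction spaces of $\mathrm{Def}_{A/R}$ and $\mathrm{Def}_{M/R}$ get identified once ${}_0\!\Ext^2_A(M,M) = 0$ kills the relevant term in the longer spectral-sequence exact sequence, so that ${}_0\!\Ext^2_R(M,M)$ governs both. Since $\mathrm{Def}_{M/R}$ is unobstructed (Theorem~\ref{modulethm}), one concludes $\mathrm{Def}_{A/R}$ is too, and the subfunctor inclusion is surjective on all artinian $T$ by a standard induction along small surjections, starting from the tangent-space bijection at $T = D$. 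The main obstacle I anticipate is the obstruction-space identification: one must be careful that the 5-term sequence \eqref{specseq} extends far enough (or that the relevant higher differentials in the spectral sequence vanish in degree zero) so that the comparison of obstruction theories is genuine and functorial in the small extension, rather than merely a dimension count — this is where the hypotheses ${}_0\!\Hom_A(M,M)\simeq k$, ${}_0\!\Ext^1_A(M,M)=0$ and ${}_0\!\Ext^2_A(M,M)=0$ all have to be used in concert, and where Lemma~\ref{specFitt} does the essential work of making $e_M$ compatible with the obstruction maps.
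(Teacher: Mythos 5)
Your tangent-space analysis for part (ii) is exactly the paper's: under $_0\!\Ext_A^1(M,M)=0$ and $_0\!\Ext_A^2(M,M)=0$ the degree-zero part of \eqref{specseq} makes $\delta_0(M)$ an isomorphism onto $(E_2^{0,1})_0$, and the factorization of Lemma~\ref{specFitt} through the injection $_0\!\Hom_R(I,A)\hookrightarrow\ _0\!\Hom_R(I,\Hom_A(M,M))$ then forces all the maps in \eqref{specseq2} to be isomorphisms. But there are genuine gaps. First, you prove nothing of part (i), which is where most of the work of the theorem lies: one must check that ${\rm Def}_{M/R}(T)\to{\rm Def}_{A\in W_s(\underline{b};\underline{a})}(T)$ is well defined and surjective (Fitting's lemma), injective --- this is precisely where $_0\!\Ext_A^1(M,M)=0$ enters, to see that two matrices with the same ideal of maximal minors give the \emph{same} graded deformation of $M$ over $A_T$ --- and that the pro-representing object algebraizes so that its dimension, computed by Theorem~\ref{modulethm}, really equals $\dim W_s(\underline{b};\underline{a})$. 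Treating (i) as ``already at our disposal'' leaves the bulk of the statement unproved.

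Second, your proposed ``obstruction comparison'' rests on a false identification: the obstruction space of ${\rm Def}_{A/R}$ is $_0\!\Ext_A^1(I/I^2,A)$, and \eqref{specseq} does not identify it with $_0\!\Ext_R^2(M,M)$; the paper's examples exhibit $_0\!\Ext_A^1(I/I^2,A)\neq 0$ while ${\rm Def}_{A/R}$ is nevertheless smooth, so no such identification can hold. Fortunately your fallback sentence is the correct (and the paper's) argument and needs no comparison of obstruction spaces at all: since ${\rm Def}_{M/R}$ is formally smooth and the tangent map to ${\rm Def}_{A/R}$ is surjective, the torsor structure on the set of liftings over a small surjection shows ${\rm Def}_{M/R}\to{\rm Def}_{A/R}$ is smooth, hence ${\rm Def}_{A/R}$ is smooth, and tangent bijectivity together with pro-representability yields the isomorphism. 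Finally, ``every deformation of $X$ comes from deforming $\cA$'' in the sense of Definition~\ref{everydef} quantifies over local rings essentially of finite type, not merely artinian ones; passing from the artinian statement to this one is not a restatement but requires the algebraization argument (cf.\ Corollary~\ref{everydefT} and the second paragraph of the proof of Theorem~\ref{compthmvar}).
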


  \begin{proof} We already know that \
    ${\rm Def}_{M/R}(T) \to {\rm Def}_{A \in
      W_s(\underline{b};\underline{a})}(T)$
    is well-defined (Fitting's lemma) and obviously surjective for any
    $(T,\mathfrak{m}_T)$ in $\underline{\ell}$. To see the injectivity, let
    $M_i$ for $i=1,2$ represent two elements of ${\rm Def}_{M/R}(T)$ mapping
    to the same element
    $A_T \in {\rm Def}_{A \in W_s(\underline{b};\underline{a})}(T)$. Since
    $M_i$, $i=1,2$ are cokernels of two morphisms given by matrices
    $ (\cA_T)_1$ and $ (\cA_T)_2$ lifting $\cA$ to $T$, we have
    $A_T=R_T/I_t((\cA_T)_1) = R_T/I_t((\cA_T)_2)$, cf.\!
    Definition~\ref{subf}. To show the injectivity, we may by induction on
    $r \ge 1$ suppose $\mathfrak{m}_T^{r+1}=0$ and
    ${\rm Def}_{M/R}(T/\mathfrak{m}_T^{r}) \rightarrow {\rm Def}_{A \in
      W(\underline{b};\underline{a})}(T/\mathfrak{m}_T^{r})$
    injective. Noticing that the condition $_0\!\Hom_A(M,M) \simeq k$ allows
    us to lift automorphisms of $M_i \otimes_T T/\mathfrak{m}_T^{r}$ to
    automorphisms of $M_i$, it follows from $ _0\! \Ext_A^1(M,M)= 0$ that the
    graded deformations $M_1$ and $M_2$ represent the {\em same} element of
    ${\rm Def}_{M/R}(T)$, and the injectivity is proved.
  %, cf.\! \cite[Thm.\! 5.2]{K10}. 
%  and \cite{siq},
   % while ${\rm Def}_{A \in W_s(\underline{b};\underline{a})}$ being a
    %subfunctor of the pro-representable functor ${\rm Def}_{A/R}$ (\cite{K04},
    %Prop.\! 9), is pro-representable by \cite{S}. 

    Moreover using
    ${\rm Def}_{A \in W_s(\underline{b};\underline{a})} \simeq {\rm
      Def}_{M/R}$
    and Theorem~\ref{modulethm} we get that
    ${\rm Def}_{A \in W_s(\underline{b};\underline{a})} $ is pro-representable
    and formally smooth, whence its pro-representing object $H$ satisfies
    $H \simeq H(M/R)$ and since $H$ and its ``universal family'' $A_H$ are
    algebraizable by the proof of \cite[Thm.\! 5.2]{K10} (or use the $2^{nd}$
    paragraph of the proof of Theorem~\ref{compthmvar} of this paper to see
    that we get a morphism $\cO_{\GradAlg(H),(A)} \twoheadrightarrow S$,
    $H \simeq \hat S $ with ``universal family'' $A_S:=R_S/I_t(\cA_S)$, which
    extends to open subsets $V \subset U$, of
    $ W_s(\underline{b};\underline{a})$ and $ \GradAlg(H)$ respectively with
    $(A) \in V$ and $\cO_{V,(A)} =S$), we get
    $\dim H = \dim W_s(\underline{b};\underline{a})$ by
    Theorem~\ref{modulethm}. The description of its tangent space follows from
    \eqref{specseq} and Definition~\ref{subf} since $ _0\! \Hom_R(I,A)$ is the
    tangent space of $ {\rm Def}_{A/R}$.

If $ _0\! \Ext_A^2(M,M)= 0$ then
    $_0\!\Ext_R^1(M,M) \simeq \ (E_2^{0,1})_0\simeq \ _0\!
    \Hom_R(I,\Hom_A(M,M))$ by \eqref{specseq}. Hence the first part of the
    proof and $A \hookrightarrow \Hom_A(M,M)$ imply that all the injections in
    \begin{equation} \label{specseq2} {\rm
        Def}_{M/R}(D)   \hookrightarrow  \ {\rm
        Def}_{A/R}(D) \simeq \ _0\! \Hom_R(I,A)  \hookrightarrow \ _0\!
      \Hom_R(I,\Hom_A(M,M)) 
\end{equation}
are isomorphisms of finite dimensional vector spaces. It follows that $ {\rm
  Def}_{M/R} \to {\rm Def}_{A/R}$ is an isomorphism since it is bijective on
tangent spaces and $ {\rm Def}_{M/R}$ is formally smooth. Then we conclude the
proof by arguing as in the proof of \cite[Thm.\! 5.2]{K10}.
\end{proof}
\begin{remark} \label{Amodulerem3} The theorems of this section admit {\it
    substantial generalizations} with respect to $R$ being a polynomial ring.
  Indeed we may let $R$ be any graded quotient of a polynomial ring. 
 %$k[x_0,  \dots ,x_N]$. %, $k = \overline k$, with the standard grading.
  The main reason for this %having the proofs of
  %Theorems~\ref{modulethm} and  \ref{compthm} work 
  is that the spectral sequence of this section, cf.\;\eqref{specseq}, Fitting's
  lemma and the exactness of the Buchsbaum-Rim complex are valid with almost
  no assumption on $R$ (but we need to replace the binomials defining
  $\lambda_c$ and $K_i$ with their Hilbert functions; the final formula of
  Theorems~\ref{modulethm} is, however, valid as stated).
  \end{remark}

\section{the locus of determinantal k-algebras}

In this section we generalize \cite[Theorems 5.5 and 5.8]{K10} concerning
dimension and smoothness of $\GradAlg(H)$ along
$ W_s(\underline{b};\underline{a})$,
%two of the main theorems (Theorems 5.5 and 5.8)
to cover the artinian case. Indeed using that
Theorem~\ref{compthm} extends \cite[Thm.\! 5.2]{K10} by only assuming
$_0\!\Hom_A(M,M) \simeq k$ and $A$ standard determinantal instead of good
determinantal, the generalizations in Theorems~\ref{Amodulethm3} and
\ref{Amodulethm5} from good to standard determinantal are rather immediate. In
this section we also prove a new result (Theorem~\ref{compthmvar}) to cover
cases where $\dim W_s(\underline{b};\underline{a}) \ne\lambda$.

In the first theorem we let, as in \cite{K10}, $$\ext^2(M,M):= \dim \ker (\
_0\! \Ext_A^2(M,M) \to \ _0\! \Ext_R^2(M,M)\,)\, , \ \ {\rm cf. \
  \eqref{specseq}}\, . $$ Clearly $\ext^2(M,M) \le \ _0\! \ext_A^2(M,M), $ and
{\em note} that we below may replace $\GradAlg(H)$ with $ \Hi ^p(\PP^{n})$ if
$n-c \ge 1$, cf. the text accompanying \eqref{Grad} for explanations and
notations.

\begin{theorem} \label{Amodulethm3} Let $A=R/I$, $I=I_{t}(\cA)$ be a standard
  determinantal $k$-algebra, i.e. $(A) \in W_s(\underline{b};\underline{a})$,
  let $M = \coker \varphi^*$ and suppose \ $_0\!\Hom_A(M,M) \simeq k$ and
  $ _0\! \Ext_A^1(M,M) = 0$. Then 
 $$\dim   W_s(\underline{b};\underline{a}) =  \lambda:=\lambda_c +
 K_3+K_4+...+K_c \ , 
 $$cf.\! Remark \ref{dimWcA}. Moreover, for the codimension of $
 W_s(\underline{b};\underline{a})$
 in $\GradAlg(H)$ in a neighborhood of $(A)$ we have
 \begin{equation*} \label{codims} \dim_{(A)} \GradAlg(H) - \dim
   W_s(\underline{b};\underline{a}) \le \ \dim \ _0\!\Hom(I,A)   - \lambda  \le
   \ext^2(M,M)\, ,
 \end{equation*}
 where the first inequality is an equality if and only if $\GradAlg(H)$ is
 smooth at $(A)$. In particular these conclusions hold if \ $\dim A \ge 3
 +\dim R/I_{t-1}(\cA)$, {\rm \underline{or}} if \ $\dim A \ge 2$ and $\dim
 R/I_{t-1}(\cB) =0$ where ${\cB}$ is obtained from $\cA$ by deleting some
 column of ${\cA}$ (e.g. if \ $\dim A \ge 2$, $a_{i-2} \ge b_i$ for $2\le i\le
 t$ and $A$ is general).
\end{theorem}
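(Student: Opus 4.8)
The plan is to derive the dimension formula and the codimension bound directly from Theorem~\ref{compthm} and Theorem~\ref{modulethm}, and then to reduce the two concrete sufficient conditions to a vanishing statement for $ _0\!\Ext_A^1(M,M)$ via the local freeness of $\tilde M$ off $V(J)$, $J=I_{t-1}(\cA)$. First, assuming $ _0\!\Hom_A(M,M)\simeq k$ and $ _0\!\Ext_A^1(M,M)=0$, Theorem~\ref{compthm}(i) gives ${\rm Def}_{A\in W_s(\underline b;\underline a)}\simeq {\rm Def}_{M/R}$, which is formally smooth and pro-representable; combined with Theorem~\ref{modulethm} and the fact that $W_s(\underline b;\underline a)$ is (locally at $(A)$) a quotient of an open subset of $\VV=\Hom_R(G^*,F^*)$ whose completed local ring is the pro-representing object $H(M/R)$, this yields $\dim W_s(\underline b;\underline a)=\dim\, _0\!\Ext_R^1(M,M)=\lambda$.

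Next, for the codimension estimate, $ _0\!\Hom(I,A)$ is the tangent space of ${\rm Def}_{A/R}$, hence (under \eqref{Grad}) of $\GradAlg(H)$ at $(A)$, so $\dim_{(A)}\GradAlg(H)\le \dim\, _0\!\Hom(I,A)$ with equality exactly when $\GradAlg(H)$ is smooth at $(A)$; this gives the first inequality after subtracting $\lambda=\dim W_s(\underline b;\underline a)$. For the second inequality I would run the $5$-term exact sequence \eqref{specseq} in degree zero: the image of $\delta_0(M)$ has dimension $\dim\, _0\!\Ext_R^1(M,M)-\dim\, _0\!\Ext_A^1(M,M)=\lambda-0=\lambda$, while by Lemma~\ref{specFitt} this image lands inside $ _0\!\Hom_R(I,A)\hookrightarrow E_2^{0,1}$; so $\dim\, _0\!\Hom(I,A)-\lambda$ is at most the dimension of $\coker\bigl( _0\!\Hom(I,A)\to E_2^{0,1}\bigr)$, which by exactness of \eqref{specseq} injects into $\ker( _0\!\Ext_A^2(M,M)\to\, _0\!\Ext_R^2(M,M))=\ext^2(M,M)$. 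That is the bound.

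Finally, for the two sufficient conditions I would show they both force $ _0\!\Ext_A^1(M,M)=0$ (the hypothesis $ _0\!\Hom_A(M,M)\simeq k$ being handled in parallel, and in fact following from the same local-cohomology/depth considerations since $\tilde M$ is a rank-one sheaf locally free off a set of large codimension). The key point is that $\tilde M$ and $\widetilde{I/I^2}$ are locally free on $X-V(J)$ with $\depth_J A\ge\codim_X Sing(X)$, so $\Ext_A^1(M,M)$ is supported on $V(J)$ and one can bound its graded pieces — or rather show the degree-zero piece vanishes — by a local duality / depth argument: $ _0\!\Ext_A^1(M,M)$ sits in a spectral sequence converging to local cohomology $H^*_{\goth m}$ of $\mathcal Hom$-sheaves, and when $\dim A\ge 3+\dim R/I_{t-1}(\cA)$ the relevant $H^i_{\goth m}$ vanish in the range producing $\Ext_A^1$. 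The variant with $\dim A\ge 2$ and $\dim R/I_{t-1}(\cB)=0$ for some one-column deletion $\cB$ of $\cA$ is the delicate one: here one uses the flag $X=X_c\subset X_{c-1}$ and the sequence \eqref{Di}, together with Remark~\ref{dep} (with $\alpha=3$, or $\alpha=2$ under $a_{i-2}\ge b_i$, the latter being automatic for general $A$) to guarantee that $X\hookrightarrow Y=X_{c-1}$ is an l.c.i.\ outside a set of codimension $\ge\min(2,c)$ in $X$, which is exactly what is needed to kill $ _0\!\Ext_A^1(M,M)$. \textbf{The main obstacle} I anticipate is precisely this last reduction — verifying that the stated numerical hypotheses on $\dim A$ and the codimension of the submaximal-minor locus translate, via Remark~\ref{dep} and the Buchsbaum-Rim/Eagon-Northcott machinery, into the exact vanishing $ _0\!\Ext_A^1(M,M)=0$ (and $ _0\!\Hom_A(M,M)\simeq k$) without extra positivity assumptions; the rest is a formal consequence of the deformation-theoretic comparison already established.
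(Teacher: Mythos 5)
Your proposal follows essentially the same route as the paper: the dimension formula comes from Theorem~\ref{compthm}(i), the first inequality from the tangent-space estimate $\dim_{(A)}\GradAlg(H)\le\dim\,_0\!\Hom_R(I,A)$ (with equality iff smooth), the second from the degree-zero five-term sequence \eqref{specseq} combined with the injection $_0\!\Hom_R(I,A)\hookrightarrow (E_2^{0,1})_0$ coming from $A\hookrightarrow\Hom_A(M,M)$, and the sufficient conditions reduce, exactly as in the paper (which delegates this to \cite[Thm.\ 5.5]{K10} and Remark~\ref{dep} via $\depth_{I_{t-1}(\cA)A}A=\dim A-\dim R/I_{t-1}(\cA)$), to depth conditions forcing $_0\!\Hom_A(M,M)\simeq k$ and $_0\!\Ext_A^1(M,M)=0$. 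One wording slip worth fixing: the quantity $\dim\,_0\!\Hom_R(I,A)-\lambda$ is the dimension of $_0\!\Hom_R(I,A)/\im\delta_0$, which injects into $\coker\delta_0\cong\ext^2(M,M)$ --- it is not bounded by $\dim\coker\bigl(\,_0\!\Hom_R(I,A)\to (E_2^{0,1})_0\bigr)$ as written; equivalently one can simply use $\dim\,_0\!\Hom_R(I,A)\le\dim (E_2^{0,1})_0=\lambda+\ext^2(M,M)$.
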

\begin{proof} The $1^{\rm st}$ conclusion follows from
  Theorem~\ref{compthm}(i). For the statements on the codimension one knows
  that $ \dim_{(A)} \GradAlg(H) \le \dim\, _0\! \Hom_R(I,A)$ with equality if
  and only if $A$ is unobstructed (\cite[Thm.\! 1.5]{K79}). Using
  \eqref{specseq}, \eqref{lamext} and the injection
  $A \hookrightarrow\ _0\! \Hom_A(M,M)$, it follows that
  $ _0\! \Hom_R(I,A)-\lambda \le \dim\ (E_2^{0,1})_0 - \lambda = \ext^2(M,M)$,
  and we get the next conclusion of Theorem~\ref{Amodulethm3}. Now since
  $\depth_{I_{t-1}(\cA)A}A = \dim A -\dim R/I_{t-1}(\cA)$ and $A$ good
  determinantal is equivalent to $\depth_{I_{t-1}(\cA)A}A \ge 1$, the final
  sentence follows directly from \cite[Theorem 5.5]{K10}, and the parenthesis
  from Remark~\ref{dep}.
\end{proof}

There are many artinian determinantal rings satisfying the conditions
$_0\!\Hom_A(M,M) \simeq k$ and $ _0\! \Ext_A^1(M,M)= 0$ of
Theorem~\ref{Amodulethm3}. After having computed many examples using Macaulay
2 the general picture when $a_0 > b_t$ seems to be that the {\it only}
artinian determinantal rings $A=R/I_{t}(\cA)$ that do not satisfy these
conditions are those with matrix $\cA$ that is linear except in one column
${v} \in R_m^{\oplus t}$ where the degree is $m \ge 2$, so $\cA$ is of the
form $[\cB,{v}]$ where $\cB$ is linear. The following example avoids this
case, but $\cB$ is otherwise quite close to being linear (the case where $\cB$
is linear will be considered in Example~\ref{Blin}). Since the codimension-2
case is straightforward in this context, the first non-trivial case is $c=3$.
We include a codimension-4 example to see that it can be treated similarly.

\begin{example} \label{bo} (determinantal artinian quotients of $R$, using
  Theorem~\ref{Amodulethm3})

  (i) Let $R=k[x_0,x_1,x_2]$ and let $\cA= [\cB,v]$ be a general $2 \times 4$
  matrix with linear (resp. quadratic) entries in the first and second (resp.
  third) column and where the entries of $v$ are polynomials of the same
  degree $m$, i.e. the degree matrix of $\cA$ is
  $\left(\begin{smallmatrix}1 & 1  & 2  & m\\
      1 & 1 & 2 & m\end{smallmatrix}\right)$.
  The vanishing of all $2 \times 2$ minors defines an artinian ring with
  $h$-vector $(1,3,5,5,..,5,5,3)$, where the number of $5s$ is $m-1$. For
  $m \ge 2$ one verifies that the first conditions of
  Theorem~\ref{Amodulethm3}, i.e. that all conditions of
  Theorem~\ref{compthm}(i) hold, and it follows that
  $\overline{ W_s(\underline{b};\underline{a})}$ is an irreducible subset of
  $\GradAlg(H)$ of dimension $\lambda_3 + K_3$ which is $14$ for $m \ge 3$ (or
  one may use \eqref{dimWH} to find the dimension). For $m \ge 5$ one shows
  that %$_0\! \Ext_A^1(I/I^2,A)=0$ and
  $_0\! \hom_A(I/I^2,A)=16$, %whence $A$ is unobstructed,
  and we get that the codimension of
  $\overline{ W_s(\underline{b};\underline{a})}$ in $\GradAlg(H)$ is at most 2
  by
  Theorem~\ref{Amodulethm3}. %and that $\GradAlg(H)$ is generically smooth along
  %$\overline{ W_s(\underline{b};\underline{a})}$. 
  The dimensions of the $_0\! \Ext^i$-groups above are computed by using
  Macaulay 2 (over the finite fields $\ZZ_{101}$, $\ZZ_{701}$ and
  $\ZZ_{3001}$), and strictly speaking only for $m \le 10$, from which we
  clearly see the general pattern also for $m > 10$. Note that the dimensions
  we found of the $_0\! \Ext^i$-groups are independent of the characteristic
  of the fields and since field extensions are flat, they remain unchanged
  over any field containing one of these three fields. So the conclusions
  above hold at least when $k$ is the algebraic closure of $\ZZ_{p}$,
  $p \in \{101,701,3001 \}$.

  (ii) Let $R=k[x_0,x_1,x_2, x_3]$ and let $\cA= [\cB,v]$ be a general $2
  \times 5$ matrix %with linear (resp. quadratic) entries in the first, second
  %and third (resp. fourth) column and where the entries of $v$ are polynomials
  %of the same degree $m$. 
  with degree matrix
  $\left(\begin{smallmatrix}1 & 1 & 1  & 2  & m\\
      1 & 1 & 1 & 2 & m\end{smallmatrix}\right)$.
  The vanishing of all $2 \times 2$ minors defines an artinian ring with
  $h$-vector $(1,4,7,7,..,7,4)$, where the number of $7s$ is $m-1$. For
  $m \ge 2$ one verifies that all conditions of Theorem~\ref{compthm}(i) hold,
  and we get that $\overline{ W_s(\underline{b};\underline{a})}$ is an
  irreducible subset of $\GradAlg(H)$ of dimension $\lambda_4 + K_3 +K_4$,
  which is equal to $25$ if $m \ge 3$ (or one may use \eqref{dimWH} to find
  the dimension). For $m \ge 5$, %$_0\! \Ext_A^1(I/I^2,A)=0$ and
  $_0\! \hom_A(I/I^2,A)=33$, whence the codimension of
  $\overline{ W_s(\underline{b};\underline{a})}$ in $\GradAlg(H)$ is at most 8
  by
  Theorem~\ref{Amodulethm3}. %and that $\GradAlg(H)$ is generically smooth along
 % $\overline{ W_s(\underline{b};\underline{a})}$. 
  The computations of the $_0\! \Ext^i$-groups are verified by Macaulay 2 for
  $m \le 10$ over the fields $\ZZ_{101}$, $\ZZ_{701}$ and $\ZZ_{3001}$, but
  their dimensions are the same for any field containing one of these fields
  %of these three  fields 
  and for $m > 10$.
\end{example}

\begin{remark} \label{mac22} In all examples of this section we have verified
  the dimensions of those $ _0\! \Ext_A^i(M,M)$ and $_0\! \Ext_A^i(I/I^2,A)$,
  $i \ge0$ we need to apply Theorems~\ref{Amodulethm3}, ~\ref{Amodulethm5} and
  ~\ref{compthmvar} by using Macaulay 2 over each of the ground fields
  $\ZZ_{p}$ corresponding to $p = 101,701$ and $p = 3001$. But we have also
  computed these dimensions for many other values of $p > 10$, and it seems
  that they are independent of the characteristic of the field.
\end{remark}

Sometimes Theorem \ref{Amodulethm3} allows us to find the exact codimension of
$\overline{ W_s(\underline{b};\underline{a})}$ in $\GradAlg(H)$.
\begin{example} \label{boij}
(determinantal artinian quotients of $R=k[x_0,x_1,x_2]$, using
 Theorem~\ref{Amodulethm3})

 (i) Let $\cA$ be a general $2 \times 4$ matrix with quadratic (resp. linear)
 entries in the first (resp. second) row, i.e. with degree matrix   $\left(\begin{smallmatrix}2 & 2  & 2  & 2\\
     1 & 1 & 1 & 1\end{smallmatrix}\right)$.
 The vanishing of all $2 \times 2$ minors defines an artinian ring with
 $h$-vector $(1,3,6,4,1)$. Macaulay 2 computations (cf. Remark \ref{mac22})
 % (over the fields  $\ZZ_{p}$, $p \in \{101,701,3001 \}$) 
 show that all conditions of Theorem~\ref{compthm}(i) hold,
%(and that $ _0\! \ext_A^2(M,M)=10$), 
 and it follows that $\overline{ W_s(\underline{b};\underline{a})}$ is an
 irreducible subset of $\GradAlg(H)$ of dimension $\lambda_3 + K_3=16$.
 Moreover we get that $A$ is unobstructed by \eqref{GradGenCI}. Indeed both
 $_0\! \Ext_A^1(I/I^2,A)=0$ (verified by Macaulay 2) and
 $ _0\!\Hom_A({\rm H}_2(R,A,A),A)=0$; the latter because
 $ _v{\rm H}_2(R,A,A) \hookrightarrow S_2(I)_v =0$ for $v < 6$ and the socle
 degree of $A$ is 4. %Then we conclude the claim by \eqref{GradGenCI}.
 Since $_0\! \hom_A(I/I^2,A)=20$ by Macaulay 2, we get by
 Theorem~\ref{Amodulethm3} that
 $\codim_{\GradAlg(H)}\overline{ W(\underline{b};\underline{a})}=4$.

 (ii) Again $R=k[x_0,x_1,x_2]$, but now $\cA$ is a general $3 \times 5$ matrix
 with quadratic (resp. linear) entries in the first (resp. second and third)
 row. The vanishing of all $3 \times 3$ minors defines an artinian ring with
 $h$-vector $(1,3,6,10,5,1)$. Using Macaulay 2 
 % (with ground fields $\ZZ_{p}$, $p \in \{101,701,3001 \}$)
one verifies that all conditions of
 Theorem~\ref{compthm}(i) hold, and we get that
 $\overline{ W_s(\underline{b};\underline{a})}$ is an irreducible subset of
 $\GradAlg(H)$ of dimension $\lambda_3 + K_3=25$. Since
 $_0\! \Ext_A^1(I/I^2,A)=0$ by Macaulay 2 and
 $_v{\rm H}_2(R,A,A) \hookrightarrow S_2(I)_v =0$ for $v < 8$, we get
 $ _0\!\Hom_A({\rm H}_2(R,A,A),A)=0$, whence %and it follows from
 $A$ is unobstructed by \eqref{GradGenCI}. Then Theorem~\ref{Amodulethm3} and
 $_0\! \hom_A(I/I^2,A)=40$ (by Macaulay 2) imply that
  %$\GradAlg(H)$ is generically smooth along $\overline{
  %  W_s(\underline{b};\underline{a})}$ and that
  $\codim_{\GradAlg(H)}\overline{ W(\underline{b};\underline{a})}=15$.
\end{example}

\begin{remark} Another possible way of finding the dimension of
  $\overline{ W_s(\underline{b};\underline{a})}$ and its codimension in
  $\GradAlg(H)$, as well as determining whether $\GradAlg(H_A)$ is smooth at
  $(A) \in \overline{ W_s(\underline{b};\underline{a})} \subset
  \GradAlg(H_A)$,
  is to consider some closely related and well-understood family of
  $\GradAlg(H_B)$ containing a point $(B)$ for which there is a surjection
  $B \to A$ of $k$-algebras whose Hilbert functions satisfy $H_B(v)=H_A(v)$
  for every $v \le s$ and $s$ sufficiently
  large. %close to the socle degree of $A$.
  Such a comparison is done in \cite{K09} when $\dim A \ge 1$, using
  $B=R/I_t(\cB)$, $A=R/I_t(\cA)$ and $\cA= [\cB,v]$, $v$ a column,
  cf.\;Example \ref{bo}. In \cite{K07} we explore this approach more
  generally, %there more general quotients $B \to A$,
  e.g. see \cite[Thm.\;29]{K07} for a result where $A$ is artinian in
  $B \to A$.

  In the case $B=R/I_t(\cB)$ where $\cA= [\cB,v]$ and $v$ a column, we
  generalize in Sec.\;6 of this paper several results from \cite{K09}, see in
  particular Theorem \ref{Dim0Thm} where $\dim A = 1$. It is possible to
  extend this theorem to $A$ artinian by increasing the set of assumptions in
  Theorem \ref{Dim0Thm}, but since this theorem is already quite
  complicated as stated, we think Theorem \ref{Amodulethm3} is much simpler.
  %, especially for $A$  artinian. 
  It is also an option to replace $B=R/I_t(\cB)$ by an artinian Gorenstein
  quotient $A_g$ of $B$ such that $B \to A$ factors via $A_g \to A$ and, for
  $c=3$, use either \cite[Thm.\;2.3]{K98} or \cite[Sec. 5]{CV} to compute e.g.
  $\dim_{(A_g)} \GradAlg(H_{A_g})$. Applying, however, \cite[Thm.\;1]{K04} to
  $B \to A_g$ which states that $\GradAlg(H_{A_g})$ and $\GradAlg(H_{B})$ are
  much related, especially when $c=3$ by \cite[Example 28]{K04}, this latter
  approach is quite similar (concerning assumptions to be fulfilled) to the
  approach using $R/I_t(\cB) \to A$.
  % set of assumptions as a generalized version of Theorem \ref{Dim0Thm} to
  % $A$
  % artinian will contain, for solving our dimension and smoothness problems.
  % $\ _0\! \Ext_A^1(I/I^2,A)$
 \end{remark}

We have the following generalization of \cite[Theorem\! 5.8]{K10} in which the
good determinantal assumption ($\depth_{I_{t-1}(\cA)A}A \ge 1$) is weakened,
allowing $A$ to be artinian. {\em Note} that we below may replace
$\GradAlg(H)$ with $ \Hi ^p(\PP^{n})$ if $\dim A \ge 2$, cf. \eqref{Grad}.

\begin{theorem} \label{Amodulethm5} Let $A=R/I$, $I=I_{t}(\cA)$ be a standard
  determinantal $k$-algebra, i.e. $(A) \in W_s(\underline{b};\underline{a})$,
  let $M = \coker \varphi^*$ and suppose \ $_0\!\Hom_A(M,M) \simeq k$ and \
  $ _0\! \Ext_A^i(M,M) = 0$ for $i=1$ and $2$. Then $\GradAlg(H)$ is smooth at
  $(A)$ and 
 $$\dim_{(A)} \GradAlg(H) =  \lambda_c + K_3+K_4+...+K_c \ , $$
 (cf.\! Remark \ref{dimWcA}). Moreover
 $\overline {W_s(\underline{b};\underline{a})} \subset \GradAlg(H)$ is an
 irreducible component, and every deformation of $A$ comes from deforming
 $\cA$. In particular these conclusions hold
 %if  $\depth_{I_{t-1}(\cA)A}A \ge 4$, {\rm \underline{or}} 
 if $\dim A \ge 4 +\dim R/I_{t-1}(\cA)$, {\rm \underline{or}} if \ $\dim A \ge
 3$ and $\dim R/I_{t-1}(\cB) =0$ where ${\cB}$ is obtained from $\cA$ by
 deleting some column of ${\cA}$ (e.g. if \ $\dim A \ge 3$, $a_{i-\min
   (3,t)}\ge b_{i}$ for $\min (3,t)\le i \le t$ and $A$ is general).
\end{theorem}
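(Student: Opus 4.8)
The plan is to derive Theorem~\ref{Amodulethm5} almost formally from Theorem~\ref{compthm}(ii) together with Theorem~\ref{modulethm}, and to handle the geometric conclusions using the algebraization argument already invoked in the proof of Theorem~\ref{compthm}. First I would apply Theorem~\ref{compthm}(ii): under the hypotheses $_0\!\Hom_A(M,M)\simeq k$ and $_0\!\Ext_A^i(M,M)=0$ for $i=1,2$, we get ${\rm Def}_{M/R}\simeq {\rm Def}_{A\in W_s(\underline b;\underline a)}\simeq {\rm Def}_{A/R}$ and that ${\rm Def}_{A/R}$ is formally smooth. Since ${\rm Def}_{A/R}$ is, up to the usual comparison \eqref{Grad}, the local deformation functor of $\GradAlg(H)$ at $(A)$ (or of $\Hi^p(\PP^n)$ if $\dim A\ge 2$), formal smoothness of ${\rm Def}_{A/R}$ says exactly that $\GradAlg(H)$ is smooth at $(A)$. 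The dimension of the tangent space is then $\dim\, _0\!\Ext_R^1(M,M)=\lambda_c+K_3+\dots+K_c$ by Theorem~\ref{modulethm} (using $_0\!\Hom_A(M,M)\simeq k$), whence $\dim_{(A)}\GradAlg(H)=\lambda_c+K_3+\dots+K_c$.

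Next I would argue that $\overline{W_s(\underline b;\underline a)}$ is an irreducible component. By Proposition~\ref{WWs} it is irreducible, and by Theorem~\ref{compthm}(i) (whose hypotheses are contained in those here) its dimension is $\lambda_c+K_3+\dots+K_c$, which equals $\dim_{(A)}\GradAlg(H)$ just computed. A closed irreducible subset whose dimension equals the local dimension of the ambient scheme at one of its (smooth) points must be an irreducible component: more precisely, $\overline{W_s(\underline b;\underline a)}$ is contained in the unique component passing through the smooth point $(A)$, and since both have the same dimension and $\overline{W_s(\underline b;\underline a)}$ is closed, they coincide. That every deformation of $A$ (or $X$) comes from deforming $\cA$ is precisely the last assertion of Theorem~\ref{compthm}(ii), so nothing more is needed there.

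Finally I would verify that the stated sufficient conditions force the vanishing hypotheses, exactly as in the proof of Theorem~\ref{Amodulethm3}. The identity $\depth_{I_{t-1}(\cA)A}A=\dim A-\dim R/I_{t-1}(\cA)$ shows that $\dim A\ge 4+\dim R/I_{t-1}(\cA)$ gives $\depth_{I_{t-1}(\cA)A}A\ge 4$, and combined with the local freeness of $\widetilde M$ and $\widetilde{I/I^2}$ on $X-V(I_{t-1}(\cA))$ this yields, via the spectral sequence and a standard local-cohomology/depth estimate, that the relevant $_0\!\Ext_A^i(M,M)$ vanish for $i\le 2$ and that $_0\!\Hom_A(M,M)\simeq k$; this is the content of \cite[Theorem~5.8]{K10} whose proof we are extending. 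The alternative hypothesis $\dim A\ge 3$ with $\dim R/I_{t-1}(\cB)=0$ for some one-column-deleted submatrix $\cB$ is handled by applying the same estimate to the flag $X=X_c\subset X_{c-1}$, using \eqref{Di} and Remark~\ref{dep} (with $\alpha=3$) to control the singular locus; and the concrete case ($a_{i-\min(3,t)}\ge b_i$, $A$ general) is then exactly Remark~\ref{dep}. The only real work is this last bookkeeping of which $\Ext$-groups vanish under which depth bound, and since it runs in parallel with the already-proved Theorem~\ref{Amodulethm3} and with \cite{K10}, I expect it to be routine rather than a genuine obstacle; the substantive input is entirely Theorem~\ref{compthm}(ii).
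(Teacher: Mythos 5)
Your proposal is correct and follows essentially the same route as the paper, whose proof simply reduces the statement to Theorem~\ref{compthm}(ii) (together with Theorem~\ref{modulethm} for the dimension), to \cite[Thm.~5.8]{K10} for the depth-based vanishing of the $\Ext$-groups, and to Remark~\ref{dep} for the ``in particular'' conditions. The only cosmetic difference is that you obtain the irreducible-component assertion by a dimension count at the smooth point $(A)$, whereas the paper gets it directly from ``every deformation of $A$ comes from deforming $\cA$'' via Lemma~\ref{unobst}; both are immediate consequences of Theorem~\ref{compthm}(ii).
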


\begin{proof} The first two sentences of conclusions follow from
  Theorem~\ref{compthm} and Lemma~\ref{unobst}. Moreover since
  $\depth_{I_{t-1}(\cA)A}A = \dim A -\dim R/I_{t-1}(\cA)$, the
  final sentence follows directly from \cite[Theorem 5.8]{K10}, and the
  parenthesis from Remark~\ref{dep}.
\end{proof}
The condition $ _0\! \Ext_A^2(M,M)= 0$ of Theorem~\ref{Amodulethm5} seems harder
to satisfy for artinian rings, but it holds quite often if $\cA$ is not too
close to the linear case. Here is an example:

\begin{example} \label{bocomp} (determinantal artinian quotients of
  $R=k[x_0,x_1,x_2]$, using Theorem~\ref{Amodulethm5})

  (i) Let $\cA= [\cB,v]$ be a general $2 \times 4$ matrix with linear (resp.
  cubic) entries in the first and second (resp. third) column and where the
  entries of $v$ are polynomials of the same degree $m$, $m \ge 3$. The degree
  matrix of $\cA$ is
  $\left(\begin{smallmatrix}1 & 1  & 3  & m\\
      1 & 1 & 3 & m\end{smallmatrix}\right)$.
  The vanishing of all $2 \times 2$ minors defines an artinian ring with
  $h$-vector $(1,3,5,7,7,..,7,5,3)$, where the number of $7s$ is $m-2$. For
  $m=3$ and $m \ge 5$ one verifies that all conditions of
  Theorem~\ref{compthm}(ii), i.e. the first conditions of
  Theorem~\ref{Amodulethm5} hold, and it follows that
  $\overline{ W_s(\underline{b};\underline{a})}$ is a generically smooth
  irreducible component of $\GradAlg(H)$ of dimension $\lambda_3 + K_3=18$
  (resp. $17$) for $m \ge 5$ (resp. $m=3$). The computations for the
  $_0\! \Ext^i$-groups above are verified by Macaulay 2 for $m \le 10$ (cf.
  Remark \ref{mac22}), but their dimensions hold also for $m > 10$. One also
  verifies that $\ _0\! \ext_A^1(I/I^2,A) = 2$ for $6 \le m \le 10$, so both
  the generic smoothness (cf. \eqref{GradGenCI}) along
  $\overline{ W_s(\underline{b};\underline{a})}$ and the conclusion that
  $\overline{ W_s(\underline{b};\underline{a})}$ is an irreducible component
  of $\GradAlg(H)$ would be hard to see without using
  Theorem~\ref{Amodulethm5}.

  (ii) Let $\cA$ be a general $2 \times 4$ matrix with degree matrix
  $\left(\begin{smallmatrix}3 & 3  & 3  & 3\\ 
      1 & 1 & 1 & 1\end{smallmatrix}\right)$ \ .
    %cubic (resp. linear) entries in  the first (resp. second) row. 
The vanishing of all $2 \times 2$ minors
  defines an artinian ring with $h$-vector $(1,3,6,10,9,7,3,1)$. Using
  Macaulay 2 one verifies that all conditions of Theorem~\ref{compthm}(ii)
  hold, (i.e. also $ _0\! \Ext_A^2(M,M)=0$), and we get that $\overline{
    W_s(\underline{b};\underline{a})}$ is a generically smooth irreducible
  component of $\GradAlg(H)$ of dimension $\lambda_3 +
  K_3=29$. % (or one may use
  %\eqref{dimWH} to find the dimension). 
  In this example $_0\! \Ext_A^1(I/I^2,A) \ne 0$ and $_0\!
  \hom_A(I/I^2,A)=29$, so it is not straightforward to see that $A$ is
  unobstructed, but it is, due to Theorem~\ref{Amodulethm5} which
  also contains additional information.
\end{example}

As indicated it seems that the {\it only} artinian determinantal rings
$A=R/I_{t}(\cA)$ in the case $a_0>b_t$ that do not satisfy the conditions of
Theorem~\ref{compthm}(i) are those with a matrix $\cA$ that is linear except
in one column ${v} \in R_m^{\oplus t}$ where the degree is $m \ge 2$. Moreover
if $m \ge 3$ we may even have
$$ _0\!\Hom_A(M,M) \simeq k \ , \ \  _0\! \Ext_A^1(M,M) \ne 0 \ {\rm and} \ \
_0\!\Ext_A^2(M,M) = 0\, ,$$
and the deformation functors of these rings are fully determined by our next
Theorem~\ref{compthmvar}. For {\em artinian linear} determinantal rings (i.e.
$m= 1$ above) all conditions of Theorem~\ref{compthm}(ii) seem to hold and
even more, they are rigid in several senses: $ _0\! \Hom_R(I,A)=0$, as well as
$ _0\! \Ext_R^1(M,M)= 0$, whence the linear case may be not so interesting.
Recalling \eqref{lamext}, i.e.
$\lambda := \dim \ _0\! \Ext_R^1(M,M) = \lambda_c + K_3+K_4+...+K_c$ and
Remark \ref{dimWcA}, we get

\begin{theorem} \label{compthmvar} Let $A=R/I$, $I=I_{t}(\cA)$ be a standard
  determinantal $k$-algebra, let $M = \coker \varphi^*$ and suppose \
  $_0\!\Hom_A(M,M) \simeq k$. If the map
  $ _0\! \Ext_A^2(M,M) \to \ _0\! \Ext_R^2(M,M)$ of \eqref{specseq} is
  injective, or weaker, if the tangent map
  $\ _0\! \Ext_R^1(M,M)\rightarrow \ _0\! \Hom_R(I,A)$ of the natural morphism
  of functors $e_M:{\rm Def}_{M/R} \rightarrow {\rm Def}_{A/R}$
 is surjective, then $e_M$ is smooth. Moreover ${\rm Def}_{M/A}$
  is smooth and its pro-representing object $H(M/A)$ satisfies
$$ \dim H(M/A)= \dim \, _0\! \Ext_A^1(M,M)= \lambda - \dim
W_s(\underline{b};\underline{a})\, .$$ Furthermore $ {\rm Def}_{A \in
  W_s(\underline{b};\underline{a})} = {\rm Def}_{A/R}$, $ {\rm Def}_{A/R}$ is
smooth, $\overline{ W_s(\underline{b};\underline{a})} \subset \GradAlg(H)$ is
an irreducible component of dimension $\lambda - \dim \, _0\! \Ext_A^1(M,M)$
and every deformation of $A$ comes from deforming $\cA$. In particular these
conclusions hold if $\dim A \ge 3 + \min\{\dim R/I_{t-1}(\cA)+1,\dim
R/I_{t-1}(\cB)\} $ where ${\cB}$ is obtained from $\cA$ by deleting some
column of ${\cA}$ (e.g. if \ $\dim A \ge 3$, $a_{i-\min (3,t)}\ge b_{i}$ for
$\min (3,t)\le i \le t$ and $A$ is general).
% (cf.\! Definition~\ref{everydef}).
  \end{theorem}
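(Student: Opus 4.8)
The plan is to leverage Theorem~\ref{compthm} together with the 5-term exact sequence \eqref{specseq} and a smoothness criterion for morphisms of deformation functors. The hypothesis $_0\!\Hom_A(M,M)\simeq k$ already guarantees pro-representability of ${\rm Def}_{M/R}$ (Theorem~\ref{modulethm}) and well-definedness of $e_M$; the new input is only the injectivity of $_0\!\Ext_A^2(M,M)\to{}_0\!\Ext_R^2(M,M)$, which in the notation of the previous section means $\ext^2(M,M)=0$. First I would establish smoothness of $e_M:{\rm Def}_{M/R}\to{\rm Def}_{A/R}$ by the standard infinitesimal lifting criterion: given a small artinian surjection $T\to S$ and a deformation $M_S$ of $M$ over $S$ whose image $A_S$ lifts to $A_T$, I must lift $M_S$ to some $M_T$ over $T$ mapping to $A_T$. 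Since ${\rm Def}_{M/R}$ is formally smooth (Theorem~\ref{modulethm}), $M_S$ lifts to some $M_T'$ over $T$; the obstruction to matching it with $A_T$ lives in a group controlled by $\ker(_0\!\Ext_A^2(M,M)\to{}_0\!\Ext_R^2(M,M))$, which vanishes by hypothesis, so after modifying $M_T'$ by an element of $_0\!\Ext_R^1(M,M)$ (using formal smoothness again) we get the required lift. This gives smoothness of $e_M$.

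Next I would identify the fiber functor ${\rm Def}_{M/A}$ (deformations of $M$ as an $A$-module with $A$ fixed, i.e. the fiber of $e_M$) and compute its tangent and obstruction spaces. Its tangent space is $_0\!\Ext_A^1(M,M)$ and its obstruction space embeds into $_0\!\Ext_A^2(M,M)$; but the relevant obstructions that survive in the fiber are exactly those killed in $_0\!\Ext_R^2(M,M)$, so under the hypothesis ${\rm Def}_{M/A}$ is formally smooth and hence pro-representable (again using $_0\!\Hom_A(M,M)\simeq k$ to lift automorphisms). Smoothness of $e_M$ with smooth source ${\rm Def}_{M/R}$ then forces ${\rm Def}_{A/R}$ to be smooth, and additivity of dimensions along a smooth morphism with smooth fiber gives $\dim H(M/R)=\dim{\rm Def}_{A/R}+\dim H(M/A)$. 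By Theorem~\ref{modulethm} the left side is $\lambda$, and $\dim{\rm Def}_{A/R}=\dim_{(A)}\GradAlg(H)=\dim W_s(\underline b;\underline a)$ once we know $\overline{W_s(\underline b;\underline a)}$ is a component (see below), whence $\dim H(M/A)=\lambda-\dim W_s(\underline b;\underline a)$, matching the claimed $=\,_0\!\ext_A^1(M,M)$ via \eqref{homext} and the exact sequence \eqref{specseq} (here the map $_0\!\Ext_R^2(M,M)$ being hit injectively makes $_0\!\Ext_A^1(M,M)=\operatorname{coker}(\delta_0)$ measurable).

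For the component statement, I would argue as in the proof of \cite[Thm.~5.8]{K10} and the proof of Theorem~\ref{compthm}: ${\rm Def}_{A\in W_s(\underline b;\underline a)}\hookrightarrow{\rm Def}_{A/R}$ is a natural subfunctor through which $e_M$ factors, and since $e_M$ is already smooth and surjective onto ${\rm Def}_{A/R}$, the subfunctor must equal ${\rm Def}_{A/R}$; both are then smooth and isomorphic, so every deformation of $A$ comes from deforming $\cA$ (Definition~\ref{everydef}), and by Lemma~\ref{unobst} $\overline{W_s(\underline b;\underline a)}$ is an irreducible component of $\GradAlg(H)$, necessarily of the stated dimension. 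Algebraizability of $H(M/R)$ and the existence of the universal family $A_S=R_S/I_t(\cA_S)$ extending over an open set of $W_s(\underline b;\underline a)$ is taken verbatim from the proof of \cite[Thm.~5.2]{K10} (as already invoked in the proof of Theorem~\ref{compthm}).

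Finally, for the explicit numerical criterion I would invoke Remark~\ref{dep}: taking $\alpha=3$ there gives $\codim_{X_j}Sing(X_j)\ge\min\{5,j+2\}$, and combined with $\depth_{I_{t-1}(\cA)A}A=\dim A-\dim R/I_{t-1}(\cA)$ (and the analogous estimate for $\cB$) one bounds the depths that force the vanishing of the relevant $_0\!\Ext_A^i(M,M)$ and the injectivity of $_0\!\Ext_A^2(M,M)\to{}_0\!\Ext_R^2(M,M)$ by local duality / the Buchsbaum--Rim resolution, exactly as in Theorem~\ref{Amodulethm5}; the hypothesis $\dim A\ge 3+\min\{\dim R/I_{t-1}(\cA)+1,\dim R/I_{t-1}(\cB)\}$ is precisely what makes one of the two alternatives succeed. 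The main obstacle I anticipate is the smoothness of $e_M$ itself: one must show that the obstruction to simultaneously lifting $M$ and fixing its image $A$ is captured by $\ker(_0\!\Ext_A^2(M,M)\to{}_0\!\Ext_R^2(M,M))$ rather than by all of $_0\!\Ext_A^2(M,M)$, which requires a careful compatibility argument between the obstruction theory of ${\rm Def}_{M/R}$, that of ${\rm Def}_{A/R}$, and the spectral sequence $E_2^{p,q}=\Ext_A^p(\Tor_q^R(A,M),M)\Rightarrow\Ext_R^{p+q}(M,M)$ — essentially upgrading Lemma~\ref{specFitt} from tangent spaces to the obstruction level.
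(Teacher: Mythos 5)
Your proposal follows essentially the same route as the paper's proof: lift $M_S$ to $T$ using the unobstructedness of ${\rm Def}_{M/R}$, then correct the lifting by an element of $\ _0\!\Ext_R^1(M,M)\otimes_k{\mathfrak a}$ using the surjectivity of the tangent map of $e_M$, which is exactly what Lemma~\ref{specFitt} combined with the five-term sequence \eqref{specseq} yields under the injectivity hypothesis; the fiber-functor, dimension, component and algebraization steps also match the paper's (the last being delegated, as the paper itself essentially does, to the argument of \cite[Thm.\ 5.2]{K10}). The ``main obstacle'' you anticipate --- upgrading Lemma~\ref{specFitt} to the obstruction level --- is in fact not needed: since ${\rm Def}_{M/R}$ is already smooth, the whole comparison takes place at the tangent level (the cokernel of $e_M(D)$ embeds in $\ker(\ _0\!\Ext_A^2(M,M)\to\ _0\!\Ext_R^2(M,M))$ by \eqref{specseq} and Lemma~\ref{specFitt}), precisely as in the first paragraph of your own sketch.
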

  \begin{remark} We may consider ${\rm Def}_{M/A}$ as a fiber functor of
    $e_M:{\rm Def}_{M/R} \rightarrow {\rm Def}_{A/R}$, using trivial
    deformations of $A$. For a description of the obstruction maps of
    ${\rm Def}_{M/A}$ %and ${\rm Def}_{M/R}$
    for arbitrary $M$, see \cite{L86, IleTr}. Note that the assumption
    $_0\!\Hom_A(M,M) \simeq k$ implies the pro-representability of both
    ${\rm Def}_{M/A}$ and ${\rm Def}_{M/R}$, cf. end of proof of
    Theorem~\ref{modulethm}.
\end{remark}
\begin{proof}
  Firstly  we observe that the composition of the natural maps:
\begin{equation*}  _0\! \Ext_R^1(M,M)   \stackrel {e_M(D)}{
      \longrightarrow}  \ \! _0\! \Hom_R(I,A)
  \hookrightarrow \ _0\! \Hom_R(I,\Hom_A(M,M)) \simeq (E_2^{0,1})_0
\end{equation*}
described in Lemma~\ref{specFitt} is just the degree zero part of the map
$ \Ext_R^1(M,M) \to E_2^{0,1}$ appearing in \eqref{specseq}. This composition is
surjective %in degree zero 
if and only if the $_0\! \Ext_R^2$-map of Theorem
\ref{compthmvar} is injective by \eqref{specseq}. Since
$ A \rightarrow \Hom_A(M,M)$ is injective, it follows that the injectivity of
this $_0\! \Ext_R^2$-map is equivalent to the surjectivity of
$e_M(D) :\  _0\! \Ext_R^1(M,M) \to \! _0\! \Hom_R(I,A)$ and
$ _0\! \Hom_R(I,A) \simeq \ _0\! \Hom_R(I,\Hom_A(M,M))$ being bijective.

 Let $T \rightarrow S$ be a small artinian surjection with kernel
${\mathfrak a}$. % (i.e. of local Artinian $k$-algebras with
    %residue fields $k$ whose kernel ${\mathfrak a}$ satisfies ${\mathfrak a}
   % \cdot {\mathfrak m}_T=0$). 
  To prove the (formal) smoothness of $e_M:{\rm Def}_{M/R} \rightarrow {\rm
    Def}_{A/R}$, we must by definition show that the induced map
$${\rm Def}_{M/R}(T) \rightarrow {\rm Def}_{M/R}(S) \times_{{\rm
    Def}_{A/R}(S)} {\rm Def}_{A/R}(T)$$
is surjective. Let $M_S$ be an arbitrary fixed deformation of $M$ to $S$
inducing $R_S/I_S \in {{\rm Def}_{A/R}(S)}$, let $R_T/I_T$ be a deformation of
$R_S/I_S$ to $T$ and let $e_{M_S}$ be the functor that takes a deformation
$M_T= \coker(\varphi^*)$ of $M_S$ onto the deformation of $R_S/I_S$ defined by
the maximal minors of a matrix representing $\varphi$. Note that since $M$ is
unobstructed, there exists a deformation $M_T'$ of $M_S$ to $T$ inducing a
deformation $R_T/I_T'$ of $R_S/I_S$. The difference ``$R_T/I_T -
R_T/I_T'$''  
sits in $ _0\! \Hom_R(I,A) \otimes_k {\mathfrak a}$ and since $e_{M_S}(T)
\otimes_T id_{\mathfrak a}$ is up to isomorphism equal to 
%$ _0\! \Ext_R^1(M,M) \otimes_k {\mathfrak a} \to \ _0\! \Hom_R(I,A) \otimes_k
%{\mathfrak a}$
the surjective map $e_M(D) \otimes_k {\mathfrak a}$ as one may see by
the arguments described in the first paragraph of the proof of
Lemma~\ref{specFitt}, $e_{M_S}(T) \otimes_T id_{\mathfrak a}$ is surjective.
Hence there exists an element in $ _0\! \Ext_R^1(M,M) \otimes_k {\mathfrak a}$
that we can ``add'' to $M_T'$ to get a deformation $M_T$ of $M_S$ that induces
$R_T/I_T$, i.e. $e_M$ %${\rm Def}_{M/R} \rightarrow {\rm Def}_{A/R}$
is smooth. It follows that ${\rm Def}_{A/R}$ is smooth. We also get that
${\rm Def}_{A \in W_s(\underline{b};\underline{a})}(T) \hookrightarrow {\rm
  Def}_{A/R}(T)$
is surjective for any $T \in {\rm ob}(\underline {\ell})$, and injective by
Definition~\ref{subf}, whence an equality, and since ${\rm Def}_{M/A}$ is a
fiber functor of ${\rm Def}_{M/R} \rightarrow {\rm Def}_{A/R}$, it follows
that ${\rm Def}_{M/A}$ is smooth. Hence
$ \dim H(M/A)= \dim \ _0\! \Ext_A^1(M,M)$, and we get the displayed dimension
formulas of Theorem~\ref{compthmvar} from \eqref{specseq} and the surjectivity
of $ _0\! \Ext_R^1(M,M) \to \ \! _0\! \Hom_R(I,A)$ provided
$ \dim W_s(\underline{b};\underline{a}) = \ _0\! \hom_R(I,A)$.

If we compare $ {\rm Def}_{A \in W_s(\underline{b};\underline{a})} = {\rm
  Def}_{A/R}$ with Definition~\ref{subf} we get that every deformation $A_T$
of $A$ to an artinian $T$ comes from deforming $\cA$. To see that we may
replace ``an artinian $T$'' by ``$T$ a local ring'' in this statement
(cf.\;Definition~\ref{everydef}) we pick $d:= \ _0\! \hom_R(I,A)$
% =\lambda-\dim H(M/A)$
elements of $\ _0\! \Ext_R^1(M,M)$ that map to 
linearly independent elements of $\ _0\! \Hom_R(I,A)$ via the tangent map $\
_0\! \Ext_R^1(M,M)\twoheadrightarrow \ _0\! \Hom_R(I,A)$ of $e_M$, and we let
$\eta_1,...,\eta_d \in \ _0\! \Hom(G^*,F^*)$ with presentation matrices
$\cA_1,...,\cA_{d}$ correspond to the $d$ elements we picked in $\ _0\!
\Ext_R^1(M,M)$, cf. the beginning of the proof of Lemma~\ref{specFitt} for a
similar set-up. Let $\cA_S := \cA+ s_1
\cA_1+...+s_{d}\cA_{d}$ %be a linear combination in the parameters $s_{i}$
and $S:=k[s_1,...,s_{d}]_{(s_1,...,s_{d})}$ where $k[s_1,...,s_{d}]$ is a
polynomial ring, and let $O:=\cO_{\GradAlg(H),(A)}$ be the local ring of
$\GradAlg(H)$ at the $k$-point $(A) \in \GradAlg(H)$, $k=\overline k$.
% corresponding defined functor having $ {\rm Def}_{A/R}$ as its local functor
% at $(A)$, there exists a morphism $\cO_{\GradAlg(H),(A)} \to S$.
Then the algebraic family $A_S:=R_S/I_t(\cA_S)$ is $S$-flat by
Lemma~\ref{defalpha} and e.g.\;using the explicit description of the
pro-representing object $H(A/R)$ of $ {\rm Def}_{A/R}$ appearing in the proof
of Thm.\;4.2.4 of \cite{L} %(or in \cite[Thm.\;2.3]{K79}),
to see that {\it any} deformation of $A_S \otimes_S S/(s_1,...,s_{d})^2$ to
$H(A/R)$ may serve as a versal lifting, we get that $H(A/R)$ equals $S$
with``universal object'' $A_S$ (up to completion, but notice that the versal
lifting is defined by polynomials, not power series, in $s_1,...,s_d$). Hence,
by the universal property of the representable functor that corresponds to
$\GradAlg(H)$, there is a morphism $O \to S$ whose completions are isomorphic
because both pro-represents ${\rm Def}_{A/R}$ on $\ell$ (\cite[(2,1) and
(2.8)]{S}). Thus $O \to S$ is injective, and since $O/{\mathfrak m}_O^n \simeq
S/{\mathfrak m}_S^n$ for every $n$, it follows that $O \to S$ is an
isomorphism, having $A_O:=A_S \otimes_S O$ as the pullback (considered as
scheme) of the universal object of $ \GradAlg(H)$ to $\Spec(O)$. This shows
that ``every deformation of $A$ comes from deforming $\cA$`` because the
universal property implies that all deformations are given by pullback to
$\Spec(T)$ (i.e. by taking tensor product of $O\to A_O$ via $O\to T$). Then
Lemma~\ref{unobst} implies that $\overline{ W_s(\underline{b};\underline{a})}
\subset \GradAlg(H)$ is a generically smooth irreducible component of
dimension $d$, and in particular we have $ \dim
W_s(\underline{b};\underline{a}) = \ _0\! \hom_R(I,A)$.

The argument for the statement in the final sentence is as in the proof of
Theorem~\ref{Amodulethm5} provided $\dim A \ge 4 + \dim
R/I_{t-1}(\cA)$. %where ${\cB}$ is obtained from $\cA$ by deleting some
%column of ${\cA}$. 
If, however, $\dim A \ge 3 + \dim R/I_{t-1}(\cB)$ then
$\depth_{I_{t-1}(\cB)}R/I_{t}(\cB) \ge 4$ and we get $ _0\! \Ext_A^2(M,M)=0$
by \cite[Thm.\! 4.5]{K10} and we are done.
\end{proof}

As a corollary to the $3^{\rm rd}$ paragraph of the proof, we get that if
every graded deformation of $A$ to any local artinian $k$-algebra
%$(T,{\mathfrak m}_T)$ (of finite type such that $({\mathfrak m}_T)^r =0$ for
%some $r \in \mathbb Z$ and $k=T/{\mathfrak m}_T$), if 
comes from deforming $\cA$, then every deformation of $A$ comes from deforming
$\cA$ (see Definition~\ref{everydef}). But the proof implies more:

\begin{corollary} \label{everydefT} Let $A=R/I_{t}(\cA)$ be a standard
  determinantal $k$-algebra. If every graded deformation $A_D$ of $A$ to the
  dual numbers $D:=k[\epsilon]/(\epsilon^2)$ comes from deforming $\cA$, or
  equivalently, if the tangent map of the natural morphism
  $e_M:{\rm Def}_{M/R} \rightarrow {\rm Def}_{A/R}$,
  $$e_M(D): {\rm Def}_{M/R}(D) = \ _0\! \Ext_R^1(M,M) \to {\rm Def}_{A/R}(D)=
  \ _0\! \Hom_R(I,A) \ , $$
  is surjective, then every deformation of $A$ comes from deforming
  $\cA$. %(see Definition~\ref{everydef.
\end{corollary} 

\begin{proof} Since every graded deformation of $M$ comes from deforming $\cA$
  by Lemma \ref{defalpha}, it is clear that ``every graded deformation of $A$
  to $D$ comes from deforming $\cA$'' is equivalent to the surjectivity of the
  tangent map $e_M(D)$. Then we get what we want from the proof appearing in
  the $3^{\rm rd}$ paragraph of Theorem~\ref{compthmvar} since that part of
  the proof only requires the surjectivity of $e_M(D)$ and not the condition
  $_0\!\Hom_A(M,M) \simeq k$.
\end{proof} 

Let us consider two particular cases of artinian rings:

\begin{example} \label{Blin} (determinantal artinian quotients of $R$, using
  Theorem~\ref{compthmvar})

  {\rm (i)} Let $R=k[x_0,x_1,x_2]$ and let $\cA= [\cB,v]$ be a general
  $2 \times 4$ matrix where $\cB$ is linear and the entries of $v$ are
  polynomials of the same degree $m \ge 2$. The vanishing of all $2 \times 2$
  minors defines an artinian ring with $h$-vector $(1,3,3,..,3)$ where the
  number of $3s$ is $m$. Using Macaulay 2 for $3 \le m \le 10$ one verifies
  (cf.\;Remark \ref{mac22}) that all conditions of Theorem~\ref{compthmvar}
  hold and that $\ _0\! \ext_A^1(M,M)=2$, and it follows that
  $\overline{ W_s(\underline{b};\underline{a})}$ is a generically smooth
  irreducible component of $\GradAlg(H)$ of dimension
  $\lambda_3 + K_3-\ _0\! \ext_A^1(M,M)=6$ %In this example $\ _0\!
  % \Ext_A^1(I/I^2,A)$ seems to vanish for $m \ne 3$ and the smoothness of
  % $\overline{ W_s(\underline{b};\underline{a})}$ could be found
  % by using this fact.
  (true also for $m>10$). If $m=2$, Theorem~\ref{compthmvar} does not apply
  because $\ _0\! \Ext_A^2(M,M) \ne 0$. In fact one may verify (Macaulay 2)
  that all inclusions of \eqref{specseq2} are strict, i.e. non-isomorphisms.

  {\rm (ii)} Let $R=k[x_0,x_1,x_2,x_3]$ and let $\cA= [\cB,v]$ be a general
  $2 \times 5$ matrix where $\cB$ is linear and the entries of $v$ are
  polynomials of the same degree $m \ge 2$. The vanishing of all $2 \times 2$
  minors defines an artinian ring with $h$-vector $(1,4,4..,,4)$, where the
  number of $4s$ is $m$. Using Macaulay 2 for $3 \le m \le 10$ one verifies
  that all conditions of Theorem~\ref{compthmvar} hold and that
  $\ _0\! \ext_A^1(M,M)=4$. By Proposition~\ref{autdim} and \eqref{lamda},
  $K_3=0, K_4=4 $ and $\lambda_4=12$, and it follows that
  $\overline{ W_s(\underline{b};\underline{a})}$ is a generically smooth
  irreducible component of $\GradAlg(H)$ of dimension
  $\lambda_4 + K_3+K_4-\ _0\! \ext_A^1(M,M)=12$ (true also for $m>10$).
\end{example}

\begin{remark} \label{ghext2} Comparing Theorem \ref{compthmvar} with Theorem
  \ref{Amodulethm5} we see that the main condition for
  $\overline{ W_s(\underline{b};\underline{a})}$ to be an irreducible
  component of $\GradAlg(H_A)$ is that $ _0\! \Ext_A^2(M,M)$ (or
  $\ext^2(M,M)$, cf. Theorem \ref{Amodulethm3}) vanishes; the condition
  $ _0\! \Ext_A^1(M,M)=0$ of Theorem \ref{Amodulethm5} is in this regard less
  important. An interesting observation to make to all examples of this
  section is that ghost terms in the minimal resolution of $A$ appear
  precisely when $_0\! \Ext_A^2(M,M) \ne 0$. Indeed for all $A$ in Examples
  \ref{bo} and \ref{boij} we have $_0\! \Ext_A^2(M,M) \ne 0$ (Macaulay 2) as
  well as ghost terms in the minimal resolution of $A$ while we in Examples
  \ref{bocomp} and \ref{Blin} where
  $\overline{ W_s(\underline{b};\underline{a})}$ is a component due to
  $ _0\! \Ext_A^2(M,M) = 0$, have no ghost terms in the minimal resolution
  of $A$. So one may wonder if there is a connection between
  $ _0\! \Ext_A^2(M,M) \ne 0$ and the existence of ghost terms for small $c$
  in the artinian case (and maybe also when $\dim A \le 1$, cf. Example
  \ref{Llin}, but not when $\dim A \ge 2$, cf. Example \ref{gh2}). Indeed we
  devote the next section to a study of ghost terms in the Eagon-Northcott
  resolution (\ref{ENres}).
 \end{remark}

 For determinantal zero-schemes we showed in \cite{K09} that the dimension
 formula \eqref{dimW} fails when (and only when?) $\cA$ is a linear matrix
 consisting of two rows, cf. Remark~\ref{dim0new}(ii). In our next
 example we use Theorem~\ref{compthmvar} to treat this case.

 \begin{example} \label{Llin} (determinantal quotients of
   $R=k[x_0,x_1,\cdots,x_c]$ of dimension one)

  Let $\cA$ be a general $2 \times (c+1)$ matrix of linear entries. The
  vanishing all $2 \times 2$ minors defines a reduced scheme $X=\Proj(A)$ of
  $c+1$ general points in $\PP^{c}$. Since we may suppose $A$ is good
  determinantal, $\Hom_A(M,M) \simeq A$ by \cite[Lem.\! 3.2]{KM}, and
 % (, whence $\dim_{(A}\GradAlg(H)$ is at most $c(c+1)$).
  we have $ _0\! \ext_R^1(M,M)=c(c+1)+c-2$ by Theorem~\ref{modulethm}. In the
  range $3 \le c \le 10$ we use Macaulay 2 to see \ $ _0\! \ext_A^1(M,M)=c-2$
  and \ $ _0\! \Ext_A^2(M,M)=0$. For $c \le 10$ we get from
  Theorem~\ref{compthmvar} that $\overline {W({0,0};{1,1,...,1})} $ is a
  generically smooth irreducible component of $\GradAlg(H)$ of dimension
  $c(c+1)$ and that every deformation of $A$ comes from deforming $\cA$.

  It is possible to describe $\overline {W({0,0};{1,1,...,1})} $ for all
  $c \ge 2$ without using Macaulay 2. Indeed it is well known that there exists
  a generically smooth component of the usual Hilbert scheme
  $\Hi ^p(\PP^{c})$, of dimension $c(c+1)$, containing an open subset
  parametrizing $c+1$ reduced points in $\PP^{c}$. Since the Hilbert function
  of $X$ is $(\dim A_i)=(1,c+1,c+1,...)$, we get
  $H^1({\mathcal I}_X(v)) \simeq \ _vH^{1}_{\goth m}(A) = 0$ for $v\ge 1$ and
  hence $\ _0\!\Hom_R (I_X,H^{1}_{\goth m}(A)) = 0$. $\GradAlg(H)$ is
  therefore isomorphic to $\Hi ^p(\PP^{c})$ at $(X)$ by \eqref{Grad}, and it
  follows that $\overline {W({0,0};{1,1,...,1})} $ is a generically smooth
  irreducible component of $\GradAlg(H)$ of dimension $c(c+1)$.
\end{example}

Theorem~\ref{compthmvar} admits a nice consequence concerning glicciness.
Indeed since glicciness is not necessarily an open property, the following
result may be useful.

\begin{corollary} \label{glicci} Let $X=\Proj(A)$, $I_X=I_{t}(\cA)$, be a
  standard determinantal scheme and suppose that $\ _0\!\Hom_R
  (I_X,H^{1}_{\goth m}(A)) = 0$ and that the map $ _0\! \Ext_A^2(M,M) \to \
  _0\! \Ext_R^2(M,M)$ of \eqref{specseq} is injective. Then the Hilbert scheme
  \ $\Hi ^p(\PP^{n})$ is smooth at $(X)$ and $(X)$ belongs to a unique
  irreducible component of $\Hi ^p(\PP^{n})$ whose general element $\tilde X
  \subset \PP^{n}$ is glicci. In particular this conclusion holds if $\dim X
  \ge 2 + \min\{\dim R/I_{t-1}(\cA)+1,\dim R/I_{t-1}(\cB)\} $ where ${\cB}$ is
  obtained from $\cA$ by deleting some column of ${\cA}$.
  \end{corollary}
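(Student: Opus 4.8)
The plan is to route the two hypotheses through Theorem~\ref{compthmvar} and the isomorphism \eqref{Grad}, and then quote the known glicciness of good determinantal schemes. Since $X$ is standard determinantal it is arithmetically Cohen--Macaulay, so $\depth_{\fm}A=\dim A\ge 1$; together with the assumption $_0\!\Hom_R(I_X,\HH^1(A))=0$ this puts us in the situation of \eqref{Grad}, so $\GradAlg(H)\simeq\Hi^p(\PP^n)$ at $(X)$ with matching tangent and obstruction spaces. Next I would invoke Theorem~\ref{compthmvar}: its hypothesis on the injectivity of $_0\!\Ext_A^2(M,M)\to{}_0\!\Ext_R^2(M,M)$ is assumed, while the hypothesis $_0\!\Hom_A(M,M)\simeq k$ will be supplied by the depth/codimension estimates used below, which force $X$ to be good determinantal, so that $\Hom_A(M,M)\simeq A$ by \cite[Lem.\! 3.2]{KM}. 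Theorem~\ref{compthmvar} then gives that $\GradAlg(H)$, hence $\Hi^p(\PP^n)$ at $(X)$, is smooth at $(X)$ and that $\overline{W_s(\underline{b};\underline{a})}$ is an irreducible component; smoothness at $(X)$ forces $(X)$ to lie on a unique component, namely $\overline{W_s(\underline{b};\underline{a})}$.

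It then remains to identify the general element of this component. By Proposition~\ref{WWs}, since $n\ge c$ one has $\overline{W_s(\underline{b};\underline{a})}=\overline{W(\underline{b};\underline{a})}$, so the locus $W(\underline{b};\underline{a})$ of good determinantal quotients is dense in the component. Thus a general $\tilde X\subset\PP^n$ in the component is good determinantal, i.e. a standard determinantal scheme that is a generic complete intersection, and such schemes are glicci by \cite{KMMNP}. This proves the main assertion.

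For the final (``in particular'') statement I would compare with the corresponding clause of Theorem~\ref{compthmvar} via $\dim A=\dim X+1$: the bound $\dim X\ge 2+\min\{\dim R/I_{t-1}(\cA)+1,\dim R/I_{t-1}(\cB)\}$ is precisely $\dim A\ge 3+\min\{\dim R/I_{t-1}(\cA)+1,\dim R/I_{t-1}(\cB)\}$. Since $I_{t-1}(\cB)\subseteq I_{t-1}(\cA)$, this forces $\codim_R I_{t-1}(\cA)\ge c+1$, so $X$ is good determinantal and $_0\!\Hom_A(M,M)\simeq k$; it forces $\depth_{\fm}A=\dim A\ge 3$, so $\HH^1(A)=0$ and the first hypothesis holds automatically; and, exactly as in the proof of the last sentence of Theorem~\ref{compthmvar} (using \cite[Thm.\! 4.5]{K10}), it gives $_0\!\Ext_A^2(M,M)=0$, so the required map is injective. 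Hence the general statement applies. The only genuinely non-formal input is the glicciness of good determinantal schemes imported from \cite{KMMNP}; the point deserving most care is the verification of $_0\!\Hom_A(M,M)\simeq k$ under the stated hypotheses, which is why it is cleanest to extract it from the good-determinantal property rather than assume it outright.
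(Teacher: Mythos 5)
Your proposal is correct and follows essentially the same route as the paper: identify $\GradAlg(H)$ with $\Hi^p(\PP^n)$ at $(X)$ via \eqref{Grad}, apply Theorem~\ref{compthmvar} to get smoothness and that $\overline{W_s(\underline{b};\underline{a})}$ is the unique component through $(X)$, note that its general element is (standard/good) determinantal, and quote the glicciness result of \cite{KMMNP}; the translation $\dim A=\dim X+1$ for the ``in particular'' clause is exactly how the paper reduces to the last sentence of Theorem~\ref{compthmvar}. Your remark that the hypothesis $_0\!\Hom_A(M,M)\simeq k$ of Theorem~\ref{compthmvar} is only guaranteed by the depth/codimension estimates of the ``in particular'' clause (via goodness and \cite[Lem.\;3.2]{KM}) is a fair point, but this is an imprecision shared with the paper's own statement of the corollary rather than a defect of your argument.
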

 
  \begin{proof} Indeed $\tilde X$ is standard determinantal by
    Theorem~\ref{compthmvar} and \eqref{Grad} and since standard determinantal
    schemes are glicci by \cite[Thm.\! 3.6]{KMMNP}, we are done.
\end{proof}
%\begin{remark} \label{Amodulerem4} {\bf TS} The results so far admit {\it
%    substantial generalizations} with respect to $R$ being a polynomial ring.
%  Indeed we may replace $R$ by any graded CM quotient $\overline R$of a
%  polynomial ring $k[x_0, \dots ,x_N]$, $k = \overline k$, with the standard
%  grading. Then the proof of Theorem~\ref{compthm} works (we need \cite[Rem.\!
%  5.4]{K10}). We get that Theorem~\ref{Amodulethm3} and
%  Theorem~\ref{Amodulethm5} are valid in this generality while it suffices to
%  suppose that $\Proj(R)$ is a smooth ACM-scheme (in the case $c > 2$, see the
%  next theorem for $c = 2$). Note that the assumption $k=\overline{k}$ allows
 %us to keep the definition ${ W(\underline{b};\underline{a})}$ as a certain
 % locus in $\Hi ^p(\PP^{n})$.
 % \end{remark}

Finally we generalize \cite[Thm.\! 5.16]{K10} which is about codimension-2
determinantal quotients of an ACM scheme,  %, see the theorem below,
to cover the artinian case. %Its proof enlightens the differences between the
%cases $c=2$ and $c > 2$ because for $c=2$ $M$ is a twist of the canonical
%module of $A$.
%
Below $ \overline R$ is a Cohen-Macaulay quotient of a polynomial ring
(i.e.\;$\Proj( \overline R) \subset \PP_k^{N}$ is ACM) and $A$ is a standard
determinantal quotient of $ \overline R$, but notice that it is not
necessarily a determinantal quotient of $ k[x_0, \cdots,x_N] $. Moreover
letting $\underline{b}$, $\underline{a}$ be as in \eqref{ba} and
$(A) \in W_s(\underline{b};\underline{a})$ observe that, for $\dim A \ge 2$,
we may replace $\GradAlg^H( \overline R)$ by $ \Hi ^p(\Proj(\overline R))$
since \eqref{Grad} extends to hold in this generality by \cite[Thm.\! 3.6,
Rem.\! 3.7]{K79}. For the irreducibility and dimension of
$\overline{ W_s(\underline{b};\underline{a})}$ in the case
$\overline R = k[x_0,x_1]$, see also \cite[Thms.\! 2.9, 2.12, 3.13]{I}.
  \begin{theorem} \label{elling} Let $P=\Proj( \overline R) \subset \PP_k^{N}$
    be an ACM scheme where $k$ is any field and let $X=\Proj(A) \subset P$,
    $A=R/I_{t}(\cA)$, be any standard determinantal scheme of codimension $2$
    in $P$. Then $\GradAlg^H( \overline R)$ is smooth at $(A)$ and \ $\dim_{(A)}
    \GradAlg^H( \overline R) = \lambda( \overline R)_2$ where
 $$ \lambda( \overline R)_2:= \sum_{i,j} \dim  \overline R_{(a_j-b_i)}
 + \sum_{i,j} \dim \overline R_{(b_i-a_j)} - \sum _{i,j} \dim \overline
 R_{(a_i-a_j)}- \sum _{i,j} \dim \overline R_{(b_i-b_j)} + 1 . $$ Moreover
 every deformation of $A$ comes from deforming $\cA$. In particular if
 $k=\overline{k}$, then $\GradAlg^H( \overline R)$ is smooth along
 $W_s(\underline{b};\underline{a})$ and the closure $\overline{
   W_s(\underline{b};\underline{a})}$ in $\GradAlg^H( \overline R)$ is an
 irreducible component of dimension $ \lambda( \overline R)_2$.
\end{theorem}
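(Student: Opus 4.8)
The plan is to deduce Theorem~\ref{elling} from the $\overline R$-versions of Theorems~\ref{modulethm} and \ref{compthm}(ii); the point that makes the argument go through for $c=2$ (and not in general) is that for $c=2$ the module $M=\coker\varphi^*$ is forced to be a twist of the canonical module of $A$, so that the $\Ext$-hypotheses of Theorem~\ref{compthm}(ii) hold automatically rather than having to be verified by hand (e.g.\ with Macaulay~2) as for $c>2$.

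First I would record the structural facts over $\overline R$. For $c=2$ the matrix $\cA$ is $t\times(t+1)$, $\varphi\colon F=\oplus_{i=1}^t\overline R(b_i)\to G=\oplus_{j=0}^{t}\overline R(a_j)$, and $M$ is resolved over $\overline R$ by the truncated Buchsbaum--Rim complex $0\to \wedge^{t+1}G^*\otimes\wedge^tF\to G^*\stackrel{\varphi^*}{\longrightarrow} F^*\to M\to 0$ (the ``$\cdots$'' of \eqref{BR} collapses since $S_{c-2}(F)=\overline R$), which is exact because $I_t(\cA)$ has the maximal codimension $2$ in $\overline R$. Since $\overline R$ is Cohen--Macaulay and the Eagon--Northcott complex resolves $A=\overline R/I_t(\cA)$ by a length-$2$ complex, $A$ is a perfect, hence Cohen--Macaulay, quotient of $\overline R$ ($\pd_{\overline R}A=2=\codim$), so $A$ has a graded canonical module $K_A$. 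The $i=2$ instance of the formula $K_{D_i}(N+1)\cong S_{i-1}M_i(\ell_i)$ (valid over $\overline R$ by the same Buchsbaum--Rim computation, cf.\ Remark~\ref{Amodulerem3}) gives $K_A(N+1)\cong M(\ell_2)$, i.e.\ $M\cong K_A(s)$ for a suitable twist $s$. I would then invoke the standard facts on the canonical module of a Cohen--Macaulay graded algebra, namely $\Hom_A(K_A,K_A)\cong A$ and $\Ext_A^i(K_A,K_A)=0$ for $i>0$; since $M$ differs from $K_A$ only by a twist this yields $_0\!\Hom_A(M,M)=A_0=k$ and $_0\!\Ext_A^i(M,M)=0$ for $i=1,2$.

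Next, by Remark~\ref{Amodulerem3} Theorems~\ref{modulethm} and \ref{compthm} apply verbatim with $\overline R$ in place of $R$ (the only inputs are the spectral sequence \eqref{specseq}, Fitting's lemma and exactness of the Buchsbaum--Rim complex, none of which uses regularity of $R$), once the binomials defining $\lambda_c$ and the $K_i$ are read as the corresponding values of the Hilbert function of $\overline R$. Theorem~\ref{compthm}(ii) then gives ${\rm Def}_{M/\overline R}\simeq {\rm Def}_{A\in W_s(\underline{b};\underline{a})}\simeq {\rm Def}_{A/\overline R}$, all formally smooth, and that every deformation of $A$ to an artinian base comes from deforming $\cA$; Corollary~\ref{everydefT}, whose proof does not use $_0\!\Hom_A(M,M)\simeq k$, upgrades this to an arbitrary local ring $T$. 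Formal smoothness of ${\rm Def}_{A/\overline R}$ is exactly unobstructedness of $(A)$, so $\GradAlg^H(\overline R)$ is smooth at $(A)$, and $\dim_{(A)}\GradAlg^H(\overline R)=\dim\,_0\!\Ext_{\overline R}^1(M,M)$ by Theorem~\ref{modulethm}; since for $c=2$ the sum $K_3+\dots+K_c$ is empty, running the dimension count in the proof of Theorem~\ref{modulethm} with the Hilbert function of $\overline R$ turns this quantity into exactly $\lambda(\overline R)_2$.

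Finally, for the statement over $k=\overline k$: the preceding argument is uniform over the $k$-points of $W_s(\underline{b};\underline{a})$, so $\GradAlg^H(\overline R)$ is smooth along $W_s(\underline{b};\underline{a})$; the isomorphism ${\rm Def}_{A\in W_s(\underline{b};\underline{a})}\simeq {\rm Def}_{A/\overline R}$ shows that locally at $(A)$ the locus $W_s(\underline{b};\underline{a})$ fills out $\GradAlg^H(\overline R)$, and since $W_s(\underline{b};\underline{a})$ is irreducible (the argument of Proposition~\ref{WWs}, via the affine space $\VV=\Hom_{\overline R}(G^*,F^*)$ of $t\times(t+1)$ matrices with $I_t(\cA)$ of maximal codimension, carries over unchanged) it follows that $\overline{W_s(\underline{b};\underline{a})}$ is an irreducible component of $\GradAlg^H(\overline R)$ of dimension $\lambda(\overline R)_2$. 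The main obstacle is concentrated in the second paragraph: checking cleanly that over the merely Cohen--Macaulay (possibly singular, non-Gorenstein) base $\overline R$ one still has $A$ Cohen--Macaulay with $M\cong K_A(s)$, and that the cited $\Ext$-vanishings for the canonical module apply here; once that is secured, the rest is a transcription of the $c=2$ specialization of Theorems~\ref{modulethm} and \ref{compthm} over $\overline R$.
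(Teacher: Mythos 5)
Your proposal is correct and follows essentially the same route as the paper: the key observation in both is that for $c=2$ the Buchsbaum--Rim/Eagon--Northcott data force $A$ to be Cohen--Macaulay with $M\cong K_A(s)$, so that $\Hom_A(M,M)\cong A$ and $\Ext_A^i(M,M)=0$ for $i>0$ hold automatically, after which Remark~\ref{Amodulerem3} lets one apply Theorems~\ref{modulethm} and \ref{compthm}(ii) over $\overline R$ to get smoothness, the dimension $\lambda(\overline R)_2$, and the component statement. Your write-up merely fills in more of the details (perfection of $A$ over the Cohen--Macaulay base, the role of Corollary~\ref{everydefT}, irreducibility of $W_s(\underline{b};\underline{a})$ over $\overline R$) that the paper leaves as a sketch.
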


\begin{proof} Since $M$ is a twist of the canonical module of $A$ when $c=2$,
  it is well known that $ _0\! \Ext_A^i(M,M) = 0$ for $i>0$ and
  $\Hom_A(M,M) \simeq A$. We get
  $ {\rm Def}_{M/ \overline R} \simeq {\rm Def}_{A/ \overline R}$ and every
  statement of the theorem from Theorem~\ref{compthm}, Lemma~\ref{unobst} and
  Remark~\ref{Amodulerem3}, except the displayed formula. Using, however, the
  last conclusion of Theorem~\ref{modulethm}, we easily prove the displayed
  formula, cf. \eqref{BR}. Finally note that we have included the assumption
  $k=\overline{k}$ in the statements of ${ W_s(\underline{b};\underline{a})}$
  for the only reason that ${ W_s(\underline{b};\underline{a})}$ is by
  definition a certain locus (of $k$-points) of $\GradAlg^H( \overline R)$.
\end{proof}
So there are no singular points of $\GradAlg^H( \overline R)$ at $(A) \in
W_s(\underline{b};\underline{a})$ when $c=2$ while singular points of
$\GradAlg^H( \overline R)$ for $c>2$ are quite common (see \cite[Rem.\!
3.6]{KM09} and \cite{siq}).

\section{ghost terms}

Let $N$ be a graded $R$-module with a {\it minimal} $R$-free resolution $0
\to P_{\bullet} \twoheadrightarrow N$. By a ghost term of $P_{\bullet}$ we
mean a direct free summand that appears in consecutive terms of $P_{\bullet}$.
As in the minimal resolution conjecture (\cite{lor, M}, and see \cite{MMN,
  MP,ML} and its references for other contributions), one expects that a
general element $R/I$ of $\GradAlg(H)$ contains ``as few ghost terms as
possible'', while ghost terms for special elements of $\GradAlg(H)$ are more
common. Indeed the semicontinuity of graded Betti numbers imply that these
numbers decrease under generization (deformation to a more general
element), but there may still be ghost terms left in the minimal free
resolution of the general element of $\GradAlg(H)$. In this section we shall
see that some ghost terms of a determinantal ring $R/I$ easily disappear under
generizations while other ghost terms do not.
Letting %$\underline{a} = a_0, a_1,...,
%a_{t+c-2}$, 
$\underline{a}_{\hat i} = a_0,...a_{i-1},a_{i+1}..., a_{t+c-2}$ and
$\underline{b}_{\hat i} = b_1,...b_{i-1},b_{i+1}..., b_{t}$ we have the
following result (allowing $t=2$, in which case $W_s(\underline{b}_{\hat
  i};\underline{a}_{\hat j})$ consists of complete intersections of $R$).

\begin{proposition}\label{ghost} \ Suppose $a_j = b_i$ for some $j
  \in \{0,t+c-2\}$
  and $i \in \{1,t\}$. Then we have an inclusion
  $ W_s(\underline{b}_{\hat i};\underline{a}_{\hat j}) \subset
  W_s(\underline{b};\underline{a}) $
  of open irreducible subsets of $\GradAlg(H)$. Moreover if
  $W_s(\underline{b};\underline{a}) \setminus W_s(\underline{b}_{\hat
    i};\underline{a}_{\hat j}) \ne \emptyset$,
  then every $R/I$ of
  $W_s(\underline{b};\underline{a}) \setminus W_s(\underline{b}_{\hat
    i};\underline{a}_{\hat j})$
  admits a generization
  $(R/I_g) \in W_s(\underline{b}_{\hat i};\underline{a}_{\hat j})$ removing
  exactly all ghost terms in the Eagon-Northcott resolution \eqref{ENres}
  coming from $a_j = b_i$; in particular a resolution of $R/I_g$ of
  $W_s(\underline{b}_{\hat i};\underline{a}_{\hat j})$ is given by its
  Eagon-Northcott
  resolution. % of an appropriate element of $ W_s(\underline{b}_{\hat
  % i};\underline{a}_{\hat j})$.
 \end{proposition}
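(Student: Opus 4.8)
\emph{The plan.} I would first fix notation: write $\mathrm{EN}(\underline{b};\underline{a})$ for the Eagon--Northcott complex attached to a $t\times(t+c-1)$ matrix of degree type $(\underline{b};\underline{a})$; its terms $\mathrm{EN}_0=R$ and $\mathrm{EN}_k=\wedge^{t+k-1}G^{\ast}\otimes S_{k-1}(F)\otimes\wedge^{t}F$ for $k\ge 1$ depend only on $\underline{a},\underline{b}$, and it is a (not necessarily minimal) $R$-free resolution of $R/I_{t}(\cA)$ whenever $\codim I_{t}(\cA)=c$. For the inclusion of strata, take $(R/J)\in W_s(\underline{b}_{\hat i};\underline{a}_{\hat j})$, write $J=I_{t-1}(\cB)$ with $\cB$ a $(t-1)\times(t+c-2)$ matrix of type $(\underline{b}_{\hat i};\underline{a}_{\hat j})$, and enlarge $\cB$ to a $t\times(t+c-1)$ matrix $\cA$ by inserting a row of degree $b_i$ and a column of degree $a_j$ meeting in the entry $1$ and filled with $0$'s elsewhere along that row and column (legitimate since $a_j-b_i=0$, while $0$ is homogeneous of every degree); cofactor expansion gives $I_{t}(\cA)=I_{t-1}(\cB)$, so $(R/J)\in W_s(\underline{b};\underline{a})$. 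The two strata have the same Hilbert function -- by the splitting in the third paragraph their Eagon--Northcott complexes differ only by trivial complexes -- so they sit in a common $\GradAlg(H)$; openness and irreducibility are as recorded in Proposition~\ref{WWs}.

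\emph{The generization.} If $(R/I)\in W_s(\underline{b};\underline{a})$ is presented by $\cA=(f_{kl})$ with $f_{ij}\neq 0$ -- necessarily a nonzero scalar, since $\deg f_{ij}=a_j-b_i=0$ -- then the operations subtracting $f_{ij}^{-1}f_{lj}\cdot(\text{row }i)$ from row $l$ and then $f_{ij}^{-1}f_{ik}\cdot(\text{column }j)$ from column $k$ are degree preserving exactly because $a_j=b_i$, and they bring $\cA$ to the block form $\operatorname{diag}(f_{ij},\cA'')$ with $\cA''$ a $(t-1)\times(t+c-2)$ matrix of type $(\underline{b}_{\hat i};\underline{a}_{\hat j})$ and $I_{t}(\cA)=I_{t-1}(\cA'')$; hence $(R/I)\in W_s(\underline{b}_{\hat i};\underline{a}_{\hat j})$. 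Consequently every presenting matrix of an $(R/I)$ in the complement $W_s(\underline{b};\underline{a})\setminus W_s(\underline{b}_{\hat i};\underline{a}_{\hat j})$ has $f_{ij}=0$. I would then put $\cA_s:=\cA+s\,E_{ij}$, where $E_{ij}$ has $1$ in position $(i,j)$; by Lemma~\ref{defalpha} (equivalently, by acyclicity of the Eagon--Northcott complex of $\cA_s$ on the open set of $s$ with $\codim I_{t}(\cA_s)=c$) the family $R[s]/I_{t}(\cA_s)$ is flat over a neighbourhood of $0$ in $\AAA^1_s$ with constant Hilbert function $H$, and for $s\neq 0$ its fibre has the unit $s$ in position $(i,j)$, hence lies in $W_s(\underline{b}_{\hat i};\underline{a}_{\hat j})$ by the reduction just made. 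Thus $R/I$ admits the generization $R/I_g:=R/I_{t}(\cA_s)$ ($s$ general), and $R/I_g=R/I_{t-1}(\cA''_s)$ with $\cA''_s$ of type $(\underline{b}_{\hat i};\underline{a}_{\hat j})$ and $\codim I_{t-1}(\cA''_s)=c$, so $\mathrm{EN}(\underline{b}_{\hat i};\underline{a}_{\hat j})$ resolves $R/I_g$.

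\emph{Matching the ghost terms.} It remains to identify $\mathrm{EN}(\underline{b}_{\hat i};\underline{a}_{\hat j})$ inside $\mathrm{EN}(\underline{b};\underline{a})$. In the basis where the dual of $\cA_s$ becomes $\psi\oplus\id_{R(-m)}$, $\psi$ the dual of $\cA''_s$ and $m:=a_j=b_i$, I would expand every exterior and symmetric power in $\mathrm{EN}(\underline{b};\underline{a})$ along $G^{\ast}=(G')^{\ast}\oplus R(-m)$ and $F=F'\oplus R(m)$, using $\wedge^{q}G^{\ast}=\wedge^{q}(G')^{\ast}\oplus\wedge^{q-1}(G')^{\ast}\otimes R(-m)$, $S_{p}(F)=\bigoplus_{q\ge 0}S_{p-q}(F')\otimes R(qm)$ and $\wedge^{t}F=\wedge^{t-1}F'\otimes R(m)$. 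Since the twists $-m$ and $+m$ cancel, the $k$-th term of $\mathrm{EN}(\underline{b};\underline{a})$ splits off a copy of the $k$-th term of $\mathrm{EN}(\underline{b}_{\hat i};\underline{a}_{\hat j})$ together with free summands $R(-v)$ each of which reappears, with the same twist $v$, in the $(k+1)$-st term, the connecting differential being the identity since it is induced by the summand $\id_{R(-m)}$. Hence $\mathrm{EN}(\underline{b};\underline{a})\cong\mathrm{EN}(\underline{b}_{\hat i};\underline{a}_{\hat j})\oplus\cG$ as complexes, with $\cG$ a direct sum of trivial complexes $R(-v)\stackrel{\id}{\longrightarrow}R(-v)$ whose summands are precisely the free summands of $\mathrm{EN}(\underline{b};\underline{a})$ that involve the column of degree $a_j$ together with the row of degree $b_i$ -- i.e.\ the ghost terms coming from $a_j=b_i$ -- and are absent from $\mathrm{EN}(\underline{b}_{\hat i};\underline{a}_{\hat j})$. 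Deleting $\cG$ therefore returns $\mathrm{EN}(\underline{b}_{\hat i};\underline{a}_{\hat j})$, a resolution of $R/I_g$, which is the final assertion.

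\emph{The main obstacle.} The one non-formal point is this last decomposition: because the Eagon--Northcott complex is not additive, one must genuinely verify that the complex of $\psi\oplus\id_{R(-m)}$ splits off exactly the predicted trivial summands and nothing more, and that these match, term by term, the ghost terms attached to the single coincidence $a_j=b_i$ -- any further coincidences among $\underline{a}_{\hat j},\underline{b}_{\hat i}$ producing ghost terms that survive in $\mathrm{EN}(\underline{b}_{\hat i};\underline{a}_{\hat j})$. This is bookkeeping with the exterior/divided-power structure of the complex; the hypothesis $i\in\{1,t\}$, $j\in\{0,t+c-2\}$ is what keeps the reduced degree sequences in standard form and makes the cancellation combinatorics transparent.
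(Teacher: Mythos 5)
Your proof is correct and follows essentially the same route as the paper's: the bordered matrix $\left(\begin{smallmatrix}1&0\\0&\cA'\end{smallmatrix}\right)$ together with the row/column reduction of a unit $(i,j)$-entry gives the inclusion and the characterization of the complement, and the one-parameter family $\cA+s\,E_{ij}$ combined with openness of maximal codimension produces the generization, exactly as in the paper. The explicit term-by-term splitting of the Eagon--Northcott complex in your final paragraph is more detail than the paper supplies (and can be shortcut: both complexes resolve the same cyclic module, and any two graded free resolutions differ by a direct sum of trivial complexes), but it is consistent with the argument.
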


 \begin{proof} As observed in the proof of Lemma~\ref{WWs}, the
   vanishing of $\Ext_R^i(R/I_{t}(\cA),R)$ is an open property. This holds
   also for the elements $R/I_{t}(\cA)$ of $\GradAlg(H)$, i.e. the subset of
   $\GradAlg(H)$ such that $I_{t}(\cA)$ has maximal codimension in $R$ (this
   subset is $ W_s(\underline{b};\underline{a})$) is open. Since it is
   irreducible by Lemma~\ref{WWs}, we have proved the open irreducible property
   stated in Proposition~\ref{ghost}, and it remains to see the inclusion $
   W_s(\underline{b}_{\hat i};\underline{a}_{\hat j}) \subset
   W_s(\underline{b};\underline{a}) $ and the existence of generizations.

   We {\it claim} that the elements $R/I_{t}(\cA)$ of $
   W_s(\underline{b};\underline{a})$ whose matrix $\cA$ contains a unit of the
   field $k$ at the $(i,j)$-entry, belong to $W_s(\underline{b}_{\hat
     i};\underline{a}_{\hat j})$. % (resp. $W_s(\underline{b};\underline{a})
   %\setminus W_s(\underline{b}_{\hat i};\underline{a}_{\hat j})$). 
   Indeed $\cA$ is a presentation matrix of $M = \coker \varphi^*$ and by
   rearranging the direct summands of the source and target of the morphism
   $\varphi^*$, we may assume $(i,j)=(1,0)$. By elementary row operations we
   transform $\cA$ to a matrix with only zeros (and one $1$) in the first
   column, i.e. there is an invertible $t \times t$ matrix $\cC$ such that the
   cokernel of the map induced by $\cC \cdot \cA$ is isomorphic to $M$. By
   Fitting's lemma, $I_{t}(\cA)= I_{t}(\cC \cdot \cA)$. Moreover the $(t-1)
   \times (t+c-2)$ matrix $\cA'$ obtained by deleting the first row and column
   of $\cC \cdot \cA$ satisfies $ I_{t-1}(\cA')=I_{t}(\cC \cdot \cA)$ and
   defines a determinantal ring of $W_s(\underline{b}_{\hat
     i};\underline{a}_{\hat j})$, and we have proved the inclusion and the
   whole {\it claim} (but taking any $R/I_{t}(\cA')$ of
   $W_s(\underline{b}_{\hat i};\underline{a}_{\hat j})$ and putting $\cA=
   \left(\begin{smallmatrix} 1 & 0 \\ 0 & \cA' \end{smallmatrix}\right)$ we
   get $(R/I_{t}(\cA)) \in W_s(\underline{b};\underline{a})$ which is an
   easier argument for the inclusion).

   %that the compliment appearing in Proposition~\ref{ghost} namely having a
   %zero at the $(i,j)$-entry.
   Finally to see the existence of the generization of $R/I$ of
   $W_s(\underline{b};\underline{a}) \setminus W_s(\underline{b}_{\hat
     i};\underline{a}_{\hat j})$, $I=I_{t}(\cA)$, we may assume that $\cA$ has
   a $0$ at the $(i,j)$-entry by the proven claim. Then we apply
   Lemma~\ref{defalpha} to $T=k[u]_{\wp}$, $\wp=(u)$, %and $M=M_{\cA}$
   letting $u=0$ correspond to $\cA$ and $\cA_T$ to a matrix obtained by
   replacing the 0 of the $(i,j)$-entry by $u$ and all other $(i',j')$-entries
   by polynomials of $R \otimes k[u]$ of degree $a_{j'}-b_{i'}$ with
   coefficients in $k[u]$ such that the choice $u=1$ makes the corresponding
   matrix general enough. More precisely, with notations as above, i.e. with
   $(i,j)=(1,0)$ etc., we may choose the coefficients of the polynomials of
   the 1. column such that the choice $u=1$ make the entries equal to 0 for
   all $(i,0)$ with $i>1$ and the entries of $\cA'$ general. In some open
   subset $U \ni \{0,1\} $ of $\Spec(k[u])$ the ideal $I_{t}(\cA_u)$ of the
   matrices given by $\cA_u$ for $u \in U \setminus \{0\}$ has maximal
   codimension in $R$ and we are done.
\end{proof}

\begin{corollary} \label{corghost} Let $I= I_{t}(\cA)$, $R/I \in
  W_s(\underline{b};\underline{a}) $, and suppose that the ideal of submaximal
  minors $ I_{t-1}(\cA)=R$. Then $R/I$ is a complete intersection of $R$.
\end{corollary}
\begin{proof} Suppose $ I_{t-1}(\cA)=R$. Since the submaximal minors are
  homogeneous and generate $ I_{t-1}(\cA)$, one of the submaximal minors must
  be a unit, whence at least one of the entries of $\cA$ is a unit. Using the
  notations and the proven claim in the proof above, we have $I_{t}(\cA)=
  I_{t}(\cC \cdot \cA)= I_{t-1}(\cA')$, and we get $I_{t-1}(\cA)= I_{t-1}(\cC
  \cdot \cA)= I_{t-2}(\cA')$ for $t \ge 3$ by the same arguments. Since
  $I_{1}(\cA')$ is a c.i. of $R$, we conclude by induction on $t$.
\end{proof}
\begin{remark} \label{ghdim2} The ghost terms treated in
  Proposition~\ref{ghost} are removable under generization, while all other
  ghost terms appearing in the Eagon-Northcott resolution of $R/I_{t}(\cA)$
  may sometimes probably be removed, but not always. Indeed the latter can
  {\bf not} be removed under generization provided {\it every graded
    deformation of $R/I_{t}(\cA)$ comes from deforming $\cA$} because, in this
  case, $\overline{W_s(\underline{b};\underline{a})}$ is an irreducible
  component of $\GradAlg(H)$, whence the minimal resolution of its general
  element is given by some Eagon-Northcott resolution of $R/I_{t}(\cA')$ with
  $\cA'$ minimal, cf. Example~\ref{gh2} below. Note that
 %Theorem~\ref{Amodulethm3}, 
  Theorem~\ref{Amodulethm5} and Theorem~\ref{compthmvar} give conditions under
  which every graded deformation of $R/I_{t}(\cA)$ comes from deforming $\cA$.
  In particular if %$R/I_{t}(\cA)$ is general and
  $\dim R/I_{t}(\cA) \ge 3$ and $a_{i-\min (3,t)}\ge b_{i}$ for $\min (3,t)\le
  i \le t$, {\it only} ghost terms as in Proposition~\ref{ghost} are removable
  under generization!
\end{remark}

\begin{example} \label{gh1}
(removing all ghost terms using  Proposition~\ref{ghost})

(i) Let $R=k[x_0,x_1]$, let $\cB$ be a general $2 \times 3$ matrix with linear
(resp. quadratic) entries in the first (resp. second) row and let $\cA=
\left(\begin{smallmatrix} u & v \\ w & \cB \end{smallmatrix}\right)$ where $u
\in k$, $v$ a row of general linear forms and $w$ a column whose transpose
is $(0,x_0)$. The vanishing of all $3 \times 3$ minors defines an artinian
ring with $h$-vector $(1,2,3,1)$ for every $u \in k$. If $I$ (resp. $I_g$) is
the ideal given by all $3 \times 3$ minors of $\cA$ with $u=0$ (resp. $u=1$),
then $(R/I) \in W_s(-2,-1,-1;\;-1,0,0,0)$ and $(R/I_g) \in W_s(-2,-1;\;0,0,0)$
and using Macaulay 2 we find minimal resolutions
\begin{equation*} \label{G33} 0 \rightarrow R(-5) \oplus R(-4)^{2} \rightarrow
  R(-4) \oplus R(-3)^{3}\rightarrow I \rightarrow 0\, ,
\end{equation*}
\begin{equation*} \label{G33} 0 \longrightarrow R(-5) \oplus R(-4)
  \longrightarrow R(-3)^{3} \longrightarrow I_g \longrightarrow 0.
\end{equation*}
Here $R/I_g$ is a generization of  $R/I$ in  $\GradAlg(H)$.

(ii) Let $R=k[x_0,x_1,x_2]$, let $\cB$ be a general $2 \times 4$ matrix with
linear (resp. cubic) entries in the first (resp. second) row and let
$\cA= \left(\begin{smallmatrix} u & v \\
    w & \cB \end{smallmatrix}\right)$ where $u \in k$, $v$ a row of general
linear forms and $w$ the column whose transpose is $(0,x_0^2)$. Using
Macaulay 2 we get that the vanishing of all $3 \times 3$ minors defines an
artinian ring with $h$-vector $(1,3,6,10,9,7,3)$. If $I$ (resp. $I_g$) is the
ideal given by the $3 \times 3$ minors of $\cA$ with $u=0$ (resp. $u=1$), then
$(R/I) \in W_s(-3,-1,-1;\;-1,0,0,0,0)$ and $(R/I_g) \in W_s(-3,-1;\;0,0,0,0)$
and we have minimal resolutions
\begin{equation*} 0 \rightarrow R(-10) \oplus R(-8) \oplus R(-6) \rightarrow
  R(-7)^4 \oplus R(-5)^4 \rightarrow R(-4)^{6} \rightarrow I_g \rightarrow 0\, ,
\end{equation*}
{\small
\begin{equation*}  0 \rightarrow R(-10) \oplus R(-8)^2 \oplus R(-6)^3\rightarrow
   R(-8) \oplus R(-7)^4 \oplus R(-6)^2 \oplus R(-5)^8 \rightarrow R(-5)^4
  \oplus R(-4)^{6} \twoheadrightarrow I. 
\end{equation*} }
\end{example}

\begin{example} \label{gh2}
(removing {\it some} ghost terms using  Proposition~\ref{ghost})

 Let $R=k[x_0,x_1,x_2]$ and let
$\cA= \left(\begin{smallmatrix} u & v \\
    w & \cB \end{smallmatrix}\right)$ be as in Example~\ref{gh1} (ii) only
replacing cubic by quadratic and $w^T$ by $(0,x_0)$. Using Macaulay 2 we get
that the vanishing of all $3 \times 3$ minors defines an artinian ring with
$h$-vector $(1,3,6,4,1)$. If $I$ (resp. $I_g$) is the
ideal generated by the $3 \times 3$ minors of $\cA$ for $u=0$ (resp. $u=1$),
then $(R/I) \in W_s(-2,-1,-1;\;-1,0,0,0,0)$ and $(R/I_g) \in
W_s(-2,-1;\;0,0,0,0)$ and we have minimal resolutions
\begin{equation} \label {bres} 0 \rightarrow R(-7) \oplus R(-6) \oplus R(-5)
  \rightarrow R(-5)^4 \oplus R(-4)^4 \rightarrow R(-3)^{6} \rightarrow I_g
  \rightarrow 0\, ,
\end{equation}
{\small
  \begin{equation} \label {bres2} 0 \rightarrow R(-7) \oplus R(-6)^2 \oplus
    R(-5)^3\rightarrow R(-6) \oplus R(-5)^6 \oplus R(-4)^8 \rightarrow R(-4)^4
    \oplus R(-3)^{6} \rightarrow I \to 0.
  \end{equation} }
Observe that there is still a ghost term in the resolution of $I_g$. In the
artinian case, however, we have seen in Example~\ref{boij} that $\overline{ W_s(-2,-1;\;0,0,0,0)}$
is not an irreducible component of $\GradAlg(H)$. So there is a
possibility for removing this ghost under a generization to a non-determinantal
artinian ring! Indeed M. Boij pointed out in a note he sent me that a general
artinian algebra with Hilbert function $(1,3,6,4,1)$ can be seen as a type
two algebra $A$ given by the inverse system $(f,g)$ where $\deg f = 4$ and
$\deg g = 3$. Following Boij's note, this algebra has a minimal resolution
as in \eqref{bres} with the ghost term $R(-5)$ removed and $A$ is probably a
generization of $R/I_g$.

%This will require further investigations which we will not look into here; 
%see, however, \cite{Krao} for various results of ``killing ghost terms'' under
%generization. 
On the other hand, we may very well consider the determinantal algebras $R/I$
and $R/I_g$ described above in the polynomial ring $R=k[x_0,x_1,\cdots,x_n]$
for $n \ge 5$. This leads to determinantal rings of dimension greater or equal
to 3 with minimal resolutions exactly as in \eqref{bres} and \eqref{bres2}.
Thanks to Theorem~\ref{Amodulethm5}, see Remark~\ref{ghdim2},
$\overline{W_s(-2,-1;\;0,0,0,0)}$ is in this case an irreducible component of
$\GradAlg(H)$ (as well as of $\Hi^p(\PP^n)$, up to closure), whence $R/I_g$
has no generization to non-determinantal rings and the ghost term $R(-5)$ of
\eqref{bres} remains a ghost term for the general element of the component.
 \end{example}

 Finally suppose $c=2$. Then repeated use of Proposition~\ref{ghost} will
 remove $\it all$ ghost terms in the minimal resolution of $I$ since the
 transpose of $\cA$ is the Hilbert-Burch matrix. This is rather well known
 (cf. \cite[Thm.\! 2(iii)]{elli}); indeed even more precise results are true,
 see \cite{I2} which finds the codimension of all ``ghost terms strata'' when
 $\dim A$ is small and $A$ is Cohen-Macaulay. Note that any codimension-$2$
 Cohen-Macaulay quotient $R/I$ is determinantal, so Proposition~\ref{ghost}
 applies to such $R/I$. If $A=R/I$ is the homogeneous coordinate ring of a
 codimension-$2$ scheme $X$ in $\PP^n$, then $A$ is Cohen-Macaulay for $\dim
 A=1$ and there are no ghost terms in the minimal resolution of $I:=I_X$ for
 $A$ general. If $X$ is a locally Cohen-Macaulay (lCM) curve in $\proj{3}$ and
 $A$ is not necessarily Cohen-Macaulay, then we can generalize the removal of
 ghost terms above for codimension-$2$ quotients to the following.

\begin{proposition}\label{amainresRao} \cite[Thm.\! 2.8]{Krao}
  Let $X \subset \proj{3}$ be any lCM curve and let $ 0 \rightarrow L_4
  \xrightarrow{\ \sigma} L_3 \rightarrow \cdots \rightarrow M \rightarrow 0 $
  be a minimal $R$-free resolution of the Rao module $M:=H_{*}^1(\cI_{X})$. By
  \cite{R} there is a minimal $R$-free resolution of the following form
\begin{equation*} % \label{resoluMI3}
  0 \rightarrow L_4  \xrightarrow{ \ \sigma \oplus 0 \ } L_3 \oplus P_2
  \rightarrow P_1 \rightarrow I_X \rightarrow 0 \,  \ 
\end{equation*}
(i.e. the composition $L_4 \xrightarrow{ \ \sigma \oplus 0 \ } L_3 \oplus P_2
\to P_2$ is zero). If $P_1$ and $P_2$ have a common free summand; %\\[2mm]
%\centerline{ 
  $P_2 = P_2' \oplus R(-i)$, $P_1 = P_1' \oplus R(-i)$, % }\\[2mm]
  then there is a generization $X'$ of $X$ in $\GradAlg(H)$ with minimal
  resolution
\begin{equation*} % \label{resoluMI3}
  0 \rightarrow L_4  \xrightarrow{ \ \sigma \oplus 0 \ } L_3 \oplus P_2'
  \rightarrow P_1' \rightarrow I_{X'} \rightarrow 0 \, . \ 
\end{equation*}
\end{proposition}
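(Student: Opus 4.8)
Since Proposition~\ref{amainresRao} is quoted from \cite{Krao}, I will only sketch the strategy. The heart of the argument is a one‑parameter deformation of the map $\phi\colon L_3\oplus P_2\to P_1$ occurring in the displayed resolution, in the spirit of the proof of Proposition~\ref{ghost}, together with Rao's classification \cite{R} of the possible shapes of an $R$‑free resolution of $I_X$ in terms of the minimal resolution $0\to L_4\xrightarrow{\sigma}L_3\to\cdots\to M\to 0$ of the Rao module $M=H^1_*(\mathcal I_X)$. The first step is to record the relevant block structure: writing $\phi$ in blocks with respect to $P_2=P_2'\oplus R(-i)$ and $P_1=P_1'\oplus R(-i)$ (columns indexed by $L_3,P_2',R(-i)$, rows by $P_1',R(-i)$), minimality of the displayed resolution forces every block to have entries in $\goth m$; in particular the block $R(-i)\to R(-i)$ is a scalar and equals $0$. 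Since the image of $\sigma\oplus 0\colon L_4\to L_3\oplus P_2$ lies entirely in $L_3$, the condition ``$\phi\circ(\sigma\oplus 0)=0$'' constrains only $\phi|_{L_3}$; the blocks involving $P_2'$ and the two copies of $R(-i)$ may therefore be altered freely.

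Next I would build the family. Over $k[u]$ set $\phi_u=\phi+u\,\epsilon$, where $\epsilon\colon P_2\to P_1$ is the block map that is the identity from $R(-i)\subset P_2$ onto $R(-i)\subset P_1$ and zero on all other blocks; then $\phi_u\circ(\sigma\oplus 0)=0$ for every $u$ with no further conditions, and at $u=0$ we recover $\phi$. Acyclicity of a bounded complex of free modules is an open condition on the base (Buchsbaum--Eisenbud criterion), so, since the complex $0\to L_4\xrightarrow{\sigma\oplus 0}L_3\oplus P_2\xrightarrow{\phi_u}P_1$ is acyclic at $u=0$, it remains acyclic over an open $U\ni 0$ of $\Spec(k[u])$. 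As in the proof of Proposition~\ref{ghost} this yields flatness: the free terms are $k[u]$‑flat and $\sigma\oplus 0$ stays injective, so $\coker\phi_u$ is $k[u]$‑flat over $U$, a flat deformation of $\coker\phi_0\cong I_X$. By \cite{R} an $R$‑free resolution of exactly the displayed shape is the resolution of the saturated ideal of an lCM curve in $\proj{3}$ with Rao module $M$; hence $\coker\phi_u\cong I_{X_u}$ for a curve $X_u$, the family $(R/I_{X_u})$ lies in $\GradAlg(H)$, and $X':=X_{u_0}$ for general $u_0\in U$ is a generization of $X$ (strict, since its graded Betti numbers are strictly smaller by the next step).

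Finally I would pass to the general fibre. For $u_0\neq 0$ the block $R(-i)\to R(-i)$ of $\phi_{u_0}$ is a unit of $k$, so one row and one column operation over $R$ split off the common summand $R(-i)$ from both $P_2$ and $P_1$ while leaving $L_4$ and $L_3$ (hence $\sigma\oplus 0$) untouched; the Schur‑complement corrections that appear in the remaining maps are of the form $(\text{entry of }\goth m)\cdot u_0^{-1}\cdot(\text{entry of }\goth m)$, hence lie in $u_0^{-1}\goth m^2\subset\goth m$. Thus the resulting complex $0\to L_4\xrightarrow{\sigma\oplus 0}L_3\oplus P_2'\to P_1'\to I_{X'}\to 0$ still has all its maps with entries in $\goth m$ and is therefore a minimal $R$‑free resolution of $I_{X'}$.

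The main obstacle is precisely the step that invokes \cite{R}: one must be sure that perturbing only the ``free part'' $P_\bullet$ of the resolution within the shape prescribed by Rao produces, for general $u_0$, again the \emph{saturated} ideal of a curve — i.e.\ a genuine point of $\GradAlg(H)$ (equivalently $\Hi^p(\proj{3})$) — and not merely a torsion‑free rank‑one $R$‑module. Granting the Rao/Martin‑Deschamps--Perrin structure theory that controls this, the remaining two ingredients are the routine ones above: flatness from the acyclicity of the deformed complex together with the injectivity of the unmodified $\sigma$, and the Gaussian elimination on the general fibre which both removes the ghost pair $R(-i)$ and shows the shortened resolution is still minimal.
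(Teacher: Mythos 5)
The paper offers no proof of Proposition~\ref{amainresRao}: it is quoted verbatim from \cite[Thm.\;2.8]{Krao}, with only the remark that ``generization with constant postulation'' there means generization in $\GradAlg(H)$. Your reconstruction has the right architecture and runs exactly parallel to the proof of Proposition~\ref{ghost}: insert a parameter $u$ into the zero $R(-i)\to R(-i)$ block, observe that $\phi_u\circ(\sigma\oplus 0)=0$ holds for all $u$ because the perturbation vanishes on $L_3$, use openness of acyclicity plus constancy of the Hilbert function (the alternating sum over the unchanged totality of free terms) to get a flat family over an open $U\ni 0$, and split off the trivial complex $R(-i)\xrightarrow{\,u_0\,}R(-i)$ at a general fibre. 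Your elimination step is correct, including the point that the new map $L_4\to L_3\oplus P_2'$ is still $\sigma\oplus 0$: the retraction used to split off the $R(-i)$ summand of the middle term annihilates the image of $\sigma\oplus 0$ precisely because $\phi_{u_0}\circ(\sigma\oplus 0)=0$, and the Schur-complement corrections lie in $\goth m^2$, so minimality survives.

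The genuine gap is the one you flag and then ``grant'': why is $\coker\phi_{u_0}$, for general $u_0$, again the \emph{saturated} ideal of an lCM curve with Rao module $M$, i.e.\ a point of $\GradAlg(H)$? Citing \cite{R} does not supply this for your particular family; it must be argued. It can be closed as follows. Since $M$ has finite length, $\widetilde M=0$, so the sheafified resolution of $M$ is exact and $\widetilde\sigma$ is a locally split injection; hence $\cF:=\coker(\widetilde L_4\to\widetilde L_3\oplus\widetilde P_2)$ is locally free of rank $r-1$, where $r=\rank P_1$, and is independent of $u$. Thus $\widetilde{\coker\phi_u}$ is the cokernel of a map of bundles $\cF\to\widetilde P_1$ of ranks $r-1$ and $r$; its degeneracy locus has expected codimension $2$, equals $X$ at $u=0$, and the condition ``codimension $\ge 2$'' is open on the base, so for $u$ in an open set the cokernel sheaf is the ideal sheaf $\cI_{X_u}$ (untwisted, by comparison with $u=0$) of a codimension-$2$ locally Cohen--Macaulay curve. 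Since $\pd_R\coker\phi_u\le 2$ forces $\depth_{\goth m}\coker\phi_u\ge 2$, one gets $\coker\phi_u=H^0_*(\cI_{X_u})=I_{X_u}$, so the ideal is saturated and $\depth R/I_{X_u}\ge 1$; constancy of the Hilbert function then places the whole family in $\GradAlg(H)$, and the Rao module of $X_u$ is computed from the unchanged tail $L_4\xrightarrow{\sigma}L_3$. With this supplement your sketch becomes a complete proof.
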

Note that the statement ``a generization $X'$ of $X$ in $\Hi ^p(\PP^3)$ with
constant postulation'' in \cite[Thm.\! 2.8]{Krao} really means ``a
generization $X'$ of $X$ in $\GradAlg(H)$''. This result applies to a
codimension-$2$ Cohen-Macaulay quotient $R/I_X$ by letting $M=0$, which
implies $L_3=L_4=0$, coinciding with Proposition~\ref{ghost} in this case.

\section{Upgrading of previous results}

In this section we generalize main theorems from \cite{K09, KM09} by using
Theorems~\ref{Amodulethm3} and \ref{Amodulethm5} for $\dim A \ge 1$, or more
precisely we use that the two conjectures in \cite{KM09} mentioned in the
Introduction now are theorems (for
$n-c>0$). %(in \cite{K10}, cf. \cite{FFnew}).
 %These results will weaken the assumptions of some main results
%in \cite{K09} and \cite{KM09} by mainly using the ``same'' prooofs as in these
%papers. 
Indeed the previous proofs relied on the part of the conjectures which was
proved (in \cite{KM}) at the time these papers were written, but we can, using
mainly the ``same'' proofs as in \cite{K09,KM09}, get stronger results
(the same conclusions under weaker assumptions). To better understand
the proofs presented here, it may be a good idea to consider the corresponding
proofs in \cite{K09} simultaneously.

We start, however, with \cite{KM09}. The following theorem was proved in
\cite[Thm.\;3.2]{KM09} under some assumptions (mainly ``$a_0 > b_t$, $a_{t+3}
> a_{t-2}$ and $n-c \ge 0$'') and it was in \cite[Cor.\;5.7]{K10} shown to be
true in general for $n-c \ge 1$ (under elsewhere weaker assumptions: $
W(\underline{b};\underline{a}) \ne \emptyset$ and $a_{i-2} \ge b_i$ for $i \ge
2$). For $n=c$, \cite[Thm.\;3.2]{KM09} is still the best result we have with
assumptions only on $a_{j}$ and $b_{i}$, and we can immediately generalize it
to

\begin{theorem} \label{codcdim0} Let $X \subset \PP^{n}$ 
 % $(X) \in W(\underline{b};\underline{a})$, 
be a general determinantal scheme
  of codimension $c$ and suppose $a_0 > b_t$ and $ \ a_{t+c-2}>a_{t-2} $. Then
  (cf.\! Remark \ref{dimWcA})% \eqref{maineq} holds and
  $$ \dim W(\underline{b};\underline{a})= \lambda _c+K_3+K_4+ \cdots
  +K_c \ .$$
\end{theorem}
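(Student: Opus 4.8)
The strategy is to reduce Theorem~\ref{codcdim0} to Theorem~\ref{Amodulethm3} by verifying its hypotheses, namely that a general standard determinantal $A=R/I_t(\cA)$ with $a_0>b_t$ and $a_{t+c-2}>a_{t-2}$ satisfies $_0\!\Hom_A(M,M)\simeq k$ and $_0\!\Ext_A^1(M,M)=0$, where $M=\coker\varphi^*$. Once these vanishing/rigidity statements are in place, Theorem~\ref{Amodulethm3} gives immediately that $\dim W_s(\underline{b};\underline{a})=\lambda_c+K_3+\cdots+K_c$, and by Proposition~\ref{WWs} the closures of $W_s$ and $W$ agree when $n\ge c$, which covers the range in question (for $n=c$ one still has $n-c=0\ge0$, and for $n-c=-1$ the scheme is a complete intersection, a case that can be handled separately or excluded by the hypothesis $a_0>b_t$ forcing $t\ge2$ minors to be nontrivial). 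So the whole content is the two $\Ext$-computations over $A$.

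First I would recall that $\cA$ general and $a_0>b_t$ forces the matrix to be minimal and the submaximal-minor ideal $I_{t-1}(\cA)$ to have codimension $c+1$ (so $A$ is good determinantal, in fact a generic complete intersection); then $\Hom_A(M,M)\simeq A$ by \cite[Lem.\,3.2]{KM}, whence $_0\!\Hom_A(M,M)\simeq A_0\simeq k$. The harder step is $_0\!\Ext_A^1(M,M)=0$. Here I would follow the inductive column-deletion machinery recorded in Section~2: using the exact sequences \eqref{Mi}, \eqref{Di} relating $M_i$, $D_i$, $D_{i+1}$ and the identification $K_{D_i}(n+1)\cong S_{i-1}M_i(\ell_i)$, one expresses $\Ext_A^1(M,M)$ in terms of cohomology of the $D_i$ and the modules $M_i$, $B_i$. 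The conditions $a_0>b_t$ (which makes all degree shifts $a_j-b_i$ positive) and $a_{t+c-2}>a_{t-2}$ (which controls the top graded pieces where ghost-type contributions could survive) are exactly what is needed to kill the relevant graded strands; this is the same mechanism as in \cite[Thm.\,3.2]{KM09}, and I would invoke that argument (or \cite[Thm.\,3.2]{KM}, now upgraded to a theorem for $n-c>0$) essentially verbatim, noting that the passage to the artinian boundary case $n=c$ is precisely what Theorem~\ref{Amodulethm3} was designed to absorb via the extra hypothesis $_0\!\Hom_A(M,M)\simeq k$, already checked above.

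The main obstacle I anticipate is verifying $_0\!\Ext_A^1(M,M)=0$ uniformly in $c$ and $n$ at the boundary $n=c$: the Eagon--Northcott/Buchsbaum--Rim complexes are no longer minimal-free resolutions over $A$ (only over $R$), so one must genuinely use the spectral sequence $E_2^{p,q}=\Ext_A^p(\Tor_q^R(A,M),M)\Rightarrow\Ext_R^{p+q}(M,M)$ of \eqref{specseq} together with Theorem~\ref{modulethm} (which controls $_0\!\Ext_R^1(M,M)$ and $_0\!\Hom_R(M,M)$) to transfer the $R$-side computation to the $A$-side. The degree bookkeeping — tracking which binomials $\binom{\,\cdot\,+n}{n}$ vanish once $a_0>b_t$ and $a_{t+c-2}>a_{t-2}$ — is the routine but delicate part; I would relegate it to a reference to the corresponding estimates in \cite{KM09} and \cite{KM}, since nothing genuinely new happens there beyond allowing $n=c$. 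Finally, I would remark that for $n-c\ge1$ the stronger statement (without the hypothesis on $a_{t+c-2}$) already follows from \cite[Cor.\,5.7]{K10}, so Theorem~\ref{codcdim0} is only needed for its content at $n=c$, which is exactly where the artinian refinement Theorem~\ref{Amodulethm3} supplies the missing input.
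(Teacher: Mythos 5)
Your plan hinges entirely on establishing $_0\!\Ext_A^1(M,M)=0$ for a general $X$ with $\dim X=0$ under the hypotheses $a_0>b_t$ and $a_{t+c-2}>a_{t-2}$, and this is exactly the step you do not actually supply. You propose to get it "essentially verbatim" from the argument of \cite[Thm.\,3.2]{KM09}, but that theorem proves the dimension formula by a completely different mechanism (the incidence/flag-scheme recursion of Proposition~\ref{main1}, whose input is the inequality \eqref{maineq} on $_0\!\hom_R(I_Y,I_{X/Y})$); it contains no computation of $\Ext_A^1(M,M)$. Nowhere in this paper, nor in \cite{K10}, is $_0\!\Ext_A^1(M,M)=0$ established in general for $\dim A=1$: the depth-based sufficient conditions in Theorem~\ref{Amodulethm3} require $\dim A\ge 2$, and in the low-dimensional range the paper only ever verifies these $\Ext$-vanishings example-by-example with Macaulay~2. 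Indeed the paper is careful \emph{not} to claim this vanishing under the hypotheses of Theorem~\ref{codcdim0} (if it held, Theorem~\ref{Amodulethm3} would also deliver the codimension bound, a strictly stronger conclusion than what is asserted), and the $c+1$ general points example in Section~4 shows that $_0\!\ext_A^1(M,M)$ can be nonzero ($=c-2$) for zero-dimensional determinantal schemes. So your reduction is not a proof; the key hypothesis is assumed rather than verified.

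The paper's actual proof sidesteps $\Ext_A^1(M,M)$ entirely: for $n-c\ge 1$ the formula is \cite[Cor.\,5.7]{K10}, and for $n=c$ one deletes the last column to obtain a \emph{curve} $Y=\Proj(B)$, to which \cite[Cor.\,5.7]{K10} applies and gives $\dim W(\underline{b};\underline{a'})=\lambda_{c-1}+K_3+\cdots+K_{c-1}$; one then descends to $X$ by Proposition~\ref{main1}, whose hypothesis \eqref{maineq} is exactly what $a_0>b_t$ and $a_{t+c-2}>a_{t-2}$ guarantee via Proposition~\ref{mainN1}(i) (= \cite[Prop.\,3.1]{KM09}). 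If you want to salvage your module-theoretic route, you would need an independent proof that $_0\!\Ext_A^1(M,M)=0$ for general such $X$ with $n=c$; as it stands that is an open verification, not a consequence of the cited results. The parts of your write-up that are sound are the observation that $a_0>b_t$ forces $X$ to be good determinantal (hence $_0\!\Hom_A(M,M)\simeq k$) and the reduction of $W$ to $W_s$ via Proposition~\ref{WWs}.
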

\begin{proof} This follows from \cite[Prop.\! 3.1]{KM09}, see
  Proposition~\ref{mainN1}(i) below, and \cite[Cor.\! 5.7]{K10}.
\end{proof}

Now, to generalize the main results of \cite{K09}, we first restate
\cite[Prop.\! 3.4]{K09}. With notations as in Sec.\! 2 (i.e. we delete the
last column of $\cA$ to get $\cA_{c-1}$ and the map $\varphi_{c-1}$ induced by
the transpose of $\cA_{c-1}$), let $\cB:=\cA_{c-1}$, $B:=R/I_t(\cB)$ and
$N:=\coker \varphi_{c-1}^*$. In this section we let $A:=R/I_t(\cA)$ be good
determinantal, i.e. $(A)\in W(\underline{b};\underline{a})$ where
$\underline{a} = a_0, a_1,..., a_{t+c-2}$, and as previously we assume that
{\it all entries of the deleted column(s) belong to} $\goth m$. It follows that
$(B) \in W(\underline{b};\underline{a'})$  with $\underline{a'} = a_0,
a_1,..., a_{t+c-3}$ by \cite{Bru} and the inclusions $I_t(\cB) \subset
I_t(\cA) \subset I_{t-1}(\cB)$. We set $X=\Proj(A)$ and
$Y=\Proj(B)$.

\begin{proposition}\label{main1} \ Let $c\ge 3$, let $(X) \in
  W(\underline{b};\underline{a})$
 % $X=\Proj(R/I_t(\cA))$, 
be general and suppose %the entries of the last column
  % of $\cA$ belong to $\goth m$. Moreover suppose \
  $ \dim W(\underline{b};\underline{a'}) \ge \lambda_{c-1}+ K_3 +
  K_4+...+K_{c-1} $ (and $\ge \lambda_{2}$ if
  $c=3$). % and $\depth_{I_{t-1}(\cB)}B \ge 2$ for a
  %general $Y=\Proj(B)$ of $W(\underline{b};\underline{a'})$, cf. the text
  %before Remark~\ref{dep}. 
 If
\begin{equation}\label{maineq}
  _0\!
  \hom_R(I_{Y},I_{X/Y}) \le  \sum _{j=0}^{t+c-3} \binom{a_j-a_{t+c-2}+n}{n} \ ,
\end{equation}
then \ \ $\dim W(\underline{b};\underline{a})= \lambda_c+ K_3 +
K_4+...+K_c.$ We also get equality in \eqref{maineq}, as well as $$\dim
W(\underline{b};\underline{a}) = \dim
W(\underline{b};\underline{a'})+\dim_k N(a_{t+c-2})_{0}-1-\ _0\!
\hom_R(I_{Y},I_{X/Y}).$$
\end{proposition}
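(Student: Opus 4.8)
\textbf{Proof plan for Proposition~\ref{main1}.}

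The plan is to work with the Hilbert flag scheme of pairs $Y \supset X$, or equivalently with the exact sequence \eqref{Di} for $i=c-1$, which realizes $\cI_{X/Y}$ as $\Hom_{B}(N(a_{t+c-2}),B)$ where $N=M_{c-1}$ in the notation of Sec.~2. First I would set up the deformation-theoretic comparison: deforming the pair $(X \subset Y)$ amounts to deforming $Y$ together with the regular section $B \to N(a_{t+c-2})$ of \eqref{Mi}, and the tangent/obstruction spaces of these functors sit in exact sequences built from $\Hom_R(I_Y,I_{X/Y})$, $\Hom_R(I_Y,B)$, and $\,_0\!\Hom(N(a_{t+c-2}),N(a_{t+c-2}))$. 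The key numerical input is the dimension count for $W(\underline b;\underline a')$ given by hypothesis together with Theorem~\ref{modulethm}/\eqref{dimWH}, which identifies $\dim W(\underline b;\underline a')$ with $\sum_j H_N(a_j) - \sum_i H_N(b_i) + 1$ for $j$ running $0,\dots,t+c-3$.

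Next I would carry out the main computation. From the section sequence \eqref{Mi} (with $i=c-1$) twisted appropriately, $\dim_k N(a_{t+c-2})_0 = \dim_k (M_c)(a_{t+c-2})_0 + 1 = H_M(a_{t+c-2}) + 1$ — here $M=M_c$ is the cokernel associated to the full matrix $\cA$ — and more generally one relates the Hilbert function of $N=M_{c-1}$ to that of $M=M_c$ through \eqref{Mi}. Plugging the difference $H_N(a_j) - H_M(a_j)$ and $H_N(b_i) - H_M(b_i)$ into the formula \eqref{dimWH} for $\dim W(\underline b;\underline a)$ versus that for $\dim W(\underline b;\underline a')$ produces exactly the term $\sum_{j=0}^{t+c-3}\binom{a_j - a_{t+c-2}+n}{n}$, which is $\sum_{j=0}^{t+c-3} H_R(a_j - a_{t+c-2})$, plus the correction $\dim_k N(a_{t+c-2})_0 - 1$ coming from the new column, plus a $-1$ from the torus quotient. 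The inequality \eqref{maineq} is then precisely the statement that a certain natural surjection of tangent spaces is an isomorphism (no ``extra'' deformations of the pair beyond those coming from deforming $\cA$), because the left-hand side is a priori $\le$ the right-hand side and any strict inequality would force $\dim W(\underline b;\underline a)$ to be strictly larger than $\lambda_c + K_3 + \cdots + K_c$ — contradicting the upper bound on $\dim W_s(\underline b;\underline a)$ one gets from Theorem~\ref{compthm}(i) applied (with the $\,_0\!\Hom_A(M,M)\simeq k$ hypothesis, or in the good-determinantal range where it is automatic) together with $\overline{W_s} = \overline{W}$ from Proposition~\ref{WWs}. Forcing equality everywhere then yields simultaneously the dimension formula $\dim W(\underline b;\underline a) = \lambda_c + K_3 + \cdots + K_c$, equality in \eqref{maineq}, and the displayed relation
\[
\dim W(\underline{b};\underline{a}) = \dim W(\underline{b};\underline{a'}) + \dim_k N(a_{t+c-2})_0 - 1 - \,_0\!\hom_R(I_{Y},I_{X/Y}).
\]

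I expect the main obstacle to be the bookkeeping in the exact sequence of tangent spaces of the flag functor: one must carefully track how $\,_0\!\Hom_R(I_Y, I_{X/Y})$, the space of sections $\,_0\!\Hom_R(B, N(a_{t+c-2})) = N(a_{t+c-2})_0$ deforming the regular section, and $\,_0\!\Hom_R(I_Y, B) = \dim W(\underline b; \underline a')$-related data fit together, and verify that the relevant higher obstruction groups vanish (or are controlled by $\depth_{I_{t-1}(\cB)}B \ge 2$, which guarantees $Y$ behaves well and that $\,_0\!\Hom_R(I_Y,B)$ is the full tangent space of $\GradAlg$ at $(Y)$). This is where the hypothesis $\depth_{I_{t-1}(\cB)}B \ge 2$ and the lower bound on $\dim W(\underline b;\underline a')$ enter essentially: the former ensures the flag scheme projects smoothly (generically) onto $\GradAlg$ via $Y$, the latter ensures $W(\underline b;\underline a')$ already has the expected dimension so that the comparison of dimensions is tight. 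I would follow the structure of the proof of \cite[Prop.~3.4]{K09} closely, substituting the now-available Theorems~\ref{Amodulethm3} and \ref{Amodulethm5} (and Theorem~\ref{compthm}) wherever the earlier argument invoked only the partially-proved conjectures of \cite{KM09}.
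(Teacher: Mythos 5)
Your plan follows essentially the same route as the proof the paper points to (\cite[Prop.\;3.4]{K09}): the incidence/flag scheme of pairs $X\subset Y$ with its two projections, the fibre over $(Y)$ of dimension $\dim_k N(a_{t+c-2})_0-1$ coming from the regular section of \eqref{Mi}, the fibre of the other projection bounded by $\,_0\!\hom_R(I_Y,I_{X/Y})$, and the Hilbert-function identity relating $H_N$ to $H_M$ via \eqref{Mi} that turns the correction terms into $\sum_j\binom{a_j-a_{t+c-2}+n}{n}$; your bookkeeping there is correct and squeezing against an upper bound forces equality everywhere.

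One step is misattributed, though: the upper bound $\dim W_s(\underline{b};\underline{a})\le\lambda_c+K_3+\cdots+K_c$ cannot be taken from Theorem~\ref{compthm}(i), since that theorem also requires $\,_0\!\Ext_A^1(M,M)=0$, which is not among the hypotheses of Proposition~\ref{main1} (and if it were available you would get the dimension formula outright, making the whole flag-scheme argument unnecessary). The upper bound you need is the unconditional one coming from the surjection of the open set of matrices $U\subset\VV$ onto $W_s(\underline{b};\underline{a})$ (Proposition~\ref{WWs}, Lemma~\ref{defalpha}) together with the count $\dim\,_0\!\Ext^1_R(M,M)=\lambda$ of Theorem~\ref{modulethm} — equivalently, the bound proved in \cite{KM}. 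With that substitution the argument closes.
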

See \cite[Prop.\! 3.4]{K09} for a proof, and notice that since $\dim A \ge 1$
and $X$ is general in $ W(\underline{b};\underline{a})$, the assumption
$\depth_{I(Z)} B \ge 2$ of \cite[Prop.\! 3.4]{K09} holds letting
$Z:=V(I_{t-1}(\cB))$.

Then \cite[Prop.\! 3.1]{KM09} is mainly (i) in the following.
\begin{proposition}\label{mainN1} %Let $X \subset \PP^{n}$ 
  % be general in $ W(\underline{b};\underline{a})$ of codimension $c$.
  % \\[-3.5mm]
  {\rm (i)} If $a_0 > b_t$ and $ \ a_{t+c-2}>a_{t-2} $, then \eqref{maineq}
  holds for $X$ general. \\[-3.5mm]

  {\rm (ii)} If \ $ _0\! \Ext_B^i(N,B(- a_{t+c-2})) =0$ for $i=1$ and $2$,
  then \eqref{maineq} holds. In particular \eqref{maineq} holds provided
  $\depth_{I_{t-1}(\cB)}B \ge 3$ (and $\Ext_B^1(N,B) =0$ provided 
  $\depth_{I_{t-1}(\cB)}B \ge 2$).
\end{proposition}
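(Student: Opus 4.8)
The plan is to prove the two statements by relating $_0\!\hom_R(I_Y,I_{X/Y})$ to the cohomology of the module $N=\coker\varphi_{c-1}^*$ via the regular section of \eqref{Mi}. Recall from \eqref{Mi} (with $i=c-1$) that $I_{X/Y}\simeq\Hom_B(N(a_{t+c-2}),B)$ fits in a short exact sequence coming from the regular section $B\to N(a_{t+c-2})$; dualizing into $B$ and using that $N$ is a maximal Cohen--Macaulay $B$-module (hence so is $I_{X/Y}$ by \eqref{Di}), one obtains an exact sequence of the form
\[
0\to I_{X/Y}\to B(-a_{t+c-2})\oplus(\text{free summands of }G_{c-1}^*)\to N^\vee\to 0
\]
or, more directly, one applies $\Hom_R(I_Y,-)$ to the twisted section sequence
\[
0\to B\to N(a_{t+c-2})\to N'(a_{t+c-2})\to 0
\]
(where $N'=M_c$) and compares with $\Hom_R(I_Y,B(a_{t+c-2}))$. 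The point is that $\Hom_R(I_Y,B)\simeq\Hom_B(I_Y/I_Y^2,B)$ is controlled, for $Y$ general and $B$ of positive depth, and $_0\!\hom_R(I_Y,B(a_{t+c-2}))=\sum_{j=0}^{t+c-3}\binom{a_j-a_{t+c-2}+n}{n}+(\text{correction from }b_i)$. The bound \eqref{maineq} will follow once the ``error terms'' are identified with $_0\!\Ext_B^1(N,B(-a_{t+c-2}))$ and $_0\!\Ext_B^2(N,B(-a_{t+c-2}))$.

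For (ii): the cleanest route is to apply $\Hom_B(-,B(-a_{t+c-2}))$ to the defining sequence \eqref{Di} (with $i=c-1$), namely $0\to I_{X/Y}\to B\to A\to 0$ with $I_{X/Y}\simeq\Hom_B(N(a_{t+c-2}),B)$. Since $N$ is MCM over $B$, duality gives $\Hom_B(\Hom_B(N(a_{t+c-2}),B),B)\simeq N(a_{t+c-2})$ and the higher $\Ext_B^i$ of $I_{X/Y}$ into $B$ vanish in the relevant range; combined with $_0\!\Ext_B^i(N,B(-a_{t+c-2}))=0$ for $i=1,2$ one gets that $_0\!\hom_R(I_Y,I_{X/Y})$ equals exactly the right-hand side of \eqref{maineq} (an equality, not just $\le$), which is what Proposition~\ref{main1} wants. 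The ``in particular'' clauses are then standard local-cohomology estimates: $\depth_{I_{t-1}(\cB)}B\ge 3$ forces $_0\!\Ext_B^i(N,B)=0$ for $i=1,2$ because $N$ is MCM and locally free on the locus where $I_{t-1}(\cB)$ does not vanish, so $\Ext_B^i(N,B)$ is supported on $V(I_{t-1}(\cB))$ and vanishes below the depth bound (this is exactly the type of vanishing used in \cite[Thm.\ 4.5]{K10}); likewise $\depth\ge 2$ handles $\Ext_B^1$.

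For (i): here one cannot assume the depth condition on $I_{t-1}(\cB)$, so instead I would argue as in \cite[Prop.\ 3.1]{KM09}. Under $a_0>b_t$ the matrix $\cB$ can be taken with positive-degree entries, and under $a_{t+c-2}>a_{t-2}$ the relevant graded piece $_0\!\Hom_R(I_Y,I_{X/Y})$ is forced down by a direct Hilbert-function count: one writes $I_{X/Y}$ via the Buchsbaum--Rim/Eagon--Northcott resolution of $N$ (minimal since $Y$ is general, after the ghost-term reductions of Section~5 if necessary), computes $_0\!\hom_R(I_Y,-)$ term by term against this resolution, and checks that the alternating sum is bounded by $\sum_{j=0}^{t+c-3}\binom{a_j-a_{t+c-2}+n}{n}$ precisely when $a_{t+c-2}>a_{t-2}$ — the inequality $a_{t+c-2}>a_{t-2}$ being exactly what makes the ``overshoot'' binomials $\binom{a_j-a_{t+c-2}+n}{n}$ for $j\le t-2$ vanish. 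This is the computational heart and I would simply cite the argument of \cite[Prop.\ 3.1]{KM09}, noting that nothing there used $n>c$, so it carries over verbatim to the artinian case $n=c$.

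The main obstacle I anticipate is (ii): making precise that $_0\!\Ext_B^i(N,B(-a_{t+c-2}))=0$ for $i=1,2$ genuinely upgrades \eqref{maineq} from an inequality to an equality, i.e. tracking that \emph{no} other terms enter when one dualizes the section sequence \eqref{Mi} and \eqref{Di}. This requires being careful that $I_{X/Y}=\Hom_B(N(a_{t+c-2}),B)$ has vanishing $\Ext_B^{\ge 1}(-,B)$ in the range used (a consequence of $N$ being a second syzygy / MCM over the Cohen--Macaulay ring $B$), and that the map $I_Y\twoheadleftarrow\oplus R(-n_k)$ given by the maximal minors of $\cB$ lets one replace $\Hom_R(I_Y,-)$ by $\Hom_R(\oplus R(-n_k),-)$ up to a kernel that is also controlled. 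The depth translation $\depth_{I_{t-1}(\cB)}B=\dim B-\dim R/I_{t-1}(\cB)$, already recorded before Remark~\ref{dep}, then turns the $\Ext$-vanishing into the stated numerical hypotheses.
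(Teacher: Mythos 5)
Part (i) is fine: like the paper, you simply cite \cite[Prop.~3.1]{KM09}, and the paper indeed does nothing more for (i). The \emph{in particular} clause of (ii) is also essentially right: the depth hypotheses force $\ _0\!\Ext_B^i(N,B(-a_{t+c-2}))=0$ by the support argument you sketch, which is how the paper argues (via \cite[Thm.~4.5]{K10}).

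The gap is in the main claim of (ii). The quantity to be bounded is $\ _0\!\hom_R(I_Y,I_{X/Y})$, a Hom \emph{out of the $R$-ideal $I_Y$}, but your proposed computation --- applying $\Hom_B(-,B(-a_{t+c-2}))$ to \eqref{Di} and invoking MCM duality for $N$ --- never produces a group in which $I_Y$ appears, so it cannot reach the left-hand side of \eqref{maineq}; likewise, applying $\Hom_R(I_Y,-)$ to the section sequence \eqref{Mi} gives $\Hom_R(I_Y,N(a_{t+c-2}))$, not $\Hom_R(I_Y,\Hom_B(N(a_{t+c-2}),B))=\Hom_R(I_Y,I_{X/Y})$. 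The missing device is the change-of-rings spectral sequence $E_2^{p,q}=\Ext_B^p(\Tor_q^R(B,N),B)\Rightarrow\Ext_R^{p+q}(N,B)$ (the analogue of \eqref{specseq} with $(A,M)$ replaced by $(B,N)$ and values in $B$): since $\Tor_1^R(B,N)\simeq I_Y\otimes_R N$, its $E_2^{0,1}$-term is $\Hom_R(I_Y,\Hom_B(N,B))\simeq\Hom_R(I_Y,I_{X/Y}(a_{t+c-2}))$ by \eqref{Di}, and the five-term sequence sandwiches $E_2^{0,1}$ between precisely the two groups $\Ext_B^1(N,B)$ and $\Ext_B^2(N,B)$ whose vanishing in degree $-a_{t+c-2}$ you assume. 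This yields $\ _0\!\Hom_R(I_Y,I_{X/Y})\simeq\ _{v}\!\Ext_R^1(N,B)$ for $v=-a_{t+c-2}$, and the latter is then evaluated from the Buchsbaum--Rim resolution of $N$ exactly as in \eqref{homext}: since $\ _v\!\Hom_R(F^*,B)=\oplus_i B_{(b_i-a_{t+c-2})}=0$, one gets $\ _v\!\ext_R^1(N,B)=\sum_{j=0}^{t+c-3}\binom{a_j-a_{t+c-2}+n}{n}$, which is \eqref{maineq} (with equality, as you predicted). You did correctly anticipate in your opening paragraph that the ``error terms'' are the two groups $\ _0\!\Ext_B^i(N,B(-a_{t+c-2}))$, but the argument you actually give for (ii) does not implement that identification, and without the spectral sequence the dualization of \eqref{Di} alone will not do it.
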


\begin{proof}(ii) The arguments we apply in the proof are very similar to those
  needed in \eqref{specseq}, \eqref{specseq1} and \eqref{homext} to get
  related statements, except that we now use the spectral sequence
  $E_2^{p,q}:= \Ext_B^p(\Tor_q^R(B,N),B) \ \Rightarrow \ \Ext_R^{p+q}(N,B)$
  and its corresponding 5-term exact
  sequence. %where $E_2^{0,1} \simeq \ \Hom_R(I_Y,\Hom_B(N,B))$
  Indeed due to assumptions and \eqref{Di} we get isomorphisms
$$ _v\!
\Ext_R^1(N,B) \simeq \ (E_2^{0,1})_v \simeq \ _v\!\Hom_R(I_Y,\Hom_B(N,B))
\simeq \ _v\!\Hom_R(I_Y,I_{X/Y}(a_{t+c-2})) \, $$ for $v= -a_{t+c-2}$,.
Moreover as in \eqref{homext}, cf. \eqref{gradedmorfismo}, we get for this $v$
the exact sequence
\begin{equation} \label{homextN}
  0 \to  \ _v\! \Hom_R(N,B) \to\ _v\! \Hom_R(F^*,B) \to \ _v\!
  \Hom_R(G_{c-1}^*,B) \to \ _v\! \Ext_R^1(N,B) \to 0
\end{equation}
where $ _v\! \Hom_R(F^*,B) \simeq \oplus_{i=1}^{t}B_{(b_i+v)}=0$ and $ _v\!
\hom_R(G_{c-1}^*,B) \simeq \dim (\oplus_{j=0}^{t+c-3}B_{(a_j+v)}) = \sum
_{j=0}^{t+c-3} \binom{a_j-a_{t+c-2}+n}{n}$. This implies \eqref{maineq}.

The above arguments are used in \cite[Rem.\! 3.4]{KM15} and the proof of
\cite[Prop.\! 3.5]{KM15} from which we get \eqref{maineq} under the assumption
$\depth_{I_{t-1}(\cB)}B \ge 4$. To see that we can weaken the depth assumption
to $\depth_{I_{t-1}(\cB)}B \ge 3$, we really need to refine the argument.
Following, however, the proof of \cite[Thm.\! 4.5]{K10} exactly as in the
paragraph after (4.3) in \cite{K10}, we get $ \Ext_B^i(N,B) =0$ for $i=1$ and
$2$ (resp. for $i=1$) under the assumption $\depth_{I_{t-1}(\cB)}B \ge 3$
(resp. $\depth_{I_{t-1}(\cB)}B \ge 2$), and we are done.
\end{proof}

\begin{remark} Using Propositions~\ref{main1} and \ref{mainN1}(ii) one may
  reprove the dimension formula
  $ \dim W(\underline{b};\underline{a})= \lambda _c+K_3+K_4+ \cdots +K_c \ $
  of Theorem~\ref{Amodulethm3} in the case $n-c \ge 1$, $a_{i-2} \ge b_i$ for
  $i\ge 2$ and $W(\underline{b};\underline{a}) \ne \emptyset$ by using the
  recursive strategy of successively deleting columns of $\cA$ from the
  right-hand side, see Introduction and Remark \ref{dimWcA}. In
  \cite{KM15} we pointed out that \cite[Cor.\! 3.19]{KM15} shows that the
  recursive strategy also applies to reprove the generic smoothness of
  $ W(\underline{b};\underline{a})$ of Theorem~\ref{Amodulethm5} in the case
  $n-c \ge 2$ and $a_{0} \ge b_t$ (due to Remark~\ref{dep}, the latter
  assumption may here be weakened to $a_{i-\min (3,t)}\ge b_{i}$ for
  $\min (3,t)\le i \le t$). This means that we have two rather different
  proofs for the two conjectures of \cite{KM09}.
\end{remark}
With the above propositions we start by considering determinantal curves. In
this and later results we denote by $\tau_{X/Y}$
the following morphism induced by  $I_{X/Y}
\hookrightarrow B$: 
\begin{equation*} %\label{tau} 
 \tau_{X/Y} \ : \ _0\!
  \Ext^1_{B}(I_{Y}/I^2_{Y},I_{X/Y}) \rightarrow \ _0\! \Ext
  ^1_{B}(I_{Y}/I^2_{Y},B).
\end{equation*}
A key result of \cite{K09} (Prop.\! 4.6) shows that if $\ker \tau_{X/Y}=0$,
$\depth_{I_{t-1}(\cB)}B \ge 3$ and $X$ good determinantal, then the property
``every deformation of a determinantal scheme comes from deforming the
matrix'' is transferred from $Y$ to $X$. Recalling that
$\dim W(\underline{b};\underline{a})= \lambda_c + K_3+...+K_c$ is now a
theorem when $n-c=1$, %(cf. Remark \ref{dimWcA} for assumptions to compute 
%$\dim W(\underline{b};\underline{a})$), 
the following result generalizes \cite[Prop.\! 4.15]{K09} to arbitrary
$c \ge 3$. (For $c=2$ we have a more complete result with stronger conclusions
in Theorem~\ref{elling}).
%by dropping the assumption $c \le 6$.
% , i.e. leads to the same conclusions under weaker assumptions.
%
\begin{proposition} \label{Dim1Prop} Let $X=\Proj(A)\subset \PP^n$,
  $A=R/I_t(\cA)$, be general in $ W(\underline{b};\underline{a})$ and suppose
  $a_{i-\min (3,t)}\ge b_{i}$ for $\min (3,t)\le i \le t$, $c \ge 3$ and
  $\dim X =n-c=1$. If \ $Y=\Proj(B)$ is defined by the vanishing of the
  maximal minors of the matrix obtained by deleting the last column 
%(which we suppose do not contain units) 
  of ${\cA}$, then the following statements hold:

  \vskip 2mm {\rm (i)} If $\tau_{X/Y}:\, _0\!
  \Ext^1_{B}(I_{Y}/I^2_{Y},I_{X/Y}) \rightarrow \! _0\! \Ext
  ^1_{B}(I_{Y}/I^2_{Y},B)$ is injective, then $X$ is unobstructed and
  $\overline{ W(\underline{b};\underline{a})}$ is a generically smooth
  irreducible component of \ $\Hi ^p(\PP^{n})$. Moreover  
  every deformation of $X$ comes from deforming ${\cA}$.

  {\rm (ii)} If $_0\! \Ext^1_{A}(I_{X}/I^2_{X},A)=0$, then $X$ is
  unobstructed,  and $$\codim_{\Hi ^p(\PP^{n})}\overline{
    W(\underline{b};\underline{a})}=\dim \ker \tau_{X/Y} - \ _0\! \ext
  ^1_{B}(I_{X/Y},A) \;.$$

  {\rm (iii)} We always have 
\begin{equation} \label{coiii}
\codim_{\Hi ^p(\PP^{n})} \overline{
    W(\underline{b};\underline{a})} \le \dim \ker \tau_{X/Y}. 
\end{equation}
Moreover if $\ _0\! \Ext ^1_{B}(I_{X/Y},A)=0$, then we have equality in
\eqref{coiii} if and only if $X$ is unobstructed.
 \end{proposition}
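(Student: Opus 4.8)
The plan is to run the argument of \cite[Prop.\! 4.15]{K09}, comparing $X$ with the intermediate scheme $Y$ through the Hilbert flag scheme of pairs; the new point is that everything one needs to know about $Y$ is now a theorem. First I would collect the inputs. Since $\dim A=2$ and $\dim B=3$ we have $H^{1}_{\goth m}(A)=H^{1}_{\goth m}(B)=0$, so by \eqref{Grad} we may work on $\Hi^{p}(\PP^{n})$ at $(X)$ and, with $p'$ the Hilbert polynomial of $B$, on $\Hi^{p'}(\PP^{n})$ at $(Y)$. As $X$ is general, its matrix $\cA$ is general, hence so is $\cB=\cA_{c-1}$ and thus $Y$ is general in $W(\underline{b};\underline{a'})$. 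Applying Remark~\ref{dep} with $\alpha=3$ to the hypothesis $a_{i-\min(3,t)}\ge b_i$ and using $\dim X=1$, all the relevant non-l.c.i.\ loci have codimension $>\dim X$ in $X$, hence are empty: $X\hookrightarrow\PP^{n}$, $Y\hookrightarrow\PP^{n}$ and $X\hookrightarrow Y$ are l.c.i.'s, $X$ is an effective Cartier divisor on $Y$, and $\depth_{I_{t-1}(\cB)}B=\dim B=3$ (because $\dim R/I_{t-1}(\cB)=0$). Since $B$ has codimension $c-1$ with $\dim B=3$ and $a_{i-\min(3,t)}\ge b_i$, Theorem~\ref{Amodulethm5} — or Theorem~\ref{elling} when $c=3$, in which case $Y$ has codimension $2$ — shows that a general $Y=\Proj(B)$ is unobstructed, that $\overline{W(\underline{b};\underline{a'})}$ is a generically smooth irreducible component of $\Hi^{p'}(\PP^{n})$ of dimension $\lambda_{c-1}+K_3+\dots+K_{c-1}$, and that every deformation of $Y$ comes from deforming $\cB$. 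Finally, $\dim A=2$ and $\dim R/I_{t-1}(\cB)=0$ put $A$ in the scope of Theorem~\ref{Amodulethm3}, giving $\dim W_s(\underline{b};\underline{a})=\lambda:=\lambda_c+K_3+\dots+K_c$, and $\overline{W_s(\underline{b};\underline{a})}=\overline{W(\underline{b};\underline{a})}$ by Proposition~\ref{WWs}.

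For (i), $\tau_{X/Y}$ injective means $\ker\tau_{X/Y}=0$; as $X$ is good determinantal, $\depth_{I_{t-1}(\cB)}B\ge 3$, and every deformation of $Y$ comes from deforming $\cB$, \cite[Thm.\! 4.6]{K09} transfers the property to $X$: every deformation of $X$ comes from deforming $\cA$. Lemma~\ref{unobst} then gives that $X$ is unobstructed and that $\overline{W_s(\underline{b};\underline{a})}=\overline{W(\underline{b};\underline{a})}$ is an irreducible component of $\GradAlg(H)\simeq\Hi^{p}(\PP^{n})$; since unobstructedness is open and $X$ is general, $\Hi^{p}(\PP^{n})$ is generically smooth along this component.

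For (ii) and (iii) I would use the Hilbert flag scheme $D$ of flags $X'\subset Y'$ as in \cite[\S 4]{K09}, with the projections $q_1\colon D\to\Hi^{p'}(\PP^{n})$, $(X'\subset Y')\mapsto(Y')$, and $q_2\colon D\to\Hi^{p}(\PP^{n})$, $(X'\subset Y')\mapsto(X')$, together with the tangent and obstruction exact sequences there. The fibre of $q_1$ over $(Y)$ is the Hilbert scheme of the Cartier divisor $X$ on $Y$, with tangent space ${}_0\!\Hom_B(I_{X/Y},A)$ and obstruction space ${}_0\!\Ext^1_B(I_{X/Y},A)$; since the base $\Hi^{p'}(\PP^{n})$ is smooth at $(Y)$, these are the only obstructions of $D$ at $(X\subset Y)$. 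On the other hand $\ker\tau_{X/Y}$ is precisely the cokernel of the restriction ${}_0\!\Hom_R(I_Y,B)\to{}_0\!\Hom_B(I_Y/I_Y^2,A)$, and through the natural map ${}_0\!\Hom_R(I_X,A)\to{}_0\!\Hom_B(I_Y/I_Y^2,A)$ it controls the cokernel of the tangent map $dq_2$; the cokernel of the induced map ${}_0\!\Hom_R(I_X,A)\to\ker\tau_{X/Y}$ embeds in ${}_0\!\Ext^1_B(I_{X/Y},A)$. Combined with $\dim_{(X)}\overline{q_2(D)}=\dim\overline{W(\underline{b};\underline{a})}=\lambda$, this dimension count yields ${}_0\!\hom_R(I_X,A)\le\lambda+\dim\ker\tau_{X/Y}$, the defect being ${}_0\!\ext^1_B(I_{X/Y},A)$. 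Since $\codim_{\Hi^{p}(\PP^{n})}\overline{W(\underline{b};\underline{a})}=\dim_{(X)}\Hi^{p}(\PP^{n})-\lambda$ and $\dim_{(X)}\Hi^{p}(\PP^{n})\le{}_0\!\hom_R(I_X,A)$ with equality exactly when $X$ is unobstructed, this gives \eqref{coiii}; if moreover ${}_0\!\Ext^1_B(I_{X/Y},A)=0$ the inequality above becomes an equality, proving the last clause of (iii); and if ${}_0\!\Ext^1_A(I_X/I_X^2,A)=0$ then $X$ is unobstructed for free, so the inequality sharpens to the asserted equality $\codim_{\Hi^{p}(\PP^{n})}\overline{W(\underline{b};\underline{a})}=\dim\ker\tau_{X/Y}-{}_0\!\ext^1_B(I_{X/Y},A)$ of (ii).

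The main obstacle is the bookkeeping in the last paragraph: one has to identify (not merely bound) the cokernel of $dq_2$, control the fibre of $q_2$ over $(X)$ — the family of surfaces $Y'$ through the fixed curve $X$ — and confirm that $q_2$ maps the component of $D$ through $(X\subset Y)$ onto $\overline{W(\underline{b};\underline{a})}$ with the expected dimension $\lambda$, so that the tangent--obstruction exact sequences really close up; the hypotheses ${}_0\!\Ext^1_B(I_{X/Y},A)=0$ in (iii) and ${}_0\!\Ext^1_A(I_X/I_X^2,A)=0$ in (ii) are exactly what make these sequences collapse. A secondary, routine point is to check that the flag-scheme exact sequences of \cite[\S 4]{K09}, written there for small $c$, go through for all $c\ge 3$ — they do, because the only inputs specific to small $c$ were the properties of $Y$ now furnished in general by Theorems~\ref{Amodulethm5} and \ref{elling}.
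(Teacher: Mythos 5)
Your proposal is correct and follows essentially the same route as the paper: part (i) via Theorem~\ref{Amodulethm5} applied to $Y$, the transfer result \cite[Thm.\! 4.6]{K09} and Lemma~\ref{unobst}, and parts (ii)--(iii) via the Hilbert flag scheme argument of \cite[Prop.\! 4.15]{K09}, now available for all $c\ge 3$ because the needed facts about $Y$ and the dimension of $W(\underline{b};\underline{a})$ are theorems. The only cosmetic difference is that the paper channels the dimension count through the final formula of Proposition~\ref{main1} (guaranteed by Proposition~\ref{mainN1}(ii)), whereas you invoke Theorem~\ref{Amodulethm3} directly and recover the equivalent fibre-dimension control inside the flag-scheme bookkeeping.
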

 \begin{proof} Thanks to Proposition~\ref{mainN1}(ii) the proof is the
   ``same'' as for \cite[Prop.\! 4.15]{K09}. Indeed for (i) we apply
   Theorem~\ref{Amodulethm5} onto $ W(\underline{b};\underline{a'}) \ni (Y)$
   instead of the corresponding result of \cite{K09} which required $c-1 \le
   4$ and we get (i) from \cite[Thm.\! 4.6]{K09} and Lemma~\ref{unobst}. For
   (ii) and (iii) we need the final dimension formula of
   Proposition~\ref{main1} to use the proof of \cite{K09}. Using
   Proposition~\ref{mainN1}(ii) we get the mentioned dimension formula and we
   are done.
 \end{proof}
 We consider now zero dimensional determinantal schemes ($n-c=0$). In this
 case we also need to consider the morphism 
 \begin{equation} \label{ro} \rho^1:=\rho^1_{X/Y}: \
   _0\!\Ext^1_B(I_{X/Y},I_{X/Y})\to \ _0\!\Ext^1_B(I_{X/Y},B)
\end{equation}
induced by $I_{X/Y} \hookrightarrow B$. In addition to \cite[Prop.\! 4.6]{K09}
which states that $\ker \tau_{X/Y}=0$, $\ker \rho^1=0$ and
$\depth_{I_{t-1}(\cB)}B = 2 $ transfer the property ``every deformation of a
determinantal ring comes from deforming the matrix'' from $B$ (or $Y$) to $A$,
we need a variation of \cite[Prop.\! 4.6]{K09} which don't require
$\ker \rho^1=0$ (see \cite[Prop.\! 4.13]{K09} for the details). Below we
generalize \cite[Thm.\! 4.19]{K09} by replacing the assumption $4 \le c \le 6$
in \cite[Thm.\! 4.19]{K09} by $c \ge 4$. Note that we denote $\GradAlg(H)$ by
$ \Hi ^H(\PP^n)$ since $\dim A = 1$, cf. the text before \eqref{Grad} and see
Remark \ref{dimWcA} for computing $\dim W(\underline{b};\underline{a})$. We refer to \cite{K09} for examples. %Indeed we have the following.

\begin{theorem} \label{Dim0Thm} Let $X=\Proj(A) \subset \PP^n $,
  $A=R/I_t(\cA)$, be general in $ W(\underline{b};\underline{a})$ and let
  $Y=\Proj(B)$ and $ V=\Proj(C)$ be defined by the vanishing of the maximal
  minors of ${\cB}$ and ${\cC}$ respectively where ${\cB}$ (resp. ${\cC}$) is
  obtained by deleting the last column of ${\cA}$ (resp. ${\cB}$). Suppose
  $\dim X =n-c=0$, $a_{i - \min (3,t)}\ge b_{i}$ for $\min (3,t)\le i \le t$
  and suppose that \eqref{maineq} (cf.\! Proposition~\ref{mainN1}) holds.
  Moreover suppose
  $$ {\rm \ either \ \ } c=3 {\rm \ \ or \ \ [ } \ c \ge 4 \ \ {\rm and \ }
  \ker \tau_{Y/V} = 0 {\rm \ ] } . $$
  Then \ $\dim W(\underline{b};\underline{a})= \lambda_c + K_3+...+K_c$ \ and
  the following statements are true:

  \vskip 2mm {\rm (i)} If both
  $\rho^1: \, _0\!\Ext^1_B(I_{X/Y},I_{X/Y})\to \, _0\!\Ext^1_B(I_{X/Y},B)$ and
  $\tau_{X/Y}:\, _0\! \Ext^1_{B}(I_{Y}/I^2_{Y},I_{X/Y}) \rightarrow \,_0\!
  \Ext ^1_{B}(I_{Y}/I^2_{Y},B)$
  are injective, then $A$ is unobstructed and
  $\overline{ W(\underline{b};\underline{a})}$ is a generically smooth
  irreducible component of \! $\Hi^H(\PP^n)$. Moreover every deformation of
  $A$ comes from deforming ${\cA}$.
%.g.\ $\codim_{\Hi ^p(\PP^{n})}\overline{ W(\underline{b};\underline{a})}=0$.

   {\rm (ii)} If $\ _0\! \Ext ^1_{B}(I_{X/Y},A)=0$ and \ $ \ker \tau_{X/Y}=
   0$, then $\overline{ W(\underline{b};\underline{a})}$ belongs to a unique
   generically smooth irreducible component $Q$ of $\Hi^H(\PP^n)$ and the
   codimension of $\overline{ W(\underline{b};\underline{a})}$ in
   $\Hi^H(\PP^n)$ is $\dim \ker \rho^1$. Indeed $A$ is unobstructed and $$\dim
   Q =\lambda_c + K_3+...+K_c + \dim \ker \rho^1 . $$

   {\rm (iii)} If \ $_0\! \Ext^1_{A}(I_{X}/I^2_{X},A)=0$, then $A$ is
   unobstructed and $$\codim_{\Hi^H(\PP^n)}\overline{
     W(\underline{b};\underline{a})}=\dim \ker \rho^1 + \dim \ker \tau_{X/Y}-
   \ _0\! \ext ^1_{B}(I_{X/Y},A) .$$

   {\rm (iv)} We always have \ $\codim_{\Hi^H(\PP^n)}\overline{
     W(\underline{b};\underline{a})} \le \dim \ker \rho^1 + \dim \ker
   \tau_{X/Y}$. \\
   Suppose $\ _0\! \Ext ^1_{B}(I_{X/Y},A)=0$. Then we
   have $$\codim_{\Hi^H(\PP^n)}\overline{ W(\underline{b};\underline{a})} =
   \dim \ker \rho^1+ \dim \ker \tau_{X/Y} \ $$ if and only if $A$ is
   unobstructed.
 \end{theorem}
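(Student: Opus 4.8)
The plan is to follow the strategy of the proof of \cite[Thm.\! 4.19]{K09}, which is itself built on the liaison-type exact sequences relating the pair $X \subset Y$ to the pair $Y \subset V$, now available in the generality $c \ge 3$ thanks to Proposition~\ref{Dim1Prop} (the determinantal-curve case) and Proposition~\ref{main1} together with Proposition~\ref{mainN1}. First I would establish the dimension formula $\dim W(\underline{b};\underline{a}) = \lambda_c + K_3 + \cdots + K_c$: since \eqref{maineq} is assumed, Proposition~\ref{main1} reduces this to knowing $\dim W(\underline{b};\underline{a'}) = \lambda_{c-1} + K_3 + \cdots + K_{c-1}$ for a general $Y=\Proj(B)$, which follows from Theorem~\ref{Amodulethm3} applied to $(B)$, noting that the depth hypothesis $\depth_{I_{t-1}(\cB)}B \ge 2$ (equivalently $\dim B \ge 2 + \dim R/I_{t-1}(\cB)$, unnecessary here since $\dim B = 1$ forces the submaximal minors to have codimension $\ge c+1$ for $Y$ general by Remark~\ref{dep}) holds in our setting.

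Next, for parts (i)--(iii) the key input is the comparison of the deformation functor ${\rm Def}_{X/\PP^n}$ (equivalently of $\GradAlg(H) = \Hi^H(\PP^n)$ at $(X)$, using \eqref{Grad} which holds because $\ _0\!\Hom_R(I_X,H^1_{\goth m}(A))=0$ is automatic when $\dim A = 1$ and $\depth_{\goth m} A \ge 1$) with the functor of deforming the pair, and then with ${\rm Def}_{B/R}$. Concretely: applying $\Hom_B(-,-)$ to the defining sequence \eqref{Di} for the pair $X \subset Y$ yields exact sequences linking $\ _0\!\Ext^1_A(I_X/I_X^2,A)$, $\ _0\!\Ext^1_B(I_Y/I_Y^2,I_{X/Y})$, $\ _0\!\Ext^1_B(I_{X/Y},I_{X/Y})$, and their images in the corresponding groups with target $B$; the maps $\rho^1$ of \eqref{ro} and $\tau_{X/Y}$ are exactly the two connecting/comparison maps whose kernels measure the obstruction. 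Injectivity of both $\rho^1$ and $\tau_{X/Y}$ forces the tangent-space comparison to be an isomorphism and (via the same spectral-sequence / Fitting's-lemma mechanism as in \eqref{specseq}--\eqref{specseq1} and Theorem~\ref{compthmvar}) transports unobstructedness and ``every deformation comes from deforming $\cA$'' from $Y$ to $X$; since the latter property holds for the general $Y$ of $W(\underline{b};\underline{a'})$ by Proposition~\ref{Dim1Prop}(i) (whose hypotheses are met, the condition on $\tau_{Y/V}$ being exactly the case distinction $c=3$ versus $\ker \tau_{Y/V}=0$ in the hypotheses), Lemma~\ref{unobst} then gives that $\overline{W(\underline{b};\underline{a})}$ is a generically smooth irreducible component, proving (i). For (ii) and (iii) one does not have full injectivity, so instead one computes $\dim_{(X)}\Hi^H(\PP^n)$ via the long exact sequences: under $\ _0\!\Ext^1_B(I_{X/Y},A)=0$ (resp. $\ _0\!\Ext^1_A(I_X/I_X^2,A)=0$) the tangent space is pinned down and the codimension comes out as $\dim \ker \rho^1$ (resp. $\dim \ker \rho^1 + \dim \ker \tau_{X/Y} - \ _0\!\ext^1_B(I_{X/Y},A)$), with unobstructedness following because the obstruction space is then controlled.

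Finally, part (iv) — the statement to be proved — is the purely formal consequence: one always has the inequality $\dim_{(X)}\Hi^H(\PP^n) \le \ _0\!\hom(I_X,A) = \dim$ (tangent space), with equality iff $(X)$ is a smooth point, and the exact sequences of the previous paragraph give $\ _0\!\hom(I_X,A) - \dim W(\underline{b};\underline{a}) \le \dim \ker \rho^1 + \dim \ker \tau_{X/Y}$ (with the extra term $\ _0\!\ext^1_B(I_{X/Y},A)$ contributing on the right, so that under the vanishing hypothesis $\ _0\!\Ext^1_B(I_{X/Y},A)=0$ the bound becomes an \emph{equality} of tangent-space defect with $\dim \ker \rho^1 + \dim \ker \tau_{X/Y}$). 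Combining $\codim \le (\text{tangent defect}) = \dim \ker \rho^1 + \dim \ker \tau_{X/Y}$ gives the unconditional inequality, and the chain of inequalities collapses to equality precisely when $\dim_{(X)}\Hi^H(\PP^n)$ equals the tangent-space dimension, i.e. precisely when $A$ is unobstructed. The main obstacle, as in \cite{K09}, is bookkeeping: setting up the correct commutative ladder of long exact sequences coming from \eqref{Di} and \eqref{defMi} applied with $\Hom$ into $A$, $B$, and $I_{X/Y}$ simultaneously, and checking that the maps $\rho^1$ and $\tau_{X/Y}$ sit in it the way the statement requires — but no genuinely new idea beyond Proposition~\ref{mainN1}(ii) is needed, since that proposition is exactly what removes the old restriction $c-1 \le 4$ and lets the inductive step through.
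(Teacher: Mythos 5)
Your proposal is correct and takes essentially the same route as the paper: the dimension formula via Proposition~\ref{main1} with $\dim W(\underline{b};\underline{a'})$ supplied by Theorem~\ref{Amodulethm3} applied to the curve $Y$, and parts (i)--(iv) by running the flag argument of \cite[Thm.\ 4.19]{K09} (transferring ``every deformation comes from deforming the matrix'' and the tangent/obstruction bookkeeping from $Y$ to $X$ via $\ker\tau_{X/Y}$ and $\ker\rho^1$), with Proposition~\ref{Dim1Prop} replacing the old codimension restriction on $Y$. Only a cosmetic slip: $\dim B=2$ (not $1$) since $\dim Y=1$, which is what makes $\depth_{I_{t-1}(\cB)}B\ge 2$ follow from Remark~\ref{dep}.
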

 \begin{proof} Using Proposition~\ref{main1} we get
   $\dim { W(\underline{b};\underline{a})}= \lambda_c + K_3+...+K_c$ because
   Theorem~\ref{Amodulethm3} applies to
   $(B) \in W(\underline{b};\underline{a'}) $ to find
   $\dim { W(\underline{b};\underline{a'})}$ where
   $\underline{a'} = a_0, a_1,..., a_{t+c-3}$. Then, due to \cite[Prop.\! 4.6
   and Prop.\! 4.13]{K09} the proof of the theorem
   is ``the same'' as the proof of \cite[Thm.\! 4.19]{K09} since we already
   have generalized \cite[Prop.\! 4.15]{K09} to Proposition~\ref{Dim1Prop} of
   this paper to cover determinantal curves $Y$ of arbitrary codimension
   $c\ge 3$. Note also that we have to delete a column of $\cB$ (thus defining
   $V$) when we apply Proposition~\ref{Dim1Prop} to $Y$.
 \end{proof}
 
 \begin{remark} Looking at the proofs we see that we don't need to suppose
   \eqref{maineq} to get (the part of the) conclusions of (i) and (ii) that
   don't involve dimension and codimension formulas. Moreover note the overlap
   in (ii) and (iv) of the theorem.
\end{remark}

Using again the variation \cite[Prop.\! 4.13]{K09} we generalize
\cite[Prop.\! 4.24]{K09} to the following.

\begin{proposition} \label{Dim0Prop} With notations as in the first sentence
  of Theorem~\ref{Dim0Thm}, suppose $c \ge 4$, $\dim X =n-c=0$ and $X$
  general, $a_{i -\min (3,t)}\ge b_{i}$ for $\min (3,t)\le i \le t$ and
  suppose that \eqref{maineq} holds. Then
  $\dim { W(\underline{b};\underline{a})}= \lambda_c + K_3+...+K_c$ (cf.
  Remark \ref{dimWcA}), and the following statements are true:

  {\rm (i)} If $\ _0\! \Ext ^1_{B}(I_{X/Y},A)=0$, $_0\!
  \Ext^1_{B}(I_{Y}/I^2_{Y},I_{X/Y})=0$ and \ $_0\!
  \Ext^1_{B}(I_{Y}/I^2_{Y},B)=0$ then $A$ is unobstructed. Moreover $
  W(\underline{b};\underline{a})$ is contained in a unique generically smooth
  irreducible component of $\Hi ^H(\PP^{c})$ of codimension $\dim \ker
  \rho_{X/Y}^1 + \dim \ker \tau_{Y/V}- \ _0\! \ext ^1_{C}(I_{Y/V},B)$.

   {\rm (ii)} We always have $\codim_{\Hi ^H(\PP^{c})}\overline{
     W(\underline{b};\underline{a})} \le \dim \ker \rho_{X/Y}^1 + \dim \ker
   \tau_{X/Y}+\dim \ker \tau_{Y/V}$.  \\
   Suppose $\ _0\! \Ext ^1_{B}(I_{X/Y},A)=0$ and $\ _0\! \Ext
   ^1_{C}(I_{Y/V},B)=0$. Then we have $$\codim_{\Hi ^H(\PP^{c})}\overline{
     W(\underline{b};\underline{a})} = \dim \ker \rho_{X/Y}^1+ \dim \ker
   \tau_{X/Y}+\dim \ker \tau_{Y/V}$$ if and only if $A$ is unobstructed (e.g.
   $_0\! \Ext^1_{A}(I_{X}/I^2_{X},A)=0$).
 \end{proposition}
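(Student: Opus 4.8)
The plan is to rerun the argument of \cite[Prop.\! 4.24]{K09} with the codimension $c$ now unrestricted, using the stronger statements available here. The first step is the dimension formula. Deleting the last column of $\cA$ gives the one-dimensional determinantal scheme $Y=\Proj(B)$ with $(B)\in W(\underline{b};\underline{a'})$, $\underline{a'}=a_0,\dots,a_{t+c-3}$ and $\dim B=2$, and deleting one further column gives the two-dimensional $V=\Proj(C)$. Since $X$ is general and $a_{i-\min(3,t)}\ge b_i$, Remark~\ref{dep} shows that $I_{t-1}(\cC)$ has the generic, hence $\fm$-primary, codimension, so $\dim R/I_{t-1}(\cC)=0$; therefore Theorem~\ref{Amodulethm3} applies to $B$ and gives $\dim W(\underline{b};\underline{a'})=\lambda_{c-1}+K_3+\dots+K_{c-1}$, while Remark~\ref{dep} also gives $\depth_{I_{t-1}(\cB)}B\ge 2$. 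Feeding this together with the standing hypothesis~\eqref{maineq} into Proposition~\ref{main1} then yields $\dim W(\underline{b};\underline{a})=\lambda_c+K_3+\dots+K_c$ and, in addition, the refined identity
\[
\dim W(\underline{b};\underline{a})=\dim W(\underline{b};\underline{a'})+\dim_k N(a_{t+c-2})_0-1-{}_0\!\hom_R(I_Y,I_{X/Y}),
\]
which is the shape needed for the codimension bookkeeping below.

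For the obstruction-theoretic statements I would use the Hilbert flag scheme $D$ of pairs $X\subset Y$ and its two projections, exactly as in \cite[\S 4]{K09}; the one new ingredient is that the curve step $Y\subset V$ is now handled by Proposition~\ref{Dim1Prop} (which itself rests on Theorems~\ref{Amodulethm3} and~\ref{Amodulethm5} and Proposition~\ref{mainN1}(ii)), so that the codimension $c-1\ge 3$ of $Y$ may be arbitrary and the bound on $c$ of \cite[Prop.\! 4.24]{K09} disappears. For part~(i): the hypothesis ${}_0\!\Ext^1_B(I_{Y}/I^2_{Y},B)=0$ lets Proposition~\ref{Dim1Prop}(ii) apply to the pair $Y\subset V$, so $Y$ is unobstructed and $\codim_{\Hi^{p'}(\PP^c)}\overline{W(\underline{b};\underline{a'})}=\dim\ker\tau_{Y/V}-{}_0\!\ext^1_C(I_{Y/V},B)$. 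The remaining hypotheses ${}_0\!\Ext^1_B(I_{X/Y},A)=0$ and ${}_0\!\Ext^1_B(I_{Y}/I^2_{Y},I_{X/Y})=0$ kill the obstructions of $D$ at $(X\subset Y)$ — this is where \cite[Prop.\! 4.13]{K09}, the variant of \cite[Thm.\! 4.6]{K09} that does not require $\ker\rho^1=0$, enters — so $D$ is smooth there, and the usual comparison via the two projections of $D$ then shows that $A$ is unobstructed, that $(X)$ lies on a unique generically smooth component of $\Hi^H(\PP^c)$, and (using the refined identity above) that $\codim_{\Hi^H(\PP^c)}\overline{W(\underline{b};\underline{a})}=\dim\ker\rho^1_{X/Y}+\dim\ker\tau_{Y/V}-{}_0\!\ext^1_C(I_{Y/V},B)$, in which $\dim\ker\rho^1_{X/Y}$ is the contribution of the step $X\subset Y$ and the remaining two terms give the codimension of $\overline{W(\underline{b};\underline{a'})}$ in $\Hi^{p'}(\PP^c)$.

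For part~(ii): the unconditional inequality $\codim_{\Hi^H(\PP^c)}\overline{W(\underline{b};\underline{a})}\le\dim\ker\rho^1_{X/Y}+\dim\ker\tau_{X/Y}+\dim\ker\tau_{Y/V}$ follows from the same tangent-space estimates along the flag projections for $X\subset Y$ and $Y\subset V$, each of the three kernels bounding from above the drop of dimension at one step of the flag $V\supset Y\supset X$, as in \cite[Thm.\! 4.19]{K09}. Under the additional vanishings ${}_0\!\Ext^1_B(I_{X/Y},A)=0$ and ${}_0\!\Ext^1_C(I_{Y/V},B)=0$ the connecting maps in the relevant five-term sequences become surjective, so all three estimates are equalities precisely when $\dim_{(X)}\Hi^H(\PP^c)={}_0\!\hom(I_X,A)$, i.e.\ precisely when $A$ is unobstructed; and the parenthetical case is subsumed because, by~\eqref{Grad} (which holds here since $\dim A=1$ and $X$ is generically a complete intersection), ${}_0\!\Ext^1_A(I_X/I^2_X,A)$ is an obstruction space of $\Hi^H(\PP^c)$ at $(X)$.

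The step I expect to be the main obstacle is the dimension and codimension bookkeeping: one must carry simultaneously the long and five-term exact sequences attached to the two projections of $D$ and to the $Y\subset V$ projection, together with the module-side comparison of ${\rm Def}_{M/R}$ with ${\rm Def}_{A/R}$, and reconcile the flag-theoretic count with the value $\lambda_c+K_3+\dots+K_c$ of $\dim W(\underline{b};\underline{a})$, reproducing the diagram chases of \cite[Prop.\! 4.24]{K09} now with $c$ unbounded. Along the way one has to check, using Remark~\ref{dep} and the inequality $a_{i-\min(3,t)}\ge b_i$, that the depth and codimension hypotheses of each invoked result — Proposition~\ref{main1}, Proposition~\ref{mainN1}(ii), Theorem~\ref{Amodulethm3}, Proposition~\ref{Dim1Prop} and \cite[Prop.\! 4.13]{K09} — are satisfied all along the flag $V\supset Y\supset X$; granted this, the stated identities come out formally.
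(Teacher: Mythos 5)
Your proposal is correct and follows essentially the same route as the paper: the dimension formula via Theorem~\ref{Amodulethm3} applied to $B$ together with Proposition~\ref{main1}, part (i) via Proposition~\ref{Dim1Prop}(ii) applied to the pair $Y\subset V$ combined with \cite[Prop.\! 4.13]{K09}, and part (ii) via the argument of Proposition~\ref{Dim1Prop}(iii), all run through the flag-scheme machinery of \cite[Prop.\! 4.24]{K09}. The paper's own proof is just a terse reduction to that source, so your extra detail on the flag projections and on verifying the depth hypotheses along $V\supset Y\supset X$ is a faithful expansion rather than a different argument.
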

 \begin{proof} Again we have $\dim { W(\underline{b};\underline{a})}=
   \lambda_c + K_3+...+K_c$ by Proposition~\ref{main1}. Since
   Proposition~\ref{Dim1Prop} applies for {\it every} $c \ge 3$ we can use the
   proof of \cite[Prop.\! 4.24]{K09} to get Proposition~\ref{Dim0Prop}.
   Indeed we use Proposition~\ref{Dim1Prop}(ii) and \cite[Prop.\! 4.13]{K09}
   to get (i) above while we use the proof of
   Proposition~\ref{Dim1Prop}(iii) to get (ii) (see \cite{K09} for details).
 \end{proof}

{\bf Acknowledgement.} I thank prof. R.M. Mir\'o-Roig at Barcelona for interesting discussions on
this topic, prof. A. Iarrobino at Northeastern University, Boston and prof. M.
Boij at KTH, Stockholm for comments and helping me approach the artinian case.
Indeed this study of families of artinian determinantal rings started some time
ago during a visit by A. Iarrobino and M. Boij at Oslo University College
where we concretely studied examples of such families. I also thank the
referee for some valuable comments.

%%%%%%%%%%%%%%%%%%%%%%%%%%%%%%%%%%%%%%%%%%%%%%%%%%%%%%%%%%%%%%%%%%

\end{document}